\newtheorem{theorem}{Theorem}[section]
\newtheorem{proposition}[theorem]{Proposition}
\newtheorem{lemma}[theorem]{Lemma}
\newtheorem{algorithm}[theorem]{Algorithm}
\newtheorem{subroutine}[theorem]{Subroutine}
\newtheorem{defn}[theorem]{Definition}
\newtheorem{question}{Question}
\theoremstyle{definition}
\newtheorem{example}[theorem]{Example}
\newtheorem{remark}[theorem]{Remark}
\newcommand{\R}{\ensuremath{\mathbb{R}}}
\newcommand{\C}{\ensuremath{\mathbb{C}}}
\newcommand{\Z}{\ensuremath{\mathbb{Z}}}
\newcommand{\Q}{\mathbb{Q}}
\newcommand{\mco}{\mathcal{O}}
\newcommand{\pro}{\mathbb{P}}
\newcommand{\mfm}{\mathfrak{m}}
\newcommand{\val}{\text{val}}
\newcommand\be{\begin{equation}}
\newcommand\ee{\end{equation}}
\newcommand\bea{\begin{eqnarray}}
\newcommand\eea{\end{eqnarray}}
\newcommand\bi{\begin{itemize}}
\newcommand\ei{\end{itemize}}
\newcommand\ben{\begin{enumerate}}
\newcommand\een{\end{enumerate}}
\newcommand\ba{\begin{array}}
\newcommand\ea{\end{array}}
\newcommand\bpf{\begin{proof}}
\newcommand\epf{\end{proof}}
\newcommand\bex{\begin{exercise}}
\newcommand\eex{\end{exercise}}
\title{Algorithms for Mumford Curves}
\author{Ralph Morrison and Qingchun Ren}
\begin{document}

\maketitle

\begin{abstract}  Mumford showed that Schottky subgroups of $PGL(2,K)$ give rise to certain curves, now called Mumford curves, over a non-Archimedean field K. Such curves are foundational to subjects dealing with non-Archimedean varieties, including Berkovich theory and tropical geometry. We develop and implement numerical algorithms for Mumford curves over the field of $p$-adic numbers. A crucial and difficult step is finding a good set of generators for a Schottky group, a problem solved in this paper. This result allows us to design and implement algorithms for tasks such as: approximating the period matrices of the Jacobians of Mumford curves; computing the Berkovich skeleta of their analytifications; and approximating points in canonical embeddings. We also discuss specific methods and future work for hyperelliptic Mumford curves.
\end{abstract}

\section{Introduction}

Curves over non-Archimedean fields are of fundamental importance to algebraic geometry and number theory.   Mumford curves are a family of such curves, and are interesting from both a theoretical and computational perspective.  In non-Archimedean geometry, they are quotients of projective space by Schottky groups. In tropical geometry, which looks at the images in $\R^n$ of curves under coordinate-wise valuation, these are balanced graphs with the maximal number of cycles.  For instance, the tropicalization of an elliptic Mumford curve can be realized as a plane cubic in honeycomb form \cite{CS}. 

Let $K$ be an algebraically closed field complete with respect to a nontrivial non-Archimedean valuation. Unless otherwise stated, $|\cdot|$ will denote a choice of norm on $K$ coming from this valuation. 
Let $R=\{x\in{}K|\mathrm{val}(x)\geq{}0\}$ be the valuation ring of $K$. This is a local ring with unique maximal ideal $M=\{x\in{}K|\mathrm{val}(x)>0\}$. Let $k=R/M$ denote the residue field of $K$. 
We are most interested in the field of $p$-adic numbers  $\mathbb{Q}_p$, which unfortunately is not algebraically closed.   
(For this case, $R=\Z_p$, the ring of $p$-adic integers, and $k=\mathbb{F}_p$, the field with $p$ elements.) 
Therefore for theoretical purposes we will often consider $K=\mathbb{C}_p$, the complete algebraic closure of $\mathbb{Q}_p$. (In this case $R$ is much larger, and $k$ is the algebraic closure of $\mathbb{F}_p$.) In most of this paper, choosing elements of $\mathbb{C}_p$ that happen to be elements of $\mathbb{Q}_p$ as inputs for algorithms yields an output once again in $\mathbb{Q}_p$.  This ``$\mathbb{Q}_p$ in, $\mathbb{Q}_p$ out'' property means we may take $K$ to be $\mathbb{Q}_p$ for our algorithmic purposes, while still considering $K=\mathbb{C}_p$ when more convenient for the purposes of theory. 
Much of the theory presented here works for other non-Archimedean fields, such as the field of Puiseux series $\C\{\!\{t\}\!\}$.

We recall some standard definitions and notation for $p$-adic numbers; for further background on the $p$-adics, see \cite{Ho}.  For a prime $p$, the $p$-adic valuation $\val_p:\Q^*\rightarrow \Z$ is defined by $\val_p\left(p^{v}\frac{m}{n}\right)=v$, where $m$ and $n$ are not divisible by $p$.  The usual $p$-adic norm $|\cdot|_p$ on $\Q$ is defined for $a\in\Q^*$ by $|a|_p=\frac{1}{p^{\val_p(a)}}$ and for $0$ by $|0|_p=0$. This means that large powers of $p$ are small in absolute value, and small powers of $p$ are large in absolute value. We will usually omit the subscript $p$ from both $|\cdot|_p$ and $\val_p$.  

The completion of $\mathbb{Q}$ with respect to the $p$-adic norm is denoted $\mathbb{Q}_p$, and is called the field of $p$-adic numbers.  Each nonzero  element $b$ of $\mathbb{Q}_p$ can be written uniquely as
$$b=\sum_{n=v}^\infty a_np^n, $$
where $v\in \Z$,  $a_v\neq 0$ and $a_n\in\{0,1,\ldots,p-1\}$ for all $n$. The $p$-adic valuation and norm extend to this field, and such a sum will have $\val(b)=v$ and $|b|=\frac{1}{p^v}$.  In analog to decimal expansions, we will sometimes write
$$b=\ldots a_Na_{N-1}\ldots a_3a_2a_1a_0.a_{-1}a_{-2}\ldots a_{v}, $$
where the expression trails to the left since higher powers of $p$ are smaller in $p$-adic absolute value. We may approximate $b\in \mathbb{Q}_p$ by a finite sum
$$b\approx\sum_{n=v}^N a_np^n, $$
which will give an error of size at most $\frac{1}{p^{N+1}}$.    

Consider the group $PGL(2,K)$, which acts on $\pro^1(K)$ by treating elements as column vectors.  That is, a matrix acts on the point $(a:b)\in \pro^1(K)$ by acting on the vector $\left(\begin{smallmatrix}a\\b\end{smallmatrix}\right)$ on the left. Viewed on an affine patch, the elements of this group act as fractional linear transformations. We are interested in the action of certain subgroups of  $PGL(2,K)$ called \emph{Schottky groups}, because a Schottky group minimally generated by $g\geq 2$ elements will give rise to a curve of genus $g$.

\begin{defn}{ A $2\times{}2$ matrix is \emph{hyperbolic} if it has two eigenvalues with different valuations.  A \emph{Schottky group} $\Gamma\leq PGL(2,K)$ is a finitely generated subgroup such that every non-identity element is hyperbolic. 
}
\end{defn}

There are many equivalent definitions of Schottky groups, including the following useful characterization.

\begin{proposition} { A subgroup of $PGL(2,K)$ is Schottky if and only if it is free, discrete, and finitely generated.  If the generators are elements of $\mathbb{Q}_p^{2\times2}$, we may replace ``free'' with ``torsion free.''}
\end{proposition}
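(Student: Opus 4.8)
The plan is to work with the action of $PGL(2,K)$ on its Bruhat--Tits tree $T$, whose vertices are the homothety classes of rank-two $R$-lattices in $K^2$ and whose vertex stabilizers are the conjugates of the compact open subgroup $PGL(2,R)$. The dictionary I would use is standard: a non-identity $\gamma\in PGL(2,K)$ is hyperbolic exactly when it acts on $T$ as a translation of positive length $|\mathrm{val}(\lambda_1)-\mathrm{val}(\lambda_2)|$ along an axis (the geodesic joining its fixed points in $\pro^1(K)$); otherwise the two eigenvalues have equal valuation, that geodesic is fixed pointwise, and $\gamma$ is elliptic, meaning it has bounded orbits on $T$.

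For the forward direction, finite generation is built into the definition. If every non-identity element of $\Gamma$ is hyperbolic, then no non-identity element fixes a vertex of $T$ or inverts an edge: a fixed vertex would make $\gamma$ elliptic, and an inverted edge would make the (still hyperbolic) element $\gamma^2$ fix two vertices. So $\Gamma$ acts freely and without inversions on a tree, hence is a free group by Bass--Serre theory. Freeness of the action also makes every vertex stabilizer in $\Gamma$ trivial, so $\Gamma$ meets every conjugate of $PGL(2,R)$ in the identity; since every compact subset of $PGL(2,K)$ is covered by finitely many cosets of $PGL(2,R)$, it follows that $\Gamma$ meets each compact set in a finite set and is closed, so $\Gamma$ is discrete.

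For the converse I would argue by contradiction: suppose $\gamma\in\Gamma$, $\gamma\neq 1$, is not hyperbolic, and show $\gamma$ must have finite order --- which contradicts freeness, and indeed contradicts mere torsion-freeness, so this step is all that needs the ``torsion free'' refinement. Not being hyperbolic, $\gamma$ has eigenvalues of equal valuation over $\bar K$; rescaling the matrix gives unit eigenvalues, and then (treating the diagonalizable and the unipotent-times-scalar cases separately) $\gamma$ fixes a point of $T$, i.e. is elliptic. When the residue field is finite --- in particular for $K=\mathbb{Q}_p$, where $T$ is locally finite --- an elliptic element has a finite orbit on $T$, so $\langle\gamma\rangle$ maps to a finite symmetric group with kernel inside the compact open pointwise stabilizer of a ball; hence $\overline{\langle\gamma\rangle}$ is a compact subgroup of $PGL(2,K)$. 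If $\gamma$ had infinite order this closure would be infinite and compact, hence not discrete, contradicting discreteness of $\Gamma$. Therefore $\gamma$ has finite order, the contradiction is reached, and every non-identity element of $\Gamma$ is hyperbolic, i.e. $\Gamma$ is Schottky. (For a general algebraically closed complete $K$ the tree need not be locally finite; I would reduce to a discretely valued subfield carrying the matrix entries, or cite the corresponding results of Gerritzen--van der Put.)

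The ``torsion free'' clause is then immediate: over $\mathbb{Q}_p$ the second implication only used freeness to forbid torsion, so torsion-freeness is enough, and conversely Schottky implies free implies torsion free. The step I expect to be the main obstacle is precisely this second implication --- handling non-diagonalizable and ramified elements in ``not hyperbolic $\Rightarrow$ elliptic,'' and then establishing ``elliptic of infinite order $\Rightarrow$ not discrete.'' The latter is exactly where compactness of $PGL(2,\mathbb{Z}_p)$ (equivalently, profiniteness of $\mathbb{Z}_p^{\ast}$) is used, and it really does fail when the residue field has characteristic $0$: for instance $\langle\mathrm{diag}(2,1)\rangle\leq PGL(2,\mathbb{C}\{\!\{t\}\!\})$ is discrete, torsion free and finitely generated but not Schottky. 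This is why the hypothesis singles out $\mathbb{Q}_p$ for the torsion-free version.
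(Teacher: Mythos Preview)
The paper does not actually prove this proposition; it is stated without proof or explicit citation as a known characterization (implicitly deferring to Gerritzen--van der Put). So there is no ``paper's own proof'' to compare against, and your write-up in fact supplies what the paper omits.

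Your approach via the action on the Bruhat--Tits tree is the standard one and is essentially correct. The forward direction is clean: hyperbolic elements act as translations, so $\Gamma$ acts freely without inversions on $T$ and Bass--Serre gives freeness; your discreteness argument via $\Gamma\cap gPGL(2,R)$ having at most one element is fine over a locally compact $K$. For the converse, reducing ``not hyperbolic'' to ``elliptic'' and then using that a discrete subgroup of the compact stabilizer $PGL(2,\mathbb{Z}_p)$ is finite is exactly right, and it isolates precisely why torsion-freeness suffices over $\mathbb{Q}_p$. Your counterexample $\langle\mathrm{diag}(2,1)\rangle$ in the Puiseux-series setting is a good illustration of where the residue-characteristic-zero case breaks.

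One point to tighten: your discreteness argument in the forward direction, as written, uses compactness of $PGL(2,R)$, which fails for $K=\mathbb{C}_p$. You flag this and propose reducing to a discretely valued subfield containing the matrix entries or citing \cite{GP}; that is the right move, but it applies to \emph{both} directions, not just the converse. In practice the cleanest route for general algebraically closed $K$ is exactly the reduction you name: a finitely generated $\Gamma$ lives in $PGL(2,K_0)$ for some complete discretely valued $K_0\subset K$ with finite residue field, and then your locally-compact arguments go through verbatim.
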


 Let $\Gamma$  be a Schottky group minimally generated by $\gamma_1,\ldots,\gamma_g$.  The above proposition implies that each element $\gamma{}\in \Gamma$ can be written as a unique shortest product $h_1h_2\dotsb{}h_k$, where each $h_i\in{}\{\gamma{}_1,\dotsc{},\gamma{}_g,\gamma{}_1^{-1},\dotsc{},\gamma{}_g^{-1}\}$. This product is called the \emph{reduced word} for $\gamma$. 

  Let $\Sigma$ be the set of points in $\pro^1(K)$ that are fixed points of elements of $\Gamma$ or limit points of the fixed points.  The group $\Gamma$ acts nicely on $\Omega:=\pro^1(K)\setminus \Sigma$; for this reason we will sometimes refer to $\Sigma$ as \emph{the set of bad points} for $\Gamma$.

\begin{theorem}[Mumford, \cite{Mu1}]{Let $\Gamma=\left<\gamma_1,\ldots,\gamma_g\right>$ and $\Omega$ be as above. Then $\Omega/\Gamma$ is analytically isomorphic to a curve of genus $g$.  We call such a curve a \emph{Mumford curve}.
}
\end{theorem}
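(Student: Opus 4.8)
The plan is to realize the quotient $X:=\Omega/\Gamma$ as a \emph{proper} rigid-analytic curve over $K$ by gluing up a finite affinoid fundamental domain, to algebraize it via rigid GAGA, and then to read off the genus from the combinatorics of the gluing. First I would fix a \emph{good} system of generators for $\Gamma$ together with its associated Schottky figure: pairwise disjoint closed balls $B_1^{+},B_1^{-},\dots,B_g^{+},B_g^{-}\subset\pro^1(K)$ such that each $\gamma_i$ carries $\pro^1(K)\setminus B_i^{-}$ isomorphically onto $B_i^{+}$. That such a configuration exists is classical, following from discreteness and freeness via the action of $\Gamma$ on the Bruhat--Tits tree of $PGL(2,K)$; producing it effectively is the delicate point taken up in the body of this paper, so here one may simply invoke it. Set $F:=\pro^1(K)\setminus\bigcup_{i=1}^{g}(\mathrm{int}\,B_i^{+}\cup\mathrm{int}\,B_i^{-})$, a closed ball with $2g$ open sub-balls deleted, hence an affinoid domain. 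A ping-pong argument then shows that the translates $\gamma F$ (for $\gamma\in\Gamma$) cover $\Omega$ with pairwise disjoint interiors, that the bad set $\Sigma$ lies inside the deleted balls, and that $\Gamma$ acts freely and properly discontinuously on $\Omega$; consequently $X$ is obtained from $F$ by gluing its $2g$ boundary circles $\partial B_i^{\pm}$ in pairs through the maps $\gamma_i$.

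This gluing data endows $X$ with the structure of a one-dimensional rigid-analytic space over $K$: cover it by the image of $\mathrm{int}\,F$ together with $2g$ small annuli straddling the glued circles, all affinoid, with admissible overlaps. Since $\Omega\subseteq\pro^1(K)$ is smooth and $\Gamma$ acts without fixed points, $X$ is smooth; since $F$ is quasi-compact and only finitely many pieces are glued, $X$ is quasi-compact, separated, and boundaryless, hence proper over $K$. By rigid-analytic GAGA (Kiehl, K\"opf), a smooth proper rigid-analytic curve over $K$ is the analytification of a unique smooth projective algebraic curve; this produces the curve with $X^{\mathrm{an}}=\Omega/\Gamma$. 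Equivalently --- and this variant also yields the genus --- one can assemble from the balls $B_i^{\pm}$ a formal $R$-model of $X$ whose special fibre is a totally degenerate semistable curve, namely a union of copies of $\pro^1_k$ whose dual graph is homotopy equivalent to a wedge of $g$ circles, and transport the arithmetic genus of the special fibre to $X$ by flatness.

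It remains to verify that the genus equals $g$. Combinatorially, the Berkovich skeleton of $X^{\mathrm{an}}$ is the finite metric graph obtained by quotienting the convex hull of $\Sigma$ inside the Bruhat--Tits tree by $\Gamma$; since $\Gamma$ is free of rank $g$ this graph has first Betti number $g$, and a totally degenerate curve has genus equal to the first Betti number of its skeleton. Analytically, one can instead exhibit $g$ linearly independent $\Gamma$-invariant holomorphic differentials on $\Omega$, obtained as logarithmic derivatives of the convergent Schottky products $\prod_{\gamma\in\Gamma/\langle\gamma_i\rangle}\frac{z-\gamma a_i}{z-\gamma b_i}$, where $a_i$ and $b_i$ are the attracting and repelling fixed points of $\gamma_i$; convergence and $\Gamma$-invariance follow from the geometry of the Schottky figure, the forms descend to $X$, and a dimension count identifies their span with the space of global holomorphic $1$-forms on $X$, whose dimension is therefore the genus~$g$.

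The main obstacle is the combination of the first two steps: obtaining a \emph{nice} --- affinoid, finitely glued, boundaryless --- fundamental domain (the Schottky figure) and deducing from it both the properly discontinuous action and the convergence estimates used for the automorphic forms above. Once that is in place everything downstream is essentially formal, and making this step effective is the central contribution of the present paper.
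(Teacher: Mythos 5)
The paper states Mumford's theorem purely as a citation to \cite{Mu1} and supplies no proof of its own, so there is no internal argument to compare against. Your proposal is a correct sketch of the standard proof, along the lines of Mumford's original paper and the treatment in Gerritzen and van der Put \cite{GP}. It is worth noting that the ingredients you invoke --- the Schottky figure and affinoid fundamental domain (the paper's good fundamental domain, Definition \ref{definition:good_fundamental_domain} and Theorem \ref{statement:good_fundamental_domain}), the skeleton with first Betti number $g$ (the object computed in Algorithm \ref{algorithm:abstract_tropical_curve}), and the $\Gamma$-invariant differentials built from theta products (the $u_{\gamma_i}$ of Section \ref{section:main_algorithms}) --- are exactly the objects this paper develops; it simply uses them to make Mumford's theorem computationally effective rather than to reprove it. Two small remarks: your balls $B_i^{\pm}$ play the role of the paper's $B_i,B_i'$, and the open-versus-closed distinction built into Definition \ref{definition:good_fundamental_domain} ($\gamma_i(\pro^1\setminus B_i')=B_i^+$) is the thing to keep straight in the ping-pong step; and your automorphic form $\prod_{\gamma\in\Gamma/\langle\gamma_i\rangle}(z-\gamma a_i)/(z-\gamma b_i)$, with $a_i,b_i$ the fixed points of $\gamma_i$, agrees after telescoping over $\langle\gamma_i\rangle$-cosets with the paper's $u_{\gamma_i}(z)=\Theta(a,\gamma_i a;z)$, which uses a base point $a\in\Omega$ rather than the fixed points in $\Sigma$ and is the form actually amenable to the convergence estimates of Theorem \ref{theorem:approximation_theorem}.
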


In a companion paper to \cite{Mu1} (see \cite{Mu2}), Mumford also considered abelian varieties over non-Archimedean fields.  He showed that these could be represented as $(K^*)^g/Q$, where $Q\in (K^*)^{g\times g}$ is called a \emph{period matrix} for the abelian variety, and represents the multiplicative subgroup generated by its columns.

Since their initial appearance in the 1970s,  a rich theory behind Mumford curves has been developed, largely in the 1980s in such works as \cite{GP}.   However, prior to the work in this paper there have been few numerical algorithms for working with them (an exception being a treatment of hyperelliptic Mumford curves, mostly genus $2$, in  \cite{Ka} from 2007).  We have designed and implemented algorithms that accomplish Mumford curve-based tasks over $\mathbb{Q}_p$ previously absent from the realm of computation, and have made many seemingly theoretical and opaque objects hands-on and tractable.

After  discussing in Section \ref{section:fundamental_domains} a technical hypothesis (``good position'') for the input for our algorithms, we present our main algorithms in Section \ref{section:main_algorithms}. They accomplish the following tasks, where we denote $\Omega/\Gamma$ by $C$:

\bi

\item Given a Schottky group $\Gamma$, find a period matrix $Q$ for the abelian variety $\text{Jac}(C )$  (Algorithm \ref{algperiodmatrix}).

\item Given a Schottky group $\Gamma$, find a triple $(G,\ell,h)$, where 
\bi \item $G$ is a graph,
\item $\ell$ is a length function on $G$ such that the metric graph $(G,\ell)$ is  the abstract tropical curve  which is a skeleton of $C^{an}$ (the analytification of $C$), and
\item $h$ is a  natural equivalence $h:R^g\rightarrow G$ from the rose graph on $g$ petals; 
\ei
this data specifies a point in the tropical Teichm\"{u}ller space described in \cite{CMV} (Algorithm \ref{algorithm:abstract_tropical_curve}).

\item  Given a Schottky group $\Gamma$, find points in a canonical embedding of the curve $C$ into $\pro^{g-1}$ (Algorithm \ref{algcanonical}).

\ei
In Section \ref{goodpositionalgorithmsection}, we present an algorithm to achieve the ``good position'' hypothesis that allows the other algorithms to run efficiently, which in doing so verifies that the input group is Schottky (or proves that the group is not Schottky).  This is the most important result of this paper, as the algorithms in Section \ref{section:main_algorithms} rely heavily upon it.    

We take advantage of a property that makes non-Archimedean valued fields like $\mathbb{Q}_p$ special: $|x+y|\leq{}\max\{|x|,|y|\}$. As a result, the error does not accumulate in the computation. Thus we avoid a dangerous hazard present in doing numerical computation over  $\mathbb{R}$ or $\mathbb{C}$. The computational problems are hard in nature. Efficient computation for similar problems is not common in the literature even for genus $2$ case. Our algorithms are capable of solving genus $2$ and some genus $3$ examples on a laptop in reasonable time (several minutes). However, they are less efficient for larger cases. The reason is that the running time grows exponentially as the requirement on the precision of the output (in terms of the number of digits) grows. One of the future goals is to find a way to reduce the running time for the algorithms.

Other future goals for Mumford curve algorithms (detailed in Section \ref{futurequestionssection}) include natural reversals of the algorithms in Section \ref{section:main_algorithms}.   We are also interested in a particular family of Schottky groups called \emph{Whittaker groups}, defined in Subsection \ref{sectionwhittaker}.   These are the Schottky groups that give rise to hyperelliptic Mumford curves.  Some computations for genus $2$ curves arising from Whittaker groups were done in \cite{Ka}, including computation of Jacobians and finding group representations from ramification points.   Two desirable algorithms in this area include:

\bi

\item  Given a Whittaker group $W$, find an affine equation for $\Omega/W$.

\item  Given a totally split hyperelliptic curve $C$, find a Whittaker group $W$ such that $C\cong \Omega/W$.

\ei
The first can be accomplished if a particular presentation of $W$ is available, and a brute force algorithm in \cite{Ka} can compute the second if the ramification points of $C$ are in a nice position.  Future work removing these requirements and improving efficiency would make hyperelliptic Mumford curves very easy to work with computationally.

\subsection*{Acknowledgements}
We thank our advisor Bernd Sturmfels for guiding us through this project. We also thank Matthew Baker, Melody Chan, Diane Maclagan and Thomas Scanlon for helpful discussions and communications. Both authors were supported by the National Science Foundation through grant DMS-0968882. Ralph Morrison was also supported in part by UC Berkeley, and in part by the Max Planck Institute for Mathematics in Bonn.

\subsection*{Supplementary Material}
We made extensive use of the software package {\tt sage} \cite{Sage}.  Our supplementary files can be found at \url{http://math.berkeley.edu/~ralph42/mumford_curves_supp.html}.  We have also included the files in the arXiv submission of this paper, and they can be obtained by downloading the source. There are minor changes in the {\tt sage} implementation from the description of the algorithms in this paper. The changes are made only for convenience in implementation, and they do not affect the behavior of the algorithms.

\section{Good fundamental domains in $\mathbb{P}^1$ and $(\mathbb{P}^1)^{an}$}\label{section:fundamental_domains}

This section introduces {\it good fundamental domains} and the notion of {\it good position} for generators, both of which will play key roles in our algorithms for Mumford curves. Our main algorithms in Section \ref{section:main_algorithms} require as input Schottky generators in good position, without which the rate of convergence of approximations will drop drastically. For our method of putting generators into good position, see Section \ref{goodpositionalgorithmsection}.

  We start with the usual projective line $\mathbb{P}^1$, then discuss the analytic projective line $(\mathbb{P}^1)^{an}$. Our treatment of good fundamental domains follows Gerritzen and van der Put \cite{GP}. The notion is also discussed by Kadziela \cite{Ka}. The introduction to the analytic projective line follows Baker, Payne and Rabinoff \cite{BPR}.

\begin{defn}
An \emph{open ball} in $\mathbb{P}^1$ is either a usual open ball $B(a,r)=\{x\in{}K:|x-a|<r\}$ or the complement of a usual closed ball $\mathbb{P}^1\backslash{}B(a,r)^+=\{\infty{}\}\cup{}\{x\in{}K:|x-a|>r\}$. A \emph{closed ball} is either a usual closed ball or the complement of a usual open ball.
\end{defn}

The open balls generate a topology on $\mathbb{P}^1$. Both open balls and closed balls are simultaneously open and closed in this topology, as is the case for any non-Archimedean field due to the ultrametric inequality $|x+y|\leq\max\{|x|,|y|\}$.  Let $|K^{\times}|$ denote the image of $K^{\times}$ under $|\cdot|$. If $r\in{}|K^{\times{}}|$, the open ball and the closed ball are distinguished by whether there exist two points $x,y$ in the ball such that $|x-y|$ equals the diameter. The complement of an open ball is a closed ball, and vice versa.

\begin{defn}\label{definition:good_fundamental_domain}
A \emph{good fundamental domain} $F\subset \pro^1$ corresponding to the generators $\gamma{}_1,\dotsc{},\gamma{}_g$ is the complement of $2g$ open balls $B_1,\dotsc{},B_g,B_1',\dotsc{},B_g'$, such that corresponding closed balls $B_1^+,\dotsc{},B_g^+,B_1'^+,\dotsc{},B_g'^+$ are disjoint, and that $\gamma{}_i(\mathbb{P}^1\backslash{}B_i')=B_i^+$ and $\gamma{}_i^{-1}(\mathbb{P}^1\backslash{}B_i)=B_i'^+$ for all $i$. The \emph{interior} of $F$ is $F^{\circ{}}=\mathbb{P}^1\backslash{}(B_1^+\cup{}\dotsb{}\cup{}B_g^+\cup{}B_1'^+\cup{}\dotsb{}\cup{}B_g'^+)$. The \emph{boundary} of $F$ is $F\backslash{}F^{\circ{}}$.
\end{defn}

The definition above implies that $\gamma{}_i(\mathbb{P}^1\backslash{}B_i'^+)=B_i$ and $\gamma{}_i^{-1}(\mathbb{P}^1\backslash{}B_i^+)=B_i'$ for all~$i$.

\begin{example}\label{example:good_fundamental_domain}
(1) Let $K=\mathbb{C}_3$ and $\Gamma{}$ be the group generated by
\begin{equation*}
\gamma{}_1 = \begin{bmatrix} -5 & 32 \\ -8 & 35 \end{bmatrix}, \gamma{}_2 = \begin{bmatrix} -13 & 80 \\ -8 & 43 \end{bmatrix}
\end{equation*}
Both matrices have eigenvalues $27$ and $3$. The matrix $\gamma{}_1$ has left eigenvectors 
$\left(\begin{smallmatrix}1\\1\end{smallmatrix}\right)$ and
 $\left(\begin{smallmatrix}4\\1\end{smallmatrix}\right)$, 
 and $\gamma{}_2$ has left eigenvectors $\left(\begin{smallmatrix}2\\1\end{smallmatrix}\right)$ and $\left(\begin{smallmatrix}5\\1\end{smallmatrix}\right)$. 
 We use the convention that $(z_1:z_2)=z_1/z_2$. Then, $F=\mathbb{P}^1\backslash{}(B_1\cup{}B_1'\cup{}B_2\cup{}B_2')$ where $B_1=B(4,1/9)$, $B_1'=B(1,1/9)$, $B_2=B(5,1/9)$, $B_2'=B(2,1/9)$ is a good fundamental domain relative to the generators $\gamma{}_1$ and $\gamma{}_2$. One can verify as follows. First rewrite
\begin{equation*}
\gamma{}_1z=\frac{-5z+32}{-8z+35}=4+\frac{27(z-1)-81}{-8(z-1)+27}.
\end{equation*}
Suppose that $z\in{}B_1'=B(1,1/9)$. Then, $\mathrm{val}(27(z-1))=3+\mathrm{val}(z-1)\geq{}3+2=5$, and $\mathrm{val}(81)=4$. So $\mathrm{val}(27(z-1)-81)=4$. Also, $\mathrm{val}(-8(z-1)+27)\geq{}\min{}(\mathrm{val}(8(z-1)),\mathrm{val}(27))>\min{}(2,3)=2$. So,
\begin{equation*}
|\gamma{}_1z-4|=\left|\frac{27(z-1)-81}{-8(z-1)+27}\right|>\frac{3^{-4}}{3^{-2}}=1/9.
\end{equation*}
So $\gamma{}_1(B_1')\subset{}\mathbb{P}^1\backslash{}B_1^+$. The other three conditions can be verified similarly.

(2) Let $K=\mathbb{C}_3$ and $\Gamma{}$ be the group generated by
\begin{equation*}
\gamma{}_1 = \begin{bmatrix} -79 & 160 \\ -80 & 161 \end{bmatrix}, \gamma{}_2 = \begin{bmatrix} -319 & 1600 \\ -80 & 401 \end{bmatrix}
\end{equation*}
Both matrices have eigenvalues $81$ and $1$. The matrix $\gamma{}_1$ has left eigenvectors $\left(\begin{smallmatrix}1\\1\end{smallmatrix}\right)$ and $\left(\begin{smallmatrix}2\\1\end{smallmatrix}\right)$, and the matrix $\gamma{}_2$ has left eigenvectors $\left(\begin{smallmatrix}4\\1\end{smallmatrix}\right)$ and $\left(\begin{smallmatrix}5\\1\end{smallmatrix}\right)$. Then, $F=\mathbb{P}^1\backslash{}(B_1\cup{}B_1'\cup{}B_2\cup{}B_2')$ where $B_1=B(2,1/9)$, $B_1'=B(1,1/9)$, $B_2=B(5,1/9)$, $B_2'=B(4,1/9)$ is a good fundamental domain relative to the generators $\gamma{}_1$ and $\gamma{}_2$.

(3) Let $K=\mathbb{C}_3$, and let $\Gamma{}$ be the group generated by
\begin{equation*}
\gamma{}_1 =
\begin{bmatrix} 121 & -120 \\ 40 & -39 \end{bmatrix},
\gamma{}_2 = 
\begin{bmatrix} 121 & -240 \\ 20  & -39 \end{bmatrix},
\gamma{}_3 =
\begin{bmatrix} 401 & -1600 \\ 80 & -319 \end{bmatrix}.
\end{equation*}
All three generators have eigenvalues $1$ and $3^4$. The element $\gamma{}_1$ has eigenvectors $\left(\begin{smallmatrix}1\\1\end{smallmatrix}\right)$ and $\left(\begin{smallmatrix}3\\1\end{smallmatrix}\right)$. The element $\gamma{}_2$ has eigenvectors $\left(\begin{smallmatrix}2\\1\end{smallmatrix}\right)$ and $\left(\begin{smallmatrix}6\\1\end{smallmatrix}\right)$. The element $\gamma{}_3$ has eigenvectors $\left(\begin{smallmatrix}4\\1\end{smallmatrix}\right)$ and $\left(\begin{smallmatrix}5\\1\end{smallmatrix}\right)$. Then, $F=\mathbb{P}^1\backslash{}(B_1\cup{}B_1'\cup{}B_2\cup{}B_2'\cup{}B_3\cup{}B_3')$ where $B_1 = B(1,1/9)$, $ B_1' = B(3,1/9)$, $B_2 = B(2,1/9)$,
$B_2' = B(6,1/9)$,  
$B_3 = B(4,1/9)$, $B_3' = B(5,1/9)$ is a good fundamental domain relative to the generators $\gamma{}_1$, $\gamma{}_2$ and $\gamma{}_3$.
\end{example}

The following lemma follows from Definition \ref{definition:good_fundamental_domain} by induction (see \cite[Theorem 6.2]{Ka}).

\begin{lemma}\label{statement:location_of_gamma_b}
Let $F$ and $\gamma_1,\ldots,\gamma_g$ be as in Definition \ref{definition:good_fundamental_domain}, and let $\gamma\in\Gamma\setminus\{\left(\begin{smallmatrix}1&0\\0&1\end{smallmatrix}\right)\}$ and $b\in \pro^1(K)$.  Write the reduced word for $\gamma{}$ as $h_1h_2\dotsb{}h_k$, where $k\geq{}1$ and $h_i{}\in{}\{\gamma_1^{\pm},{}\ldots,{}\gamma_g^{\pm}\}$ for all $i$.  Assume that $b\notin{}B_j'$ if $h_k=\gamma{}_j$ and $b\notin{}B_j$ if $h_k=\gamma{}_j^{-1}$. Then we have
\begin{equation*}
\gamma{}b\in{}
\begin{cases}
B_i^+, \text{if $h_1=\gamma{}_i$}, \\
B_i'^+, \text{if $h_1=\gamma{}_i^{-1}$}.
\end{cases}
\end{equation*}
\end{lemma}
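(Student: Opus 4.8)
The plan is to prove the statement by induction on the length $k$ of the reduced word $h_1 h_2 \dotsb h_k$, using Definition \ref{definition:good_fundamental_domain} as the base case and the defining properties of the good fundamental domain for the inductive step.

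First I would establish the base case $k=1$, so that $\gamma = h_1$ is a single generator or inverse of a generator. If $h_1 = \gamma_i$, then since $h_k = h_1 = \gamma_i$, the hypothesis gives $b \notin B_i'$, i.e.\ $b \in \pro^1 \setminus B_i'$. By Definition \ref{definition:good_fundamental_domain}, $\gamma_i(\pro^1 \setminus B_i') = B_i^+$, so $\gamma b \in B_i^+$, as desired. The case $h_1 = \gamma_i^{-1}$ is symmetric, using $\gamma_i^{-1}(\pro^1 \setminus B_i) = B_i'^+$.

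For the inductive step, suppose $k \geq 2$ and the statement holds for all reduced words of length $k-1$. Write $\gamma = h_1 (h_2 \dotsb h_k)$ and set $\gamma' = h_2 \dotsb h_k$, which is a reduced word of length $k-1$ (the product $h_2 \dotsb h_k$ is still reduced since the original word is). The last letter of $\gamma'$ is still $h_k$, so the hypothesis on $b$ is exactly the hypothesis needed to apply the inductive hypothesis to $\gamma'$: we conclude $\gamma' b \in B_j^+$ if $h_2 = \gamma_j$, and $\gamma' b \in B_j'^+$ if $h_2 = \gamma_j^{-1}$. Now I want to apply the base-case reasoning to $h_1$ acting on the point $b' := \gamma' b$. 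The key point is that reducedness forces $h_1 \neq h_2^{-1}$: if $h_1 = \gamma_i$ then $h_2 \neq \gamma_i^{-1}$, so $b' \in B_j^+$ or $B_j'^+$ with $(j, \text{sign}) \neq (i, -)$; since the closed balls $B_1^+, \dots, B_g^+, B_1'^+, \dots, B_g'^+$ are pairwise disjoint, $b' \notin B_i'$, and hence $h_1 b' = \gamma_i b' \in \gamma_i(\pro^1 \setminus B_i') = B_i^+$. Similarly if $h_1 = \gamma_i^{-1}$, reducedness gives $h_2 \neq \gamma_i$, so disjointness yields $b' \notin B_i$, whence $\gamma b = \gamma_i^{-1} b' \in B_i'^+$. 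This completes the induction.

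The main obstacle — really the only subtle point — is making sure the hypothesis on $b$ is correctly threaded through the induction and that reducedness is invoked at the right moment: one must check that $h_2 \dotsb h_k$ is genuinely a reduced word (so the inductive hypothesis applies) and that the non-cancellation condition $h_1 \neq h_2^{-1}$ combined with the disjointness of the $2g$ closed balls is exactly what guarantees the point $\gamma' b$ avoids the ball that would obstruct applying $h_1$. Everything else is a direct unwinding of Definition \ref{definition:good_fundamental_domain}; no estimates beyond those already packaged in the definition are needed, which is why the lemma is cited as following ``by induction.''
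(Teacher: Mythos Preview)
Your proof is correct and follows essentially the same approach as the paper's. The paper presents the argument as an explicit step-by-step iteration from $h_k b$ up to $h_1 h_2 \cdots h_k b$ (first in the simplified case $h_i \in \{\gamma_1,\dots,\gamma_g\}$, then handling inverses), while you package the same iteration as a clean induction on $k$ peeling off $h_1$; the two key ingredients---the defining property of the good fundamental domain and the combination of reducedness with disjointness of the $2g$ closed balls---are used identically in both.
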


\bpf  To simplify notation we'll outline the proof for the case where $h_i\in\{\gamma_1,\ldots,\gamma_g\}$ for all $i$, and then describe how to generalize to the case of $h_i\in\{\gamma_1^{\pm},\ldots,\gamma_g^{\pm}\}$.

Write $h_i=\gamma_{a_i}$ for each $i$.  Since $h_k=\gamma_{a_k}$, we know by assumption that $b\notin B'_{a_k}$.  By Definition \ref{definition:good_fundamental_domain} we have $\gamma_{a_k}(\pro^1\setminus B'_{a_k})=B_{a_k}^+$, so $h_kb\in B_{a_k}^{+}$.  By the disjointness of the $2g$ closed balls, we know that $h_kb\notin B_{a_{k-1}}'$, and since $\gamma_{a_{k-1}}(\pro^1\setminus B_{a_{k-1}}')=B_{a_{k-1}}^+$, we have $h_{k-1}h_kb\in B_{a_{k-1}}^+$.  We may continue in this fashion until we find that $h_1h_2\ldots h_kb\in B_{a_1}^{+}$.

The only possible obstruction to the above argument in the case of  $h_i\in\{\gamma_1^{\pm},\ldots,\gamma_g^{\pm}\}$ occurs if $h_i\ldots h_kb\in B_{a_i}'^{+}$ and $h_{i-1}=\gamma_{a_i}$ (or, similarly, if $h_i\ldots h_kb\in B_{a_i}^+$ and $h_{i-1}=\gamma_{a_i}^{-1})$, since the above argument needs $\gamma_{a_i}$ to act on $\pro^1\setminus {B'}_{a_i}^+$.  However, this situation arises precisely when $h_i=\gamma_{a_i}^{-1}=h_{i-1}^{-1}$, meaning that the word is not reduced.  Since we've assumed $h_1\ldots h_k$ is reduced, we have the desired result.
\epf

For a fixed set of generators of $\Gamma{}$, there need not exist a good fundamental domain. If there exists a good fundamental domain for some set of free generators of $\Gamma{}$, we say that the generators are {\it in good position}. Gerritzen and van der Put \cite[\textsection I.4]{GP} proved that there always exists a set of generators in good position. They also proved the following desirable properties for good fundamental domains.

\begin{theorem}\label{statement:good_fundamental_domain}
Let $\Gamma{}$ be a Schottky group, $\Sigma{}$ its set of bad points, and $\Omega{}=\mathbb{P}^1\backslash{}\Sigma{}$.

(1) There exists a good fundamental domain for some set of generators $\gamma_1,\ldots ,\gamma_g$ of $\Gamma$.

Let $F$ be a good fundamental domain for $\gamma_1,\ldots,\gamma_g$, and let $\gamma\in\Gamma$.

(2) If $\gamma{}\neq{}\mathrm{id}$, then $\gamma{}F^{\circ{}}\cap{}F=\phi{}$.

(3) If $\gamma{}\notin{}\{\mathrm{id},\gamma{}_1,\dotsc{},\gamma{}_g,\gamma{}_1^{-1},\dotsc{},\gamma{}_g^{-1}\}$, then $\gamma{}F\cap{}F=\emptyset{}$.

(4) $\cup_{\gamma{}\in{}\Gamma{}} \gamma{}F = \Omega{}$.
\end{theorem}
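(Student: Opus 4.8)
The plan is to deduce Theorem \ref{statement:good_fundamental_domain} largely from Lemma \ref{statement:location_of_gamma_b}, treating parts (2), (3) and (4) in turn; part (1) is the existence result of Gerritzen--van der Put, which we take as given. For parts (2) and (3), the key observation is that the hypothesis of Lemma \ref{statement:location_of_gamma_b} is automatically satisfied for any point of the interior $F^\circ$: if $b\in F^\circ$ then $b$ lies outside all $2g$ closed balls, so in particular $b\notin B_j'$ and $b\notin B_j$ for every $j$, whatever the last letter $h_k$ of the reduced word happens to be. Thus for any $\gamma\in\Gamma\setminus\{\mathrm{id}\}$ with reduced word $h_1\dotsb h_k$ and any $b\in F^\circ$, the lemma gives $\gamma b\in B_i^+$ or $B_i'^+$ according as $h_1=\gamma_i$ or $\gamma_i^{-1}$. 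Since $F=\pro^1\setminus(B_1\cup\dotsb\cup B_g')$ is disjoint from all the \emph{open} balls but contains boundary points of the closed balls, we cannot immediately conclude $\gamma b\notin F$; but $\gamma F^\circ\cap F^\circ=\emptyset$ is immediate because $B_i^+,B_i'^+$ are disjoint from $F^\circ$ by definition of the interior. To get the sharper statement $\gamma F^\circ\cap F=\emptyset$ in (2) one argues on the boundary as well: a point of $F$ on the boundary lies on the sphere $\partial B_i^+$ for exactly one $i$; one checks, using that $\gamma_i$ maps $\pro^1\setminus B_i'^+$ onto $B_i$ (the open ball), that the image $\gamma b$ actually lands in the open ball unless the word has length one and equals $\gamma_i^{-1}$ — and a length-one word is excluded in (3) but for (2) we should instead observe that even when $\gamma=\gamma_i$ or $\gamma_i^{-1}$, the image of $F^\circ$ lands in $B_i$ or $B_i'$ respectively, i.e.\ strictly inside the open ball, hence outside $F$. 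I would phrase (2) by noting $\gamma_i(\pro^1\setminus B_i'^+)=B_i$, an equality stated just after Definition \ref{definition:good_fundamental_domain}, so that $\gamma_i F^\circ\subseteq\gamma_i(\pro^1\setminus B_i'^+)=B_i$, which is disjoint from $F$; and then for longer words invoke the lemma together with disjointness of the closed balls.

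For part (3), let $\gamma\notin\{\mathrm{id},\gamma_1,\dotsc,\gamma_g,\gamma_1^{-1},\dotsc,\gamma_g^{-1}\}$, so the reduced word $h_1\dotsb h_k$ has $k\geq 2$. Take $b\in F$; I want $\gamma b\notin F$. If $b\in F^\circ$ we are done by (2). If $b$ is on the boundary, say $b\in\partial B_j^+$ for a unique $j$ (the other boundary spheres being disjoint from it), then the only obstruction to applying Lemma \ref{statement:location_of_gamma_b} is if $h_k$ forces $b$ into a forbidden ball; since $b\notin B_\ell'$ and $b\notin B_\ell$ for any $\ell$ (boundary points lie on spheres, not in the open balls, and the spheres are pairwise disjoint), the hypothesis of the lemma holds, and we conclude $\gamma b\in B_i^+$ or $B_i'^+$ with $i$ determined by $h_1$. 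Because $k\geq 2$, applying the lemma to the reduced word gives a closed ball, but I then want to upgrade to the open ball: the last two letters satisfy $h_1h_2$ reduced, so after $h_2\dotsb h_k b$ lands in some $B_m^{+}$ (or $B_m'^+$) with $m\neq$ the index of $h_1^{-1}$, the final application of $h_1=\gamma_i$ uses $\gamma_i(\pro^1\setminus B_i'^+)=B_i$ — an open ball — giving $\gamma b\in B_i$, hence $\gamma b\notin F$. (The symmetric case $h_1=\gamma_i^{-1}$ uses $\gamma_i^{-1}(\pro^1\setminus B_i^+)=B_i'$.) This is the step requiring care: one must track whether the intermediate image is in a closed ball strictly contained in the complement of the relevant $B^+$, which follows from disjointness of the $2g$ closed balls.

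For part (4), the inclusion $\bigcup_{\gamma\in\Gamma}\gamma F\subseteq\Omega$ holds because $F\subseteq\Omega$ (the closed balls $B_i^+,B_i'^+$ contain all the fixed points and hence $F$ avoids a neighborhood of $\Sigma$ — more precisely $\Sigma\subseteq B_1^+\cup\dotsb\cup B_g'^+$, which one checks from Lemma \ref{statement:location_of_gamma_b} applied to fixed points, since the attracting/repelling fixed point of a reduced word $h_1\dotsb h_k$ lies in the ball indexed by $h_1$), and $\Omega$ is $\Gamma$-invariant. The reverse inclusion is the substantive part: given $x\in\Omega$, I must produce $\gamma\in\Gamma$ with $\gamma x\in F$. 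If $x\in F$ take $\gamma=\mathrm{id}$. Otherwise $x$ lies in one of the open balls, say $x\in B_i$; then $\gamma_i^{-1}x\in\gamma_i^{-1}(B_i)=\pro^1\setminus B_i'^+$, which has ``moved $x$ out of $B_i$.'' Iterating, one builds a sequence $x, \gamma_{i_1}^{-1}x, \gamma_{i_2}^{-1}\gamma_{i_1}^{-1}x,\dotsc$ by repeatedly stripping off whichever ball the current point lies in; the reduced-word bookkeeping (never applying $\gamma_j$ right after $\gamma_j^{-1}$) is exactly what makes these compositions reduced words, and Lemma \ref{statement:location_of_gamma_b} controls where the partial images sit. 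The process terminates at a point of $F$ after finitely many steps; if it did not terminate, the nested sequence of closed balls produced would have a common point in $\Sigma$, contradicting $x\in\Omega$. I expect the main obstacle to be this termination argument in (4): one must argue that the radii of the successive balls shrink, or equivalently that an infinite reduced word would pin $x$ down to a bad point, which is where the discreteness/freeness of $\Gamma$ and the ultrametric geometry of the nested balls really enter. This can be organized cleanly by induction on a suitable ``complexity'' of $x$ relative to the ball configuration, or by citing the corresponding argument in \cite[\textsection I.4]{GP} or \cite[Theorem 6.2]{Ka}.
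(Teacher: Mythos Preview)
The paper does not actually prove Theorem \ref{statement:good_fundamental_domain}; it simply attributes all four parts to Gerritzen and van der Put \cite[\textsection I.4]{GP} and states the result. So your proposal is not competing with a proof in the paper but rather supplying one where the paper gives only a citation.

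Your argument is essentially correct and is the natural way to extract (2)--(4) from Lemma \ref{statement:location_of_gamma_b}. The refinement you make---upgrading the conclusion of the lemma from ``lands in the closed ball $B_i^+$'' to ``lands in the open ball $B_i$'' by peeling off the last letter $h_1$ and using $\gamma_i(\mathbb{P}^1\setminus B_i'^+)=B_i$---is exactly what is needed for (2) and (3), and you handle it correctly. The reduction procedure you describe for (4) is precisely Subroutine \ref{algorithm:reduction_fundamental_domain}, and your termination argument (an infinite reduced word would trap $x$ in a nested intersection of balls lying in $\Sigma$) is the right idea.

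One point deserves tightening. In (4) you write $\Sigma\subseteq B_1^+\cup\dotsb\cup B_g'^+$ (closed balls) and conclude $F\subseteq\Omega$. But $F$ meets the closed balls along their boundary spheres, so containment of $\Sigma$ in the \emph{closed} balls is not enough; you need $\Sigma$ contained in the union of the \emph{open} balls. This does hold: for a cyclically reduced $\gamma=h_1\dotsb h_k$, iterating Lemma \ref{statement:location_of_gamma_b} on powers $\gamma^n$ shows the attracting fixed point lies in a strictly nested sequence of closed balls inside $B_i^+$ (radii shrinking by at least a fixed factor per period), hence in $B_i$; limit points of $\Sigma$ are handled the same way. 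This is a small gap, easily closed, and you already flag the shrinking-radii issue as the delicate step.
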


The statements (2), (3), and (4) imply that $\Omega{}/\Gamma{}$ can be obtained from $F$ by glueing the boundary of $F$. More specifically, $B_i^+\backslash{}B_i$ is glued with $B_i'^+\backslash{}B_i'$ via the action of $\gamma{}_i$. We have designed the following subroutine, which takes any point $p$ in $\Omega{}$ and finds a point $q$ in $F$ such that they are equivalent modulo the action of $\Gamma{}$. This subroutine is useful in developing the algorithms in Section 3 and 4.

\begin{subroutine}[Reducing a point into a good fundamental domain]\label{algorithm:reduction_fundamental_domain}\quad{}
\rm{
\begin{algorithmic}[1]

\Require Matrices $\gamma_1,\ldots \gamma_g$ generating a Schottky group $\Gamma$, a good fundamental domain $F=\mathbb{P}^1\backslash{}(B_1\cup{}\dotsb{}\cup{}B_g\cup{}B_1'\cup{}\dotsb{}\cup{}B_g')$ associated to these generators, and a point $p\in{}\Omega{}$.
\Ensure A point $q\in{}F$ and an element $\gamma{}\in{}\Gamma{}$ such that $q=\gamma{}p$.

\State Let $q\gets{}p$ and $\gamma{}\gets{}\mathrm{id}$.
\While {$p\notin{}F$}
\State If $q\in{}B_i'$, let $q\gets{}\gamma{}_iq$ and $\gamma{}\gets{}\gamma{}_i\gamma{}$.
\State Otherwise, if $q\in{}B_i$, let $q\gets{}\gamma{}_i^{-1}q$ and $\gamma{}\gets{}\gamma{}_i^{-1}\gamma{}$.
\EndWhile
\State \Return $q$ and $\gamma{}$.

\end{algorithmic}}
\end{subroutine}

\begin{proof}
The correctness of this subroutine is clear. It suffices to prove that the algorithm always terminates. Given $p\in{}\Omega{}$, if $p\notin{}F$, by Theorem \ref{statement:good_fundamental_domain}, there exists $\gamma{}^{\circ{}}=h_1h_2\dotsb{}h_k\in{}\Gamma{}$ (where each $h_j$ is $\gamma_i$ or $\gamma_i^{-1}$ for some $i$) such that $\gamma{}^{\circ{}}p\in{}F$. Without loss of generality, we may assume that $\gamma{}^{\circ{}}$ is chosen such that $k$ is the smallest. Steps 3,4 and Lemma \ref{statement:location_of_gamma_b} make sure that we always choose $q\leftarrow{}h_kq$ and $\gamma{}\leftarrow{}h_k\gamma{}$. Therefore, this subroutine terminates with $\gamma{}=\gamma{}^{\circ{}}$.
\end{proof}

We can extend the definition of good fundamental domains to the analytic projective line $(\mathbb{P}^1)^{an}$. In general, analytification of an algebraic variety is defined in terms of multiplicative seminorms. For our special case $(\mathbb{P}^1)^{an}$, there is a simpler description.  As detailed in \cite{Ba}, $(\mathbb{P}^1)^{an}$ consists of four types of points:
\begin{itemize}
\item Type 1 points are just the usual points of $\mathbb{P}^1$.
\item Type 2 points correspond to closed balls $B(a,r)^+$ where $r\in{}|K^{\times{}}|$.
\item Type 3 points correspond to closed balls $B(a,r)^+$ where $r\notin{}|K^{\times{}}|$.
\item Type 4 points correspond to equivalence classes of sequences of nested closed balls $B_1^+\supset{}B_2^+\supset{}\dotsb{}$ such that their intersection is empty.
\end{itemize}

There is a metric on the set of Type 2 and Type 3 points, defined as follows: let $P_1$ and $P_2$ be two such points and let $B(a_1,r_1)^+$ and $B(a_2,r_2)^+$ be the corresponding closed balls.
\begin{itemize}
\item[(1)] If one of them is contained in the other, say $B(a_1,r_1)^+$ is contained in $B(a_2,r_2)^+$, then the distance $d(P_1,P_2)$ is $\log_p(r_2/r_1)$.
\item[(2)] In general, there is a unique smallest closed ball $B(a_3,r_3)^+$ containing both of them. Let $P_3$ be the corresponding point. Then, $d(P_1,P_2)$ is defined to be $d(P_1,P_3)+d(P_3,P_2)$.
\end{itemize}
The metric can be extended to Type 4 points.

This metric makes $(\mathbb{P}^1)^{an}$ a tree with infinite branching, as we now describe. There is a unique path connecting any two points $P_1$ and $P_2$. In case (1) above, the path is defined by the isometry $t\mapsto{}B(a_1,p^t)^+$, $t\in{}[\log{}(r_1),\log{}(r_2)]$. It is straightforward to check that $B(a_1,r_2)^+=B(a_2,r_2)^+$. In case (2) above, the path is the concatenation of the paths from $P_1$ to $P_3$ and from $P_3$ to $P_2$. Then, Type 1 points become limits of Type 2 and Type 3 points with respect to this metric. More precisely, if $x\neq{}\infty{}$, then it lies at the limit of the path $t\mapsto{}B(x,p^{-t})^+$, $t\in{}[0,+\infty{})$. Type 1 points behave like leaves of the tree at infinity. For any two Type 1 points $x,y$, there is a unique path in $(\mathbb{P}^1)^{an}$ connecting them, which has infinite length.

\begin{defn}
Let $\Sigma{}$ be a discrete subset in $\mathbb{P}^1$. The subtree of $(\mathbb{P}^1)^{an}$ \emph{spanned by} $\Sigma{}$, denoted $T(\Sigma{})$, is the union of all paths connecting all pairs of points in $\Sigma{}$.
\end{defn}

An {\it analytic open ball} $B(a,r)^{an}$ is a subset of $(\mathbb{P}^1)^{an}$ whose set of Type 1 points is just $B(a,r)$ and whose Type 2, 3, and 4 points correspond to closed balls $B(a',r')^+\subset{}B(a,r)$ and the limit of sequences of such closed balls. An {\it analytic closed ball} is similar, with $B(a,r)$ replaced with $B(a,r)^+$. Just as in the case of balls in $\mathbb{P}^1$, the analytic closed ball $(B^+)^{an}$ is not the closure of $B^{an}$ in the metric topology of $(\mathbb{P}^1)^{an}$. The complement of an analytic open ball is an analytic closed ball, and vice versa. In an analytic closed ball $(B(a,r)^+)^{an}$ such that $r\in{}|K^\times{}|$, the {\it Gaussian point} is the Type 2 point corresponding to $B(a,r)^+$. An {\it analytic annulus} is $B\backslash{}B'$, where $B$ and $B'$ are analytic balls such that $B'\subsetneqq{}B$. If $B$ is an analytic open (resp. closed) ball and $B'$ is an analytic closed (resp. open) ball, then $B\backslash{}B'$ is an {\it analytic open annulus} (resp. {\it analytic closed annulus}). A special case of analytic open annulus is the complement of a point in an analytic open ball.

Any element of $PGL(2,K)$ sends open balls to open balls and closed balls to closed balls. Thus, there is a well defined action of $PGL(2,K)$ on~$(\mathbb{P}^1)^{an}$.

\begin{defn}\label{definition:analytic_good_fundamental_domain}
A \emph{good fundamental domain} $F\subset (\mathbb{P}^1)^{an}$ corresponding to the generators $\gamma{}_1,\dotsc{},\gamma{}_g$ is the complement of $2g$ analytic open balls $B_1^{an},\dotsc{},B_g^{an},B_1'^{an},\dotsc{},B_g'^{an}$, such that the corresponding analytic closed balls $(B_1^+)^{an},\dotsc{},(B_g^+)^{an},(B_1'^+)^{an},\dotsc{},(B_g'^+)^{an}$ are disjoint, and that $\gamma{}_i((\mathbb{P}^1)^{an}\backslash{}B_i'^{an})=(B_i^+)^{an}$ and $\gamma{}_i^{-1}((\mathbb{P}^1)^{an}\backslash{}B_i^{an})=(B_i'^+)^{an}$. The \emph{interior of $F$} is $F^{\circ{}}=(\mathbb{P}^1)^{an}\backslash{}((B_1^+)^{an}\cup{}\dotsc{}\cup{}(B_g^+)^{an}\cup{}(B_1'^+)^{an}\cup{}\dotsc{}\cup{}(B_g'^+)^{an})$. The \emph{boundary of $F$} is $F\backslash{}F^{\circ{}}$.
\end{defn}

Definition \ref{definition:analytic_good_fundamental_domain} implies that $\gamma{}_i((\mathbb{P}^1)^{an}\backslash{}(B_i'^+)^{an})=B_i^{an}$ and $\gamma{}_i^{-1}((\mathbb{P}^1)^{an}\backslash{}(B_i^+)^{an})=B_i'^{an}$.

We now argue that there is a one-to-one correspondence between good fundamental domains in $\pro^1$ and good fundamental domains in $(\pro^1)^{an}$.  (This fact is well-known, though seldom explicitly stated in the literature; for instance, it's taken for granted in the later chapters of \cite{GP}.)  If $\mathbb{P}^1\backslash{}(B_1\cup{}\dotsb{}\cup{}B_g\cup{}B_1'\cup{}\dotsb{}\cup{}B_g')$ is a good fundamental domain in $\mathbb{P}^1$, then $(\mathbb{P}^1)^{an}\backslash{}(B_1^{an}\cup{}\dotsb{}\cup{}B_g^{an}\cup{}B_1'^{an}\cup{}\dotsb{}\cup{}B_g'^{an})$ is a good fundamental domain in $(\mathbb{P}^1)^{an}$. Indeed, since the closed balls $B_1^+,\dotsc{},B_g^+,B_1'^+,\dotsc{},B_g'^+$ are disjoint, and the corresponding analytic closed balls consist of points corresponding to closed balls contained in $B_1^+,\dotsc{},B_g^+,B_1'^+,\dotsc{},B_g'^+$ and their limits, the analytic closed balls are also disjoint. Conversely, if $(\mathbb{P}^1)^{an}\backslash{}(B_1^{an}\cup{}\dotsb{}\cup{}B_g^{an}\cup{}B_1'^{an}\cup{}\dotsb{}\cup{}B_g'^{an})$ is a good fundamental domain in $(\mathbb{P}^1)^{an}$, then $\mathbb{P}^1\backslash{}(B_1\cup{}\dotsb{}\cup{}B_g\cup{}B_1'\cup{}\dotsb{}\cup{}B_g')$ is a good fundamental domain in $\mathbb{P}^1$, because the classical statement can be obtained from the analytic statement by considering only Type 1 points. This correspondence allows us to abuse notation by not distinguishing the classical case and the analytic case. Theorem \ref{statement:good_fundamental_domain} is also true for analytic good fundamental domains.

Another analytic object of interest to us is the minimal skeleton of the analytification of the genus $g$ curve $\Omega/\Gamma{}$ (a task that is part of Algorithm \ref{algorithm:abstract_tropical_curve}), so we close this section with background information on this object. The following definitions are taken from Baker, Payne and Rabinoff \cite{BPR}, with appropriate simplification.

\begin{defn}
\begin{itemize}
\item[(1)] The \emph{skeleton} of an open annulus $B\backslash{}B'$ is the straight path between the Gaussian point of $B$ and the Gaussian point of $B'$.
\item[(2)] Let $C$ be a smooth curve over $K$. A \emph{semistable vertex set} $V$ is a finite set of Type 2 points in $C^{an}$ such that $C^{an}\backslash{}V$ is the disjoint union of open balls and open annuli. The \emph{skeleton corresponding to $V$} is the union of $V$ with all skeleta of these open annuli.
\item[(3)] If $\mathrm{genus}(C)\geq{}2$, then $C^{an}$ has a unique \emph{minimal skeleton}. The minimal skeleton is the intersection of all skeleta. If $\mathrm{genus}(C)\geq{}2$ and $C$ is complete, then the minimal skeleton is a finite metric graph.  We sometimes call this minimal skeleton the \emph{abstract tropical curve} of $C^{an}$.
\end{itemize}
\end{defn}

\begin{defn}
An \emph{algebraic semistable model} of a smooth curve $C$ over $K$ is a scheme $X$ over $R$ whose generic fiber $X_K$ is isomorphic to $C$ and whose special fiber $X_k$ satisfies
\begin{itemize}
\item $X_k$ is a connected and reduced curve, and
\item all singularities of $X_k$ are ordinary double points.
\end{itemize}
\end{defn}

Work towards algorithmic computation of semistable models is discussed in such works as \cite[\textsection1.2]{AW} and \cite[\textsection3.1]{BW}, though such computation is in general a hard problem.

Semistable models are related to skeleta in the following way: take a semistable model $X$ of $C$. Associate a vertex for each irreducible component of $X_k$. For each ordinary intersection of two irreducible components in $X_k$, connect an edge between the two corresponding vertices. The resulting graph is combinatorially a skeleton of $C^{an}$.

\section{Algorithms Starting With a Schottky Group}\label{section:main_algorithms}

If we have a Schottky group $\Gamma=\left<\gamma_1,\ldots,\gamma_g\right>$ in terms of its generators, there are many objects we wish to compute for the corresponding curve $\Omega/\Gamma$, such as the Jacobian of the curve, the minimal skeleton of the analytification of the curve, and a canonical embedding for the curve.  In this section we present algorithms for numerically computing these three objects, given the input of a Schottky group with generators in good position.  For an algorithm that puts arbitrary generators of a Schottky group into good position, see Section~\ref{goodpositionalgorithmsection}.

\begin{remark}  Several results in this section are concerned with the accuracy of numerical approximations. Most of our results will be of the form
$$\left|\frac{\text{estimate}}{\text{actual}}-1\right|=\text{size of error term $\leq$ a small real number of the form $p^{-N}$},$$
where we think of $N\gg 0$.  This is equivalent to
$$\frac{\text{estimate}}{\text{actual}}-1=\text{error term $=$ a $p$-adic number of the form $bp^N$},$$
 where $|b|\leq 1$.  So, since $|p^N|=p^{-N}$, the \emph{size} of the error term is a \emph{small} power of $p$, while the \emph{error term itself} is a \emph{large} power of $p$ (possibly with a constant that doesn't matter much). 
 
Rearranging the second equation gives
$$\text{estimate}=\text{actual}+\text{actual}\cdot bp^N, $$
meaning that we are considering not the \emph{absolute} precision of our estimate, but rather the \emph{relative} precision.  In this case we would say that our estimate is of \emph{relative precision $O(p^N)$}.  So if we desire relative precision $O(p^N)$, we want the \emph{actual} error term to be $p^{N}$ (possibly with a constant term with nonnegative valuation), and the \emph{size} of the error term to be at most $p^{-N}$.
\end{remark}

\subsection{The Period Matrix of the Jacobian}

Given a Schottky group $\Gamma=\left<\gamma_1,\ldots,\gamma_g\right>$, we wish to find a period matrix $Q$ so that $\text{Jac}(\Omega/\Gamma)\cong (K^*)^g/Q$.  First we'll set some notation.  For any parameters $a,b\in \Omega$, we introduce the following analytic function in the unknown $z$, called a \emph{theta function}:
\begin{equation*}
\Theta{}(a,b;z): = \prod_{\gamma{}\in{}\Gamma{}}\frac{z-\gamma{}a}{z-\gamma{}b}.
\end{equation*}
Note that if $\Gamma$ is defined over $\mathbb{Q}_p$  and $a,b,z\in\mathbb{Q}_p$, then $\Theta(a,b;z)\in \mathbb{Q}_p\cup\{\infty\}$.  (This is an instance of ``$\mathbb{Q}_p$ in, $\mathbb{Q}_p$ out.'')
For any $\alpha\in \Gamma$ and $a\in\Omega$, we can specialize to 
\begin{equation*}
u_{\alpha{}}(z) := \Theta{}(a,\alpha{}a;z).
\end{equation*}
 It is shown in \cite[II.3]{GP} that the function $u_{\alpha}(z)$ is in fact independent of the choice of $a$. This is because for any choice of $a,b\in \Omega$ we have
\begin{align*}
\frac{\Theta{}(a,\alpha{}a;z)}{\Theta{}(b,\alpha{}b;z)}=&\prod_{\gamma\in\Gamma}\left(\frac{z-\gamma{}a}{z-\gamma{}\alpha a}\frac{z-\gamma{}\alpha b}{z-\gamma{}b}\right)
=\prod_{\gamma\in\Gamma}\left(\frac{z-\gamma{}a}{z-\gamma{}b}\frac{z-\gamma{}\alpha b}{z-\gamma\alpha{}a}\right)
\\=&\prod_{\gamma\in\Gamma}\frac{z-\gamma{}a}{z-\gamma{}b}\cdot\prod_{\gamma\in \Gamma}\frac{z-\gamma{}\alpha b}{z-\gamma\alpha{}a}
=\prod_{\gamma\in\Gamma}\frac{z-\gamma{}a}{z-\gamma{}b}\cdot\prod_{\gamma\in \Gamma}\frac{z-\gamma{} b}{z-\gamma{}a}
\\=&\Theta(a,b;z)\cdot\Theta(b,a;z)=1.
\end{align*}

From \cite[VI.2]{GP} we have a formula for the period matrix $Q$ of $\text{Jac}(\Omega/\Gamma)$:

\begin{theorem}\label{theorem:computing_period_matrix}
The period matrix $Q$ for $\text{Jac}(\Omega/\Gamma)$ is given by
\begin{equation*}
Q_{ij} = \frac{u_{\gamma{}_i}(z)}{u_{\gamma{}_i}(\gamma{}_j{}z)},
\end{equation*}
where $z$ is any point in $\Omega$.
\end{theorem}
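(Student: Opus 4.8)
The plan is to show that the proposed matrix entries $Q_{ij} = u_{\gamma_i}(z)/u_{\gamma_i}(\gamma_j z)$ do not depend on the choice of $z \in \Omega$, and then to invoke the description of the period matrix from Mumford's theory (as recorded in \cite[VI.2]{GP}) to identify this matrix with $Q$. The independence of $z$ is the heart of the computation, and everything else is bookkeeping with the product defining $\Theta$.

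First I would establish the basic functional equations of the theta functions. Directly from the definition $\Theta(a,b;z) = \prod_{\gamma \in \Gamma}\frac{z-\gamma a}{z-\gamma b}$, a reindexing of the product over the coset $\Gamma \to \beta\Gamma$ shows that for any $\beta \in \Gamma$,
\begin{equation*}
\frac{\Theta(a,b;\beta z)}{\Theta(a,b;z)} = \prod_{\gamma \in \Gamma}\frac{\beta z - \gamma a}{z - \gamma a}\cdot\frac{z-\gamma b}{\beta z - \gamma b},
\end{equation*}
and after substituting $\gamma \mapsto \beta^{-1}\gamma$ in the numerator factors one obtains the standard multiplicative automorphy factor; convergence of the (suitably interpreted) infinite products over the Schottky group is exactly what \cite[II.3]{GP} provides, together with the already-quoted independence of $u_\alpha(z) = \Theta(a,\alpha a;z)$ from $a$. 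Concretely I would record the identity $u_\alpha(\beta z) = c_{\alpha,\beta}\, u_\alpha(z)$ for a constant $c_{\alpha,\beta} \in K^\times$ independent of $z$, which is what makes the ratio $u_{\gamma_i}(z)/u_{\gamma_i}(\gamma_j z)$ a constant, namely $c_{\gamma_i,\gamma_j}^{-1}$. This shows $Q_{ij}$ is well-defined independent of $z$.

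Next I would match these constants with the period matrix. By Mumford's construction (see \cite[VI.2]{GP}), $\mathrm{Jac}(\Omega/\Gamma)$ is $(K^*)^g/Q$ where $Q$ is the matrix of automorphy constants of a basis of theta functions attached to the generators $\gamma_1,\dots,\gamma_g$; the $u_{\gamma_i}$ furnish exactly such a basis, and the $(i,j)$ entry of $Q$ is the factor by which $u_{\gamma_i}$ is multiplied under the substitution $z \mapsto \gamma_j z$. Reconciling the sign/orientation convention (whether the entry is $c_{\gamma_i,\gamma_j}$ or its inverse) with the way $Q$ acts as a lattice of multiplicative periods gives the stated formula $Q_{ij} = u_{\gamma_i}(z)/u_{\gamma_i}(\gamma_j z)$. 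One should also remark that symmetry of $Q$, i.e. $c_{\gamma_i,\gamma_j} = c_{\gamma_j,\gamma_i}$, follows from the symmetry properties of $\Theta$ already used in the excerpt (the computation showing $\Theta(a,b;z)\Theta(b,a;z)=1$ generalizes to the reciprocity $\Theta(a,b;c)\cdot(\text{analogous expression}) $ relating the two orders).

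The main obstacle I anticipate is not the formal manipulation but the convergence and rigor of the infinite products: $\prod_{\gamma\in\Gamma}(z-\gamma a)$ individually diverges, so the automorphy computation must be carried out with the convergent ratios $\Theta(a,b;z)$ throughout, being careful that all reindexing of $\Gamma$-cosets is legitimate in the non-Archimedean setting (here the ultrametric inequality makes the relevant products converge, and $\Gamma$ being a Schottky group guarantees the $\gamma a \to \text{limit set}$ behavior needed). Since these analytic facts are precisely what is cited from \cite[II.3, VI.2]{GP}, I would lean on those references for convergence and automorphy, and present the proof as: (i) recall $u_\alpha$ is independent of $a$ (done in the excerpt); (ii) deduce from the automorphy factor that $u_{\gamma_i}(z)/u_{\gamma_i}(\gamma_j z)$ is a $z$-independent constant; (iii) identify that constant with $Q_{ij}$ via \cite[VI.2]{GP}.
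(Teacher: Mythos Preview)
Your proposal is correct in spirit, but it does considerably more than the paper itself. The paper does not prove this theorem at all: it simply cites the formula from \cite[VI.2]{GP}, and separately cites \cite[II.3]{GP} for the independence of $Q_{ij}$ from the choice of $z$. There is no proof environment attached to the statement.

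What you have sketched---the automorphy relation $u_\alpha(\beta z) = c_{\alpha,\beta}\,u_\alpha(z)$, the resulting $z$-independence of the ratio, and the identification of $c_{\gamma_i,\gamma_j}^{-1}$ with the $(i,j)$ entry of the period matrix---is essentially the content of those cited sections of Gerritzen--van der Put. So your outline is a faithful reconstruction of the argument behind the citation, and your remarks about convergence (that one must work with the ratios $\Theta(a,b;z)$ rather than divergent single products) are exactly the subtleties handled in \cite[II.3]{GP}. In that sense there is no gap; you have simply unpacked a black-box reference that the paper chose to leave boxed.
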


As shown in \cite[II.3]{GP}, the choice of $z$ does not affect the value of $Q_{ij}$.

Theorem \ref{theorem:computing_period_matrix} implies that in order to compute each $Q_{ij}$, it suffices to find a way to compute $\Theta{}(a,b;z)$. Since a theta function is defined as a product indexed by the infinite group $\Gamma$, approximation will be necessary.  Recall that each element $\gamma{}$ in the free group generated by $\gamma{}_1,\dotsc{},\gamma{}_g$ can be written in a unique shortest product $h_1h_2\dotsb{}h_k$ called the reduced word, where each $h_i\in{}\{\gamma{}_1,\dotsc{},\gamma{}_g,\gamma{}_1^{-1},\dotsc{},\gamma{}_g^{-1}\}$.  We can approximate $\Theta{}(a,b;z)$ by replacing the product over $\Gamma{}$ with a product over $\Gamma{}_m$, the set of elements of $\Gamma{}$ whose reduced words have length $\leq{}m$. More precisely, we approximate $\Theta{}(a,b;z)$ with
\begin{equation*}
\Theta{}_m(a,b;z) := \prod_{\gamma{}\in{}\Gamma{}_m}\frac{z-\gamma{}a}{z-\gamma{}b},
\end{equation*}
where
\begin{equation*}
\Gamma_m=\{h_1h_2\ldots h_k\,|\, 0\leq k\leq m, h_i\in \{\gamma_1^{\pm},\ldots \gamma_g^{\pm}\}, h_i\neq h_{i+1}^{-1} \text{ for any i}\}.
\end{equation*}
With this approximation method, we are ready to describe an algorithm for computing $Q$.

\begin{algorithm}[Period Matrix Approximation]\label{algperiodmatrix}\quad{}
{\rm
\begin{algorithmic}[1]

\Require Matrices $\gamma_1,\ldots \gamma_g\in \mathbb{Q}_p^{2\times 2}$ generating a Schottky group $\Gamma$ in good position, and an integer $n$ to specify desired relative precision.
\Ensure An approximation for a period matrix $Q$ for $\text{Jac}(\Omega/\Gamma)$ up to relative precision~$O(p^n)$.

\State Choose suitable $p$-adic numbers $a$ and $z$ as described in Theorem \ref{theorem:approximation_theorem}.
\State Based on $n$, choose a suitable positive integer $m$ as described in Remark \ref{remark:choosing_m}.
\For {$1\leq i,j\leq g$}
\State Compute
$Q_{ij}=\Theta_m(a,\gamma_i(a);z)/\Theta_m(a,\gamma_i(a);\gamma_j(z))$
\EndFor
\State \Return $Q$.

\end{algorithmic}
}
\end{algorithm}

The complexity of this algorithm is in the order of the number of elements in $\Gamma{}_m$, which is exponential in $m$. The next issue is that to achieve certain precision in the final result, we need to know how large $m$ needs to be. Given a good fundamental domain $F$ for the generators $\gamma{}_1,\dotsc{},\gamma{}_g$, we are able to give an upper bound on the error in our estimation of $\Theta{}$ by $\Theta{}_m$.  (Algorithm \ref{algperiodmatrix} would work even if the given generators were not in good position, but would in general require a very large $m$ to give the desired convergence. See Example \ref{example:period_matrix}(4).)

To analyze the convergence of the infinite product
\begin{equation*}
\Theta{}(a,\gamma_i(a);z) = \prod_{\gamma{}\in{}\Gamma{}}\frac{z-\gamma{}a}{z-\gamma{}\gamma{}_ia},
\end{equation*}
we need to know where $\gamma{}a$ and $\gamma{}\gamma{}_ia$ lie.   We can determine this by taking the metric of $(\mathbb{P}^1)^{an}$ into consideration. Assume that $\infty{}$ lies in the interior of $F$. Let $S=\{P_1,\dotsc{},P_g,P_1',\dotsc{},P_g'\}$ be the set of points corresponding to the set of closed balls $\{B_1^+,\dotsc{},B_g^+,B_1'^+,\dotsc{},B_g'^+\}$ from the characterization of the good fundamental domain. Let $c$ be the smallest pairwise distance between these points.  This distance $c$ will be key for determining our choice of $m$ in the algorithm.

\begin{proposition}\label{statement:distance_gamma_a}
Let $F$, $S$, and $c$ be as above.  Suppose the reduced word for $\gamma{}$ is $h_1h_2\dotsb{}h_k$, where $k\geq{}0$. Then $d(\gamma{}P_i,S)\geq{}kc$ for all $i$ unless $h_k=\gamma{}_i^{-1}$, and $d(\gamma{}P_i',S)\geq{}kc$ unless $h_k=\gamma{}_i$.
\end{proposition}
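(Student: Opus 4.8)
The plan is to induct on the length $k$ of the reduced word $\gamma = h_1 h_2 \cdots h_k$, exploiting the metric structure of $(\mathbb{P}^1)^{an}$ and the key geometric fact that each generator $\gamma_i$ (or its inverse) maps the complement of one analytic ball onto another analytic closed ball, stretching distances in a controlled way. The base case $k=0$ is immediate, since then $\gamma P_i = P_i \in S$, so $d(\gamma P_i, S) = 0 = kc$ (and the exceptional clause is vacuous). For the inductive step I would write $\gamma = h_1 \gamma'$ where $\gamma' = h_2 \cdots h_k$ has reduced length $k-1$, and analyze how applying $h_1$ moves the point $\gamma' P_i$.

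The crux is the following. Suppose $h_1 = \gamma_j$ (the case $h_1 = \gamma_j^{-1}$ being symmetric). By the inductive hypothesis applied to $\gamma'$, the point $\gamma' P_i$ lies at distance $\geq (k-1)c$ from $S$, provided we are not in the exceptional case for $\gamma'$ — and here I need to check that the hypothesis ``$h_k \neq \gamma_i^{-1}$'' for $\gamma$ transfers appropriately, together with the fact that $h_1 h_2$ being part of a reduced word forces $h_2 \neq h_1^{-1} = \gamma_j^{-1}$, so $\gamma' P_i$ is not one of the excluded points near $B_j$. Concretely, $\gamma' P_i$ corresponds to a closed ball disjoint from (or equal to a limit of balls outside) $B_j$, so $\gamma' P_i \notin B_j^{an}$, i.e. $\gamma' P_i \in (\mathbb{P}^1)^{an} \setminus B_j^{an}$. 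Now Definition \ref{definition:analytic_good_fundamental_domain} gives $\gamma_j\big((\mathbb{P}^1)^{an} \setminus (B_j'^+)^{an}\big) = B_j^{an}$; dually one reads off that $\gamma_j$ sends the region outside $B_j'^{an}$ into $(B_j^+)^{an}$, and — this is the quantitative input — the distance from the boundary Gaussian point $P_j'$ to $S$ along the skeleton, combined with disjointness of the $2g$ closed balls, means that $\gamma_j$ carries any point at skeletal distance $\delta$ outside $B_j'^{an}$ to a point at distance $\geq \delta + c$ from $S$, because traveling from $\gamma' P_i$ to the nearest point of $S$ other than through $P_j'$ must pass through the ``bridge'' of length $\geq c$ separating $P_j'$ from the rest of $S$, and $\gamma_j$ is an isometry on the tree. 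Assembling: $d(\gamma P_i, S) = d(\gamma_j \gamma' P_i, S) \geq d(\gamma' P_i, S) + c \geq (k-1)c + c = kc$, which is what we want, and the new exceptional direction is $h_k$ (unchanged), consistent with the statement.

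The main obstacle I anticipate is making the ``distance increases by at least $c$ under each generator'' step fully rigorous: one must argue carefully using the tree structure of $(\mathbb{P}^1)^{an}$ that the unique path from $\gamma' P_i$ to $S$ is forced to route through the Gaussian point $P_j'$ (or $P_j$ in the inverse case), so that applying the isometry $\gamma_j$ — which fixes nothing relevant but maps $P_j'$ to $P_j$ and expands the appropriate branch — genuinely adds the full separation constant $c$ rather than less. This requires knowing that $S \setminus \{P_j, P_j'\}$ lies ``on the far side'' of $P_j'$ from $\gamma' P_i$, which follows from disjointness of the closed balls $B_1^+, \dots, B_g'^+$ and the assumption $\infty \in F^\circ$, but the bookkeeping of which exceptional clause is active at each stage of the induction (tracking $h_k$, and ruling out cancellation $h_1 = h_2^{-1}$) is the delicate part. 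Once that geometric lemma about isometries of the Berkovich tree is in hand, the induction closes cleanly, and the symmetric treatment of $h_1 = \gamma_j^{-1}$ and of the primed points $P_i'$ is routine.
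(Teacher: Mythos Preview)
Your proposal is correct and follows essentially the same approach as the paper: induct on the word length $k$, peel off the leading letter $h_1$, use that $PGL(2,K)$ acts by isometries on the tree, and decompose the relevant path through the nearest point of $S$ to pick up an extra $c$. The paper makes the key step slightly more explicit by first invoking Lemma~\ref{statement:location_of_gamma_b} to place $\gamma P_i$ inside $B_1$ (so that $d(\gamma P_i, S) = d(\gamma P_i, P_1)$), then applying the isometry $\gamma_1^{-1}$ to get $d(h_2\cdots h_k P_i, P_1')$ and splitting that distance through the point $P\in S$ determined by $h_2$---exactly the ``bridge'' you describe, with reducedness $h_2 \neq h_1^{-1}$ ensuring $P \neq P_1'$.
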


\begin{proof}
We will prove this proposition by induction. If $k=0$, there is nothing to prove. Let $k>0$, and assume that the claim holds for all integers $n$ with $0\leq n<k$. Without loss of generality, we may assume $h_1=\gamma{}_1$. Let $B^+$ be the closed disk corresponding to $P_i$. By Lemma \ref{statement:location_of_gamma_b}, we have $\gamma{}(B^+)\subset{}B_1$. This means $P_1$ lies on the unique path from $\gamma{}P_i$ to $\infty{}$. Since we assumed $\infty{}\in{}F$, $p_1$ lies on the unique path from $\gamma{}P_i$ to any point in $S$. Thus, 
\begin{align*}
d(\gamma{}P_i,S) &= d(\gamma{}P_i,P_1) \\
&= d(\gamma{}_1^{-1}\gamma{}P_i,\gamma{}_1^{-1}P_1) \\
&= d(h_2h_3\dotsb{}h_kP_i,P_1').
\end{align*}
Let $P=P_j$ if $h_2=\gamma{}_j$ and $P=P_j'$ if $h_2=\gamma{}_j^{-1}$. By the same argument as above, $P$ lies on the unique path from $h_2h_3\dotsb{}h_kP_i$ to $P_1'$. The reducedness of the word $h_1h_2\dotsc{}h_k$ guarantees that $P\neq{}P_1'$. So
\begin{align*}
d(\gamma{}P_i,S) &= d(h_2h_3\dotsb{}h_kP_i,P_1') \\
&= d(h_2h_3\dotsb{}h_kP_i,P)+d(P,P_1') \\
&\geq{} (k-1)c+c=kc.
\end{align*}
The last step follows from the inductive hypothesis. The proof of the second part of this proposition is similar.
\end{proof}

\begin{proposition}\label{statement:approximation_fraction}
Let $F$, $S$, and $c$ be as above.  Let $z\in{}F$ and $a\in{}B_i'^+\backslash{}B_i'$ such that $a$, $z$, and $\infty{}$ are distinct modulo the action of $\Gamma{}$.   Suppose the reduced word for $\gamma{}\in \Gamma$ is $h_1h_2\dotsb{}h_k$.  If $k\geq{}2$ and $h_k\neq{}\gamma{}_i^{-1}$, then
\begin{equation*}
\left|\frac{z-\gamma{}a}{z-\gamma{}\gamma{}_ia}-1\right| \leq{} p^{-c(k-1)}.
\end{equation*}
\end{proposition}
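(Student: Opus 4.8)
The plan is to estimate the quantity $\left|\frac{z-\gamma a}{z-\gamma\gamma_i a}-1\right|$ by rewriting it as $\left|\frac{(z-\gamma a)-(z-\gamma\gamma_i a)}{z-\gamma\gamma_i a}\right| = \left|\frac{\gamma\gamma_i a-\gamma a}{z-\gamma\gamma_i a}\right|$, and then controlling the numerator and denominator separately using the metric estimates from Proposition \ref{statement:distance_gamma_a}. The point is that both $\gamma a$ and $\gamma\gamma_i a$ lie deep inside a small ball (far from $S$ in the $(\mathbb{P}^1)^{an}$-metric, hence of small diameter), so their difference is tiny, while $z$, living in $F$ with $\infty$ in $F^\circ$, stays a bounded distance away from that ball, so the denominator is comparatively large.

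First I would locate the relevant balls. Since $a\in B_i'^+\setminus B_i'$, the point $a$ is not in $B_i'$, so Lemma \ref{statement:location_of_gamma_b} applies to both $\gamma a$ (using that the last letter $h_k\neq\gamma_i^{-1}$, so $a$ avoids the forbidden ball for the word $\gamma$) and to $\gamma\gamma_i a$ (the reduced word for $\gamma\gamma_i$ ends in $\gamma_i$ since $h_k\neq\gamma_i^{-1}$, and its last-letter condition requires $a\notin B_i'$, which holds). Writing $h_1=\gamma_j^{\pm}$, both $\gamma a$ and $\gamma\gamma_i a$ lie in the same closed ball $B_j^{(\prime)+}$ corresponding to the first letter. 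More precisely, $\gamma\gamma_i a = \gamma\big(\gamma_i a\big)$ and $\gamma_i a\in B_i^+$ by Definition \ref{definition:good_fundamental_domain} applied to $a\notin B_i'$; so $\gamma\gamma_i a$ lies in the image under $\gamma$ of (a sub-ball of) $B_i^+$, which by Lemma \ref{statement:location_of_gamma_b} (applied to the word $\gamma$ acting on $\gamma_i a\in B_i^+$, noting $h_k\neq\gamma_i^{-1}$) sits in $B_j^{(\prime)}$; and $\gamma a$ likewise sits in $B_j^{(\prime)}$. So both points lie inside the closed ball $B^+$ corresponding to the point $\gamma P_i$ (or $\gamma P_i'$, matching the parity of $h_1$). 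The diameter of that ball, in terms of the $(\mathbb{P}^1)^{an}$-metric, is controlled by $d(\gamma P_i, S)\geq (k-1)c$ — here I use the bound from Proposition \ref{statement:distance_gamma_a} with $k-1$ in place of $k$, applied to the word $\gamma\gamma_i$ of length $k$ ending in $\gamma_i$, or equivalently track how far $\gamma P_i^{(\prime)}$ has descended from the Gaussian point of $B_j^{(\prime)+}$. The upshot is that $|\gamma\gamma_i a - \gamma a| \leq (\text{diam of } B^+)$, and the distance computation gives $\text{diam}(B^+)\leq p^{-c(k-1)}\cdot(\text{diam of }B_j^{(\prime)+})$, while the denominator satisfies $|z-\gamma\gamma_i a| \geq \text{diam}(B_j^{(\prime)+})$ because $z\in F$ lies outside $B_j^{(\prime)}$ but $\gamma\gamma_i a$ lies inside it (and $z\neq\gamma\gamma_i a$ modulo $\Gamma$ by the distinctness hypothesis), so $z$ is at least the radius of the enclosing ball away. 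Dividing, the $\text{diam}(B_j^{(\prime)+})$ factors cancel and we are left with $\leq p^{-c(k-1)}$.

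The main obstacle I anticipate is bookkeeping the metric/diameter estimate cleanly: translating "$d(\gamma P_i, S)\geq (k-1)c$" into a precise bound of the form $\text{diam}(\gamma(B_i^+)) \leq p^{-c(k-1)}\cdot\text{diam}(B_j^{(\prime)+})$, and simultaneously showing the denominator $|z-\gamma\gamma_i a|$ is at least $\text{diam}(B_j^{(\prime)+})$ (equivalently, that $z$ sits at or above the Gaussian point of $B_j^{(\prime)+}$ on the tree, using $\infty\in F^\circ$ and $z\in F$). One has to be careful that it is $k-1$ and not $k$ that appears: the word for $\gamma\gamma_i$ has length $k$, and the relevant distance is from its image-ball up to the first "branch" point $P_j^{(\prime)}$, which costs only $k-1$ steps of length $\geq c$ along the tree since the descent into $B_j^{(\prime)}$ itself is the $k$-th step and is not counted. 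Handling the degenerate edge cases (e.g. when $z$ or $a$ is near the boundary of $F$) is where the distinctness-modulo-$\Gamma$ hypothesis and the condition $a\in B_i'^+\setminus B_i'$ (rather than $a\in B_i'$) are exactly what is needed to keep the denominator from vanishing.
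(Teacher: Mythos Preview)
Your proposal is correct and follows essentially the same route as the paper: rewrite the quantity as $\left|\frac{\gamma\gamma_i a-\gamma a}{z-\gamma\gamma_i a}\right|$, trap both $\gamma a$ and $\gamma\gamma_i a$ in a common nested ball whose diameter is bounded via Proposition~\ref{statement:distance_gamma_a}, and bound the denominator below using $z\in F$. The paper streamlines the bookkeeping slightly by assuming WLOG that $h_k=\gamma_1$, observing directly that $h_k a,\,h_k\gamma_i a\in B_1^+$ (since $a,\gamma_i a\in F$), and then applying Proposition~\ref{statement:distance_gamma_a} to the length-$(k-1)$ word $h_1\cdots h_{k-1}$ acting on $P_1$; this makes the appearance of $k-1$ (rather than $k$) immediate and avoids the slight ambiguity in your labeling of the relevant ball as ``corresponding to $\gamma P_i$'' (it is really $h_1\cdots h_{k-1}P_1$).
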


\begin{proof}
Our choice of $a$ guarantees that both $a$ and $\gamma{}_ia$ are in $F$. Without loss of generality, we may assume that $h_k=\gamma{}_1$. Then, both $h_ka$ and $h_k\gamma{}_ia$ are in $B_1^+$. So both $\gamma{}a$ and $\gamma{}\gamma{}_ia$ lie in $h_1h_2\dotsb{}h_{k-1}B_1^+$, which is contained in some $B=B_j$ or $B_j'$. By Proposition \ref{statement:distance_gamma_a}, the points in $(\mathbb{P}^1)^{an}$ corresponding to the disks $h_1h_2\dotsb{}h_{k-1}B_1^+$ and $B^+$ have distance at least $c(k-1)$. This implies $\mathrm{diam}(h_1h_2\dotsb{}h_{k-1}B_1^+)\leq{}p^{-c(k-1)}\mathrm{diam}(B^+)$. Therefore, $|\gamma{}a-\gamma{}\gamma{}_ia|\leq{}p^{-c(k-1)}\mathrm{diam}(B^+)$. On the other hand, since $z\notin{}B$ and $\gamma{}\gamma{}_ia\in{}B$, we have $|z-\gamma{}\gamma{}_ia|\geq{}\mathrm{diam}(B^+)$.   This means that
$$\left|\frac{z-\gamma{}a}{z-\gamma{}\gamma{}_ia}-1\right|=\left|\frac{\gamma{}\gamma{}_ia-\gamma{}a}{z-\gamma{}\gamma{}_ia}\right|\leq\frac{p^{-c(k-1)}\text{diam}(B^+)}{\text{diam}(B^+)}=p^{-c(k-1)},$$
as claimed.
\end{proof}

We are now ready to prove our approximation theorem, which is a new result that allows one to determine the accuracy of an approximation of a ratio of theta functions.  It is similar in spirit to \cite[Theorem 6.10]{Ka}, which is an approximation result for a particular subclass of Schottky groups called Whittaker groups (see Subsection \ref{sectionwhittaker} of this paper for more details).  Our result is more general, as there are many Schottky groups that are not Whittaker.

\begin{theorem}\label{theorem:approximation_theorem}
Suppose that the given generators $\gamma{}_1,\dotsc{},\gamma{}_g$ of $\Gamma$ are in good position, with corresponding good fundamental domain $F$ and disks $B_1,\dotsc{},B_g,B_1',\dotsc{},B_g'$. Let $m\geq{}1$. In Algorithm \ref{algperiodmatrix}, if we choose $a\in{}B_i'^+\backslash{}B_i'$ and $z\in{}B_j'^+\backslash{}B_j'$ such that $a\neq{}z$, then
\begin{equation*}
\left|\frac{\Theta_m(a,\gamma_i(a);z)/\Theta_m(a,\gamma_i(a);\gamma_j(z))}{\Theta{}(a,\gamma_i(a);z)/\Theta{}(a,\gamma_i(a);\gamma_j(z))}-1\right| \leq{} p^{-cm},
\end{equation*}
where $c$ is the constant defined above.
\end{theorem}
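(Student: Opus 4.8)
The plan is to reduce the theorem to the pointwise estimate from Proposition \ref{statement:approximation_fraction} together with the location information from Proposition \ref{statement:distance_gamma_a}, and then exploit the non-Archimedean (multiplicative) nature of the error to avoid any accumulation. First I would unwind the ratio in the statement. Writing everything as a product over $\Gamma$, the quantity $\Theta(a,\gamma_i(a);z)/\Theta(a,\gamma_i(a);\gamma_j(z))$ is a product, over $\gamma\in\Gamma$, of the factors $\frac{(z-\gamma a)(\gamma_j z-\gamma\gamma_i a)}{(z-\gamma\gamma_i a)(\gamma_j z-\gamma a)}$, and similarly $\Theta_m/\Theta_m$ is the same product restricted to $\gamma\in\Gamma_m$. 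Hence the ratio in the theorem equals the product over $\gamma\in\Gamma\setminus\Gamma_m$ of those same factors, i.e. over all $\gamma$ whose reduced word has length $\geq m+1$. So it suffices to show that each such factor is $1+(\text{something of norm}\leq p^{-cm})$, since in a non-Archimedean field a (possibly infinite, convergent) product of terms of the form $1+\delta_\gamma$ with $|\delta_\gamma|\leq p^{-cm}$ is itself of the form $1+\delta$ with $|\delta|\leq p^{-cm}$ (the ultrametric inequality makes the partial products stay within $p^{-cm}$ of $1$, and convergence of $\Theta$ is already known from \cite{GP}).

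Next I would handle a single factor with $\gamma=h_1\cdots h_k$, $k\geq m+1\geq 2$. Split the factor as a product of two ratios: $\frac{z-\gamma a}{z-\gamma\gamma_i a}$ and $\frac{\gamma_j z-\gamma\gamma_i a}{\gamma_j z-\gamma a}$. For the first ratio I want to apply Proposition \ref{statement:approximation_fraction} with the point ``$z$'' there being our $z\in B_j'^+\setminus B_j'$; the hypotheses needed are $a\in B_i'^+\setminus B_i'$ (given), $a\neq z$ (given), that $a,z,\infty$ are distinct mod $\Gamma$, and crucially that $h_k\neq\gamma_i^{-1}$. The conclusion would be $\left|\frac{z-\gamma a}{z-\gamma\gamma_i a}-1\right|\leq p^{-c(k-1)}\leq p^{-cm}$. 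For the second ratio, note $\gamma_j z\in B_j$ by the good fundamental domain identities (since $z\in\mathbb{P}^1\setminus B_j'^+$ implies $\gamma_j z\in B_j$); one applies the same circle of ideas — $\gamma a$ and $\gamma\gamma_i a$ both lie in a common disk $h_1\cdots h_{k-1}B_1^+$ (taking $h_k=\gamma_1$ WLOG) of diameter $\leq p^{-c(k-1)}\mathrm{diam}(B^+)$ by Proposition \ref{statement:distance_gamma_a}, while $\gamma_j z$ lies outside the enclosing disk $B=B_j$ or $B_j'$, so $|\gamma_j z-\gamma a|\geq \mathrm{diam}(B^+)$ and $|\gamma_j z - \gamma\gamma_i a|\geq\mathrm{diam}(B^+)$, giving $\left|\frac{\gamma_j z-\gamma\gamma_i a}{\gamma_j z - \gamma a}-1\right|=\left|\frac{\gamma a-\gamma\gamma_i a}{\gamma_j z-\gamma a}\right|\leq p^{-c(k-1)}\leq p^{-cm}$. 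Multiplying the two ratios, each within $p^{-cm}$ of $1$, keeps the product within $p^{-cm}$ of $1$.

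The last point to address is the case analysis hidden in ``$h_k\neq\gamma_i^{-1}$'': if $h_k=\gamma_i^{-1}$ then Proposition \ref{statement:approximation_fraction} does not apply directly, but then the factor should be grouped differently — the offending $\gamma$'s are exactly those for which $\gamma\gamma_i$ is \emph{shorter} than $\gamma$, and pairing $\gamma\leftrightarrow\gamma\gamma_i$ (reindexing the infinite product, which is legitimate once convergence is known) converts the problematic factors into well-behaved ones; alternatively one observes that for such $\gamma$ the word $\gamma\gamma_i$ still has length $\geq k-1\geq m$ and one estimates $\frac{z-\gamma a}{z-\gamma\gamma_i a}$ by swapping the roles, writing it as $1\big/\frac{z-\gamma\gamma_i a}{z-\gamma a}$ and applying the proposition to $\gamma\gamma_i$ with $h_k'=\gamma_i\neq(\gamma_i)$ — wait, one must be careful here, and this bookkeeping is where I expect the real friction. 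I anticipate the main obstacle is precisely making this reindexing/pairing rigorous and verifying that every factor of the tail product over $\Gamma\setminus\Gamma_m$ — including the edge cases where multiplication by $\gamma_i$ or by $\gamma_j$ shortens a reduced word, and the cases $k=m+1$ versus $k$ large — genuinely contributes an error of norm $\leq p^{-cm}$ rather than merely $\leq p^{-c(k-1)}$ for the wrong $k$. Once the tail is shown to be a convergent product of factors $1+O(p^{-cm})$, the ultrametric inequality closes the argument immediately.
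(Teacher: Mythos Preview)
Your overall approach --- tail product over $\Gamma\setminus\Gamma_m$ of cross-ratio factors, each shown to be $1+O(p^{-cm})$ --- is exactly the paper's. But you miss the two simplifications that make the paper's proof only a few lines, and there is one small slip.

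First, your claim $\gamma_j z\in B_j$ is wrong: since $z\in B_j'^+\setminus B_j'$, in particular $z\in B_j'^+$, the implication ``$z\in\mathbb{P}^1\setminus B_j'^+\Rightarrow\gamma_j z\in B_j$'' does not apply. The correct computation is $\gamma_j(B_j'^+\setminus B_j')=B_j^+\setminus B_j\subset F$. This is the entire point of the special choice of $z$: the paper opens its proof with ``Our choice of $z$ guarantees that both $z$ and $\gamma_j z$ are in $F$,'' so Proposition \ref{statement:approximation_fraction} applies verbatim to the second ratio as well, and no separate argument for $\gamma_j z$ is needed.

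Second, the case $h_k=\gamma_i^{-1}$ you flag as the main obstacle is not one. Look again at the proof of Proposition \ref{statement:approximation_fraction}: the hypothesis $h_k\neq\gamma_i^{-1}$ is never actually used. All that is needed is that both $a$ and $\gamma_i a$ lie in $F$, so that $h_k$ --- whatever it is --- carries both into a single closed ball $B_\ell^+$ or $B_\ell'^+$, after which Proposition \ref{statement:distance_gamma_a} applies to $h_1\cdots h_{k-1}$ (whose last letter is $\neq h_k^{-1}$ by reducedness). The choice $a\in B_i'^+\setminus B_i'$ forces $\gamma_i a\in B_i^+\setminus B_i\subset F$, so this holds in every case; no reindexing or pairing is required, and the paper simply says the result ``follows directly from Proposition \ref{statement:approximation_fraction}.'' The one genuine point you do not address is the standing assumption $\infty\in F^\circ$ behind those propositions; the paper removes it in one line by observing that each tail factor is a cross ratio, hence $PGL(2,K)$-invariant, so one may first move $\infty$ into $F^\circ$ by a projective change of coordinates --- a trick your split into two non-invariant ratios would lose.
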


\begin{proof}
Our choice of $z$ guarantees that both $z$ and $\gamma{}_jz$ are in $F$. Thus, if $\infty{}$ lies in the interior of $F$, then this theorem follows directly from Proposition \ref{statement:approximation_fraction}. The last obstacle is to remove the assumption on $\infty{}$. We observe that $Q_{ij}$ is a product of cross ratios:
\begin{equation*}
\frac{\Theta{}(a,\gamma_ia;z)}{\Theta(a,\gamma_ia;\gamma_jz)} = \prod_{\gamma{}\in{}\Gamma{}}\frac{(z-\gamma{}a)(\gamma{}_jz-\gamma{}\gamma{}_ia)}{(z-\gamma{}\gamma{}_ia)(\gamma{}_jz-\gamma{}a)}.
\end{equation*}
Therefore, each term is invariant under any projective automorphism of $\mathbb{P}^1$. Under such an automorphism, any point in the interior of $F$ can be sent to $\infty{}$.
\end{proof}

As a special case of this approximation theorem, suppose that we want to compute the period matrix for the tropical Jacobian of $C$, which is the matrix $(\mathrm{val}(Q_{ij}))_{g\times{}g}$. We need only to compute $Q_{ij}$ up to relative precision $O(1)$. Thus, setting $m=0$ suffices. In this case, each of the products $\Theta_m(a,\gamma_i(a);z)$, $\Theta_m(a,\gamma_i(a);\gamma_j(z))$ has only one term.

\begin{remark}\label{remark:choosing_m}  If we wish to use Algorithm \ref{algperiodmatrix} to compute a period matrix $Q$ with relative precision $O(p^n)$ (meaning that we want $p^{-cm}\leq p^{-n}$ in Theorem \ref{theorem:approximation_theorem}), we must first compute $c$.  As above, $c$ is defined to be the minimum distance between pairs of the points $P_1,\ldots ,P_g,P_1',\ldots,P_g'\in (\pro^{1})^{an}$ corresponding to the balls $B_1,\ldots, B_g,B_1',\ldots,B_g'$ that characterize our good fundamental domain.  Once we have computed $c$ (perhaps by finding a good fundamental domain using the methods of Section \ref{goodpositionalgorithmsection}), then by Theorem \ref{theorem:approximation_theorem}  we must choose $m$ such that $cm\geq n$, so $m=\lceil n/c\rceil$ will suffice.
\end{remark}

\begin{example}\label{example:period_matrix}
(1) Let $\Gamma{}$ be the Schottky group in Example \ref{example:good_fundamental_domain}(1). Choose the same good fundamental domain, with  $B_1=B(4,1/9)$, $B_1'=B(1,1/9)$, $B_2=B(5,1/9)$, and $B_2'=B(2,1/9)$.  The four balls correspond to four points in the tree $(\pro^1)^{an}$. We need to find the pairwise distances between the points $P_1$, $P'_1$, $P_2$, and $P'_2$ in $(\pro^1)^{an}$. Since the smallest ball containing both $B^+_1$ and $B'^+_1$ is $B^+(1,1/3)$, both $P_1$ and $P_1'$ are distance $\val((1/3)/(1/9))=\val(3)=1$ from the point corresponding to $B^+(1,1/3)$, so $P_1$ and $P_1'$ are distance $2$ from one another.  Similar calculations give distances of $2$ between $P_2$ and $P_2'$, and of $4$ between $P_1$ or $P_1'$ and $P_2$ or $P_2'$.  In fact, the distance between $P_i$ and $P_i'$ equals the difference in the valuations of the two eigenvalues of $\gamma{}_i$. This allows us to construct the subtree of $(\mathbb{P}^1)^{an}$ spanned by $P_1,P_2,P_1',P_2'$ as illustrated in Figure \ref{figure:genus2_example1}. The minimum distance between them is $c=2$. To approximate $Q_{11}$, we take $a=10$ and $z=19$. To compute $Q$ up to relative precision $O(p^{10})$, we need $2m\geq 10$ (this is the equation $cm\geq n$ from Remark \ref{remark:choosing_m}), so choosing $m=5$ works. The output of the algorithm is $Q_{11}=(\dotsc{}220200000100)_3$. Similarly, we can get the other entries in the matrix $Q$:

\begin{equation*}
Q =
\begin{bmatrix}
(\dotsc{}220200000100)_3 & (\dotsc{}0101010101)_3 \\
(\dotsc{}0101010101)_3 & (\dotsc{}220200000100)_3
\end{bmatrix}.
\end{equation*}

(2) Let $\Gamma{}$ be the Schottky group in Example \ref{example:good_fundamental_domain}(2). Choose the same good fundamental domain. Again, we need $m=5$ for relative precision $O(p^{10})$. The algorithm outputs
\begin{equation*}
Q =
\begin{bmatrix}
(\dotsc{}12010021010000)_3 & (\dotsc{}002000212200)_3 \\
(\dotsc{}002000212200)_3 & (\dotsc{}12010021010000)_3
\end{bmatrix}.
\end{equation*}

(3) Let $\Gamma{}$ be the Schottky group in Example \ref{example:good_fundamental_domain}(3). Choose the same good fundamental domain. The minimum distance between the corresponding points in $(\pro^1)^{an}$ is $2$, so we may take $m=10/2=5$ to have relative precision up to $O(p^{10})$.  Our algorithm outputs
\begin{equation*}
Q =
\begin{bmatrix}
(\dotsc{1 1 2 0 1 0 0 0 0 1 0 0 0 0})_3 & (\dotsc{1 2 0 2 0 0 2 2 2 1 0})_3 &(\dotsc{2 0 0 2 0 0 0 2 1 2 0})_3 \\
(\dotsc{1 2 0 2 0 0 2 2 2 1 0})_3 & (\dotsc{1 0 1 0 1 0 1 0 0 1 0 0 0 0})_3 &(\dotsc{0 2 0 2 0 1 1 2 0 . 1})_3  \\
(\dotsc{2 0 0 2 0 0 0 2 1 2 0})_3 & (\dotsc{0 2 0 2 0 1 1 2 0 . 1})_3 &(\dotsc{2 1 0 1 0 1 0 0 0 1 0 0 0 0})_3 
\end{bmatrix}.
\end{equation*}

(4) Let $K=\mathbb{C}_3$ and $\Gamma{}$ be the group generated by
\begin{equation*}
\gamma{}_1 = \begin{bmatrix} -5 & 32 \\ -8 & 35 \end{bmatrix}, \gamma{}_2 = \gamma{}_1^{100}\begin{bmatrix} -13 & 80 \\ -8 & 43 \end{bmatrix}
\end{equation*}
The group is the same as in part (1) of this set of examples, but the generators are not in good position. To achieve the same precision, $m$ needs to be up to $100$ times greater than in part (1), because the $\gamma{}_2$ in part (1) now has a reduced word of length $101$. Since the running time grows exponentially in $m$, it is not feasible to approximate $Q$ using Algorithm \ref{algperiodmatrix} with these generators as input.
\end{example}

\subsection{The Abstract Tropical Curve}

This subsection deals with the problem of constructing the corresponding abstract tropical curve of a Schottky group over $K$, together with some data on its homotopy group. This is a relatively easy task, assuming that the given generators $\gamma{}_1,\dotsc{},\gamma{}_g$ are in good position, and that we are also given a fundamental domain $F=\mathbb{P}^1\backslash{}(B_1\cap{}\dotsb{}\cap{}B_g\cap{}B_1'\cap{}\dotsb{}\cap{}B_g')$. Without loss of generality, we may assume that $\infty{}\in{}F^{\circ{}}$. Let $P_1,\dotsc{},P_g,P_1',\dotsc{},P_g'\in (\pro^1)^{an}$ be the Gaussian points of the disks $B_1,\dotsc{},B_g,B_1',\dotsc{},B_g'$.

Let $R_g$ be the \emph{rose graph} on $g$ leaves (with one vertex and $g$ loops), and let $r_1,\ldots,r_g$ be the loops.  A homotopy equivalence $h:R^g\rightarrow G$ must map $r_1,\ldots,r_g$ to $g$ loops of $G$ that generate $\pi_1(G)$, so to specify $h$ it will suffice to label $g$ such loops of $G$ with $\{s_1,\ldots, s_g\}$ and orientations.  It is for this reason that we call $h$ a \emph{marking of $G$}.

\begin{algorithm}[Abstract Tropical Curve Construction]\label{algorithm:abstract_tropical_curve}\quad{}
{\rm
\begin{algorithmic}[1]

\Require Matrices $\gamma_1,\ldots \gamma_g\in\Q_p^{2\times 2}$ generating a Schottky group $\Gamma$, together with a good fundamental domain $F=\mathbb{P}^1\backslash{}(B_1\cap{}\dotsb{}\cap{}B_g\cap{}B_1'\cap{}\dotsb{}\cap{}B_g'$).
\Ensure The triple $(G,\ell,h)$ with  $(G,\ell)$ the abstract tropical curve as a metric graph with a $h$ a marking  presented as $g$ labelled oriented loops of $G$.

\State Construct the subtree in $(\mathbb{P}^1)^{an}$ spanned by $P_1,\dotsc{},P_g,P_1',\dotsc{},P_g'$, including lengths.
\State Label the unique shortest path from $P_i$ to $P_i'$ as $s_i$, remembering orientation.
\State Identify each $P_i$ with $P_i'$, and declare the length of the new edge containing $P_i=P_i'$ to be the sum of the lengths of the edges that were joined to form it.
\State Define $h$ by the labels $s_i$, with each $s_i$ now an oriented loop.
\State \Return the resulting labeled metric graph $(G,\ell, h)$.

\end{algorithmic}
}
\end{algorithm}

\begin{proof}
The proof is essentially given in \cite[I 4.3]{GP}.
\end{proof}

\begin{remark}  It's worth noting that this algorithm can be done by hand if a good fundamental domain is known.  If $P_1,P_2\in(\pro^1)^{an}$ are the points corresponding to the disjoint closed balls $B(a_1,r_1)^+$ and $B(a_2,r_2)^+$, then the distance between $P_1$ and $P_2$ is just the sum of their distances from $P_3$ corresponding to $B(a_3,r_3)^+$, where $B(a_3,r_3)^+$ is the smallest closed  ball containing both $a_1$ and $a_2$.  The distance between $P_i$ and $P_3$ is just $\val(r_3/r_i)$ for $i=1,2$.  Once all pairwise distances are known, constructing $(G,\ell)$ is simple.  Finding $h$ is simply a matter of drawing the orientation on the loops formed by each pair $(P_i,P_i')$ and labeling that loop $s_i$.  This process is illustrated three times in Example \ref{example:abstract_tropical_curve}.
\end{remark}

\begin{remark}\label{remark:teichmuller} The space parameterizing labelled metric graphs $(G,\ell,h)$  (identifying those with markings that are homotopy equivalent) is called \emph{Outer space}, and is denoted $X_g$.  It is shown in \cite{CMV} that $X_g$ sits inside tropical Teichm\"{u}ller space as a dense open set, so Algorithm \ref{algorithm:abstract_tropical_curve} can be viewed as computing a point in  tropical Teichm\"{u}ller space.
\end{remark}

\begin{example}\label{example:abstract_tropical_curve} 
(1) Let $\Gamma{}$ be the Schottky group in Example \ref{example:good_fundamental_domain}(1). Choose the same good fundamental domain, with  $B_1=B(4,1/9)$, $B_1'=B(1,1/9)$, $B_2=B(5,1/9)$, and $B_2'=B(2,1/9)$. We have constructed the subtree of $(\mathbb{P}^1)^{an}$ spanned by $P_1,P_2,P_1',P_2'$ as illustrated in Figure \ref{figure:genus2_example1} in Example \ref{example:period_matrix}(1). After identifying $P_1$ with $P_1'$ and $P_2$ with $P_2'$, we get the ``dumbbell" graph shown in Figure \ref{figure:genus2_example1}, with both loops having length $2$ and the connecting edge having length $2$.

(2) Let $\Gamma{}$ be the Schottky group in Example \ref{example:good_fundamental_domain}(2). Choose the same good fundamental domain. The subtree of $(\mathbb{P}^1)^{an}$ spanned by $P_1,P_2,P_1',P_2'$ is illustrated in Figure \ref{figure:genus2_example2}. After identifying $P_1$ with $P_1'$ and $P_2$ with $P_2'$, we get the ``theta" graph shown in Figure \ref{figure:genus2_example2}, with two edges of length $2$ and one edge of length $2$.

(3) Let $\Gamma$ be the Schottky group in Example  \ref{example:good_fundamental_domain}(3).
The subtree of $(\mathbb{P}^1)^{an}$ spanned by $P_1,P_2,P_3,P_1',P_2',P_3'$ is illustrated in Figure \ref{figure:genus3_example1}. After identifying $P_1$ with $P_1'$, $P_2$ with $P_2'$ and $P_3$ with $P_3'$, we get the ``honeycomb" graph shown in Figure \ref{figure:genus3_example1}, with interior edges of length $1$ and exterior edges of length $2$.
\end{example}

\begin{figure}
\caption{The tree in Example \ref{example:abstract_tropical_curve}(1), and the abstract tropical curve.}
\centering
\includegraphics[scale=1]{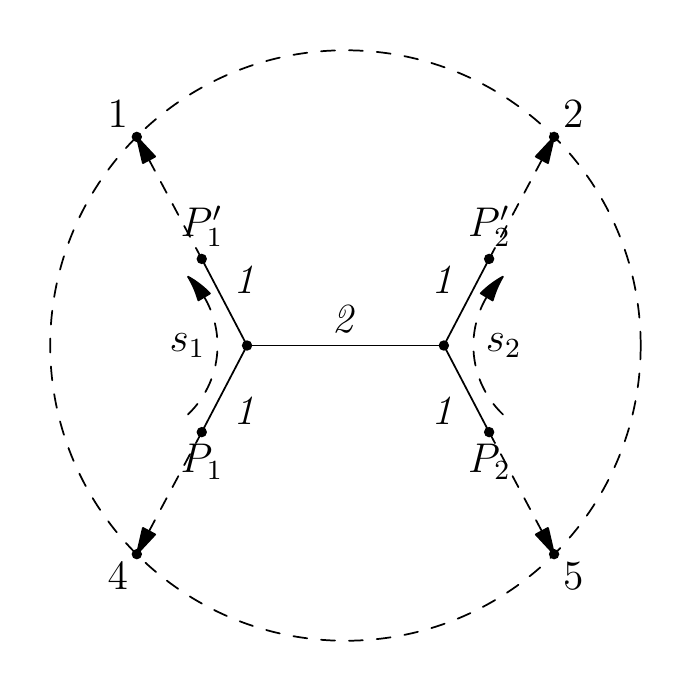}
\includegraphics[scale=1]{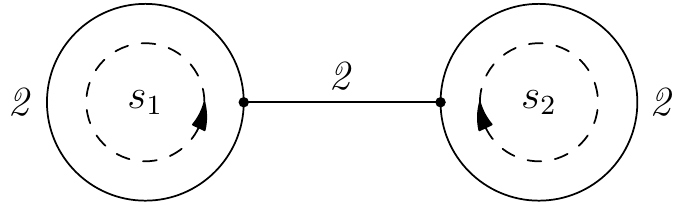}
\label{figure:genus2_example1}
\end{figure}

\begin{figure}
\caption{The tree in Example \ref{example:abstract_tropical_curve}(2), and the abstract tropical curve.}
\centering
\includegraphics[scale=1]{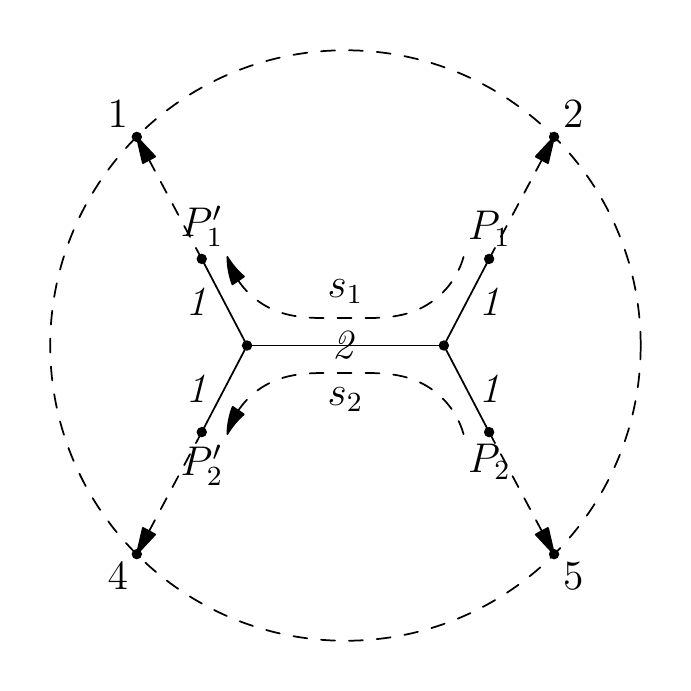}
\includegraphics[scale=1]{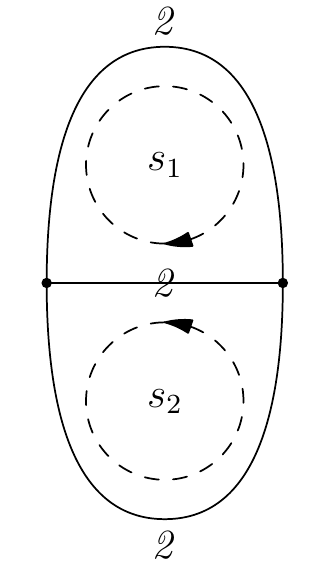}
\label{figure:genus2_example2}
\end{figure}

\begin{figure}
\caption{The tree in Example \ref{example:abstract_tropical_curve}(3), and the abstract tropical curve.}
\centering
\includegraphics[scale=0.8]{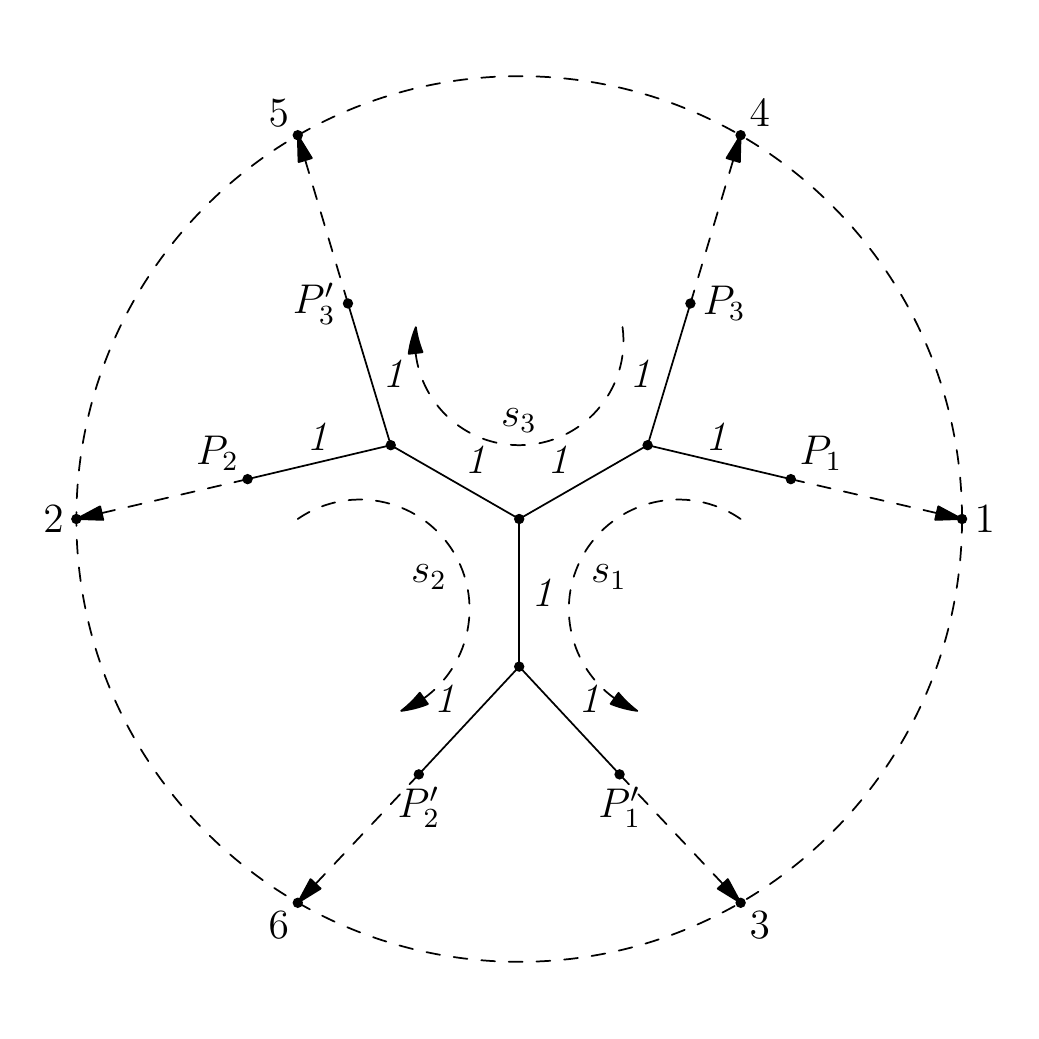}
\includegraphics[scale=1]{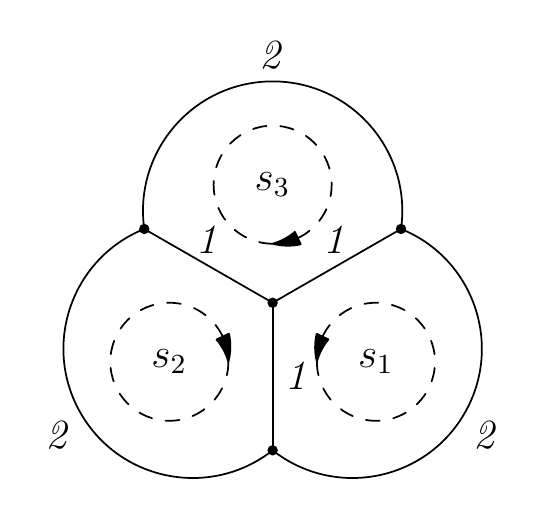}
\label{figure:genus3_example1}
\end{figure}

\subsection{Canonical Embeddings}

From \cite[VI.4]{GP}, we have that
\begin{equation*}
\omega{}_i(z) := w_i(z)dz = \frac{u_{\gamma{}_i}'(z)}{u_{\gamma{}_i}(z)}dz
\end{equation*}
are $g$ linearly independent analytic differentials on $\Omega$ that are invariant under the action of $\Gamma{}$. Therefore, they define $g$ linearly independent differentials on $C=\Omega/\Gamma{}$. Gerritzen and van der Put \cite[VI.4]{GP} also state that these form a basis of the space of $\Gamma{}$-invariant analytic differentials. Since the space of algebraic differentials on $C$ has dimension $g$, it must be generated by these $g$ differentials. Therefore, the canonical embedding has the following form:
\begin{align*}
C&\to{}\mathbb{P}^{g-1},\\
z&\mapsto{}\left(\frac{u_{\gamma{}_1}'(z)}{u_{\gamma{}_1}(z)}:\dotsc{}:\frac{u_{\gamma{}_g}'(z)}{u_{\gamma{}_g}(z)}\right).
\end{align*}

It therefore suffices to approximate the derivative $u_{\alpha{}}'(z)$. A na\"{i}ve approach is to consider the approximation
\begin{equation*}
u_{\alpha{}}'(z)\approx{}\frac{u_{\alpha{}}(z+h)-u_{\alpha{}}(z)}{h}.
\end{equation*}
We can do better by taking advantage of the product form of $u_{\alpha{}}(z)$:
\begin{align*}
u_{\alpha{}}'(z) &= \frac{d}{dz}\prod_{\gamma{}\in{}\Gamma{}}\frac{z-\gamma{}a}{z-\gamma{}\alpha{}a} \\
&= \sum_{\gamma{}\in{}\Gamma{}}\left(\frac{d}{dz}\left(\frac{z-\gamma{}a}{z-\gamma{}\alpha{}a}\right)\prod_{\gamma{}'\in{}\Gamma{},\gamma{}'\neq{}\gamma{}}\frac{z-\gamma{}'a}{z-\gamma{}'\alpha{}a}\right) \\
&= u_{\alpha{}}(z) \sum_{\gamma{}\in{}\Gamma{}}\frac{d}{dz}\left(\frac{z-\gamma{}a}{z-\gamma{}\alpha{}a}\right)\left(\frac{z-\gamma{}a}{z-\gamma{}\alpha{}a}\right)^{-1} \\
&= u_{\alpha{}}(z) \sum_{\gamma{}\in{}\Gamma{}}\frac{\gamma{}a-\gamma{}\alpha{}a}{(z-\gamma{}a)(z-\gamma{}\alpha{}a)}.
\end{align*}

\begin{algorithm}[Canonical Embedding]\label{algcanonical}\quad{}
{\rm
\begin{algorithmic}[1]

\Require Matrices $\gamma_1,\ldots ,\gamma_g\in\Q_p^{2\times 2}$ generating a Schottky group $\Gamma$ in good position, an element $z\in{}K$, and an integer $n$ to determine precision.
\Ensure An approximation for the image of $z$ under \emph{the} canonical embedding $\Omega{}/\Gamma{}\to{}\mathbb{P}^{g-1}$ determined by the choice of generators.

\State Based on $n$, choose a suitable positive integer $m$ as described in Remark \ref{remark:choosing_m_canonical}.
\For {$i=1$ to $g$}
\State Choose a suitable element $a\in{}K$ as described in Proposition \ref{proposition:canonical_approximation}.
\State Compute
\begin{equation*}
w_i = \sum_{\gamma{}\in{}\Gamma{}_m}\frac{\gamma{}a-\gamma{}\gamma{}_ia}{(z-\gamma{}a)(z-\gamma{}\gamma{}_ia)}.
\end{equation*}
\EndFor
\State \Return $(w_1:\dotsb{}:w_g)$.
\end{algorithmic}
}
\end{algorithm}

With appropriate choice of $a$, we can provide a lower bound on the precision of the result in terms of $m$. Fortunately, we can choose different values of $a$ to approximate
\begin{equation*}
\sum_{\gamma{}\in{}\Gamma{}}\frac{\gamma{}a-\gamma{}\gamma{}_ia}{(z-\gamma{}a)(z-\gamma{}\gamma{}_ia)}
\end{equation*}
for different $\gamma{}_i$. As in Proposition \ref{statement:approximation_fraction}, we choose $a\in{}B_i'^+\backslash{}B_i'$ to ensure that both $a$ and $\gamma{}_ia$ are in $F$.

\begin{proposition}\label{proposition:canonical_approximation}
If we choose $a\in{}B_i'^+\backslash{}B_i'$ in Algorithm \ref{algcanonical}, and assuming $z\in{}F$, then
\begin{equation*}
\left|\sum_{\gamma{}\in{}\Gamma{}_m}\frac{\gamma{}a-\gamma{}\gamma{}_ia}{(z-\gamma{}a)(z-\gamma{}\gamma{}_ia)}-\frac{u_{\gamma{}_i}'(z)}{u_{\gamma{}_i}(z)}\right|\leq{}p^{-mc-\log_p(d)},
\end{equation*}
where $c$ is the minimum pairwise distance between $P_1,\dotsc{},P_g,P_1',\dotsc{},P_g'$, and $d$ is the minimum diameter of $B_1,\dotsc{},B_g,B_1',\dotsc{},B_g'$.
\end{proposition}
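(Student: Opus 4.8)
The plan is to bound the tail of the series term-by-term, exactly mirroring the strategy of Proposition \ref{statement:approximation_fraction}. Writing
\begin{equation*}
\frac{u_{\gamma{}_i}'(z)}{u_{\gamma{}_i}(z)} = \sum_{\gamma{}\in{}\Gamma{}}\frac{\gamma{}a-\gamma{}\gamma{}_ia}{(z-\gamma{}a)(z-\gamma{}\gamma{}_ia)},
\end{equation*}
the error of the $m$-th partial sum is the sum over $\gamma{}\in{}\Gamma{}\setminus{}\Gamma{}_m$ of these terms, so by the ultrametric inequality it suffices to show that each term with reduced word length $k>m$ has absolute value at most $p^{-kc-\log_p(d)}\le p^{-mc-\log_p(d)}$ (the monotonicity in $k$ then lets us replace $k$ by $m$, since $c>0$). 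So the real content is a per-term estimate: for $\gamma{}$ with reduced word $h_1\dotsb{}h_k$ of length $k\ge 1$,
\begin{equation*}
\left|\frac{\gamma{}a-\gamma{}\gamma{}_ia}{(z-\gamma{}a)(z-\gamma{}\gamma{}_ia)}\right| \le p^{-kc-\log_p(d)}.
\end{equation*}

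First I would handle the numerator. As in Proposition \ref{statement:approximation_fraction}, the choice $a\in{}B_i'^+\setminus{}B_i'$ guarantees $a,\gamma{}_ia\in{}F$, so Lemma \ref{statement:location_of_gamma_b} puts both $\gamma{}a$ and $\gamma{}\gamma{}_ia$ inside a single translate $h_1\dotsb{}h_{k-1}(h_kB_1^+)$ — here I take $h_k=\gamma{}_1$ without loss of generality — which sits inside some $B=B_j$ or $B_j'$, and by Proposition \ref{statement:distance_gamma_a} the Gaussian points of $h_1\dotsb{}h_{k-1}B_1^+$ and of $B^+$ are at distance $\ge c(k-1)$ in $(\mathbb{P}^1)^{an}$. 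Hence $\mathrm{diam}(h_1\dotsb{}h_{k-1}B_1^+)\le p^{-c(k-1)}\mathrm{diam}(B^+)$, giving $|\gamma{}a-\gamma{}\gamma{}_ia|\le p^{-c(k-1)}\mathrm{diam}(B^+)$. For the denominator, since $z\in{}F$ we have $z\notin{}B$ while $\gamma{}a,\gamma{}\gamma{}_ia\in{}B$, so each of $|z-\gamma{}a|$ and $|z-\gamma{}\gamma{}_ia|$ is at least $\mathrm{diam}(B^+)$. Putting these together,
\begin{equation*}
\left|\frac{\gamma{}a-\gamma{}\gamma{}_ia}{(z-\gamma{}a)(z-\gamma{}\gamma{}_ia)}\right| \le \frac{p^{-c(k-1)}\mathrm{diam}(B^+)}{\mathrm{diam}(B^+)^2} = \frac{p^{-c(k-1)}}{\mathrm{diam}(B^+)}.
\end{equation*}
Now $\mathrm{diam}(B^+)$ is the diameter of one of the $2g$ balls $B_1,\dotsc{},B_g,B_1',\dotsc{},B_g'$ — actually the diameter of the image ball is bounded below by the minimum diameter $d$ among all of these, after accounting for the $p^{-c(k-1)}$ contraction; one gets $\mathrm{diam}(B^+)\ge d$ directly since $B^+$ itself is one of the $2g$ characterizing balls — so $1/\mathrm{diam}(B^+)\le 1/d = p^{\log_p(1/d)} = p^{-\log_p(d)}$. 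Also $p^{-c(k-1)}\le p^{-c(k-1)}\cdot p^{-c}\cdot p^{c} $ — more simply, since $k>m$ we can afford one factor of $c$: actually I should be slightly careful and note the term is $\le p^{-ck}\cdot p^{c}/d$, but summing over $k>m$ and using $c\ge 1$ (distances in $(\mathbb{P}^1)^{an}$ coming from $|K^\times|=p^{\mathbb{Z}}$ for $K=\mathbb{Q}_p$) still yields the bound $p^{-mc-\log_p(d)}$; the cleanest route is to observe directly from Proposition \ref{statement:distance_gamma_a} that the translated ball has diameter $\le p^{-ck}\mathrm{diam}(\text{outer ball})$ and redo the denominator bound relative to the outer ball of diameter $\ge d$, giving exactly $p^{-ck}/d\le p^{-mc}/d$ for $k\ge m+1\ge$ the relevant range.

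The main obstacle I anticipate is bookkeeping the diameters correctly: one must track which ball $B^+$ the image $h_1\dotsb{}h_{k-1}B_1^+$ lands in, relate its diameter to the universal lower bound $d$, and make sure the single "lost" factor (the difference between $c(k-1)$ and $ck$, and between the diameter of $B^+$ and $d$) is absorbed so that the final exponent is exactly $-mc-\log_p(d)$ rather than something weaker. A secondary point is the edge case $k\le m$ versus $k=0$ (the identity term): for $k=0$ the term is $(a-\gamma{}_ia)/((z-a)(z-\gamma{}_ia))$, which is included in $\Theta{}_m$ for all $m\ge 0$, so it never appears in the error and need not be estimated. Once the per-term bound is in place, the ultrametric inequality finishes the proof immediately, since the supremum over $k>m$ of $p^{-ck-\log_p(d)}$ is attained at $k=m+1$ and is $\le p^{-mc-\log_p(d)}$.
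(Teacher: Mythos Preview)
Your approach is the same as the paper's: bound each tail term using the estimates from Proposition~\ref{statement:approximation_fraction}, then invoke the ultrametric inequality. The paper's proof is just three lines: from $|\gamma a-\gamma\gamma_i a|\le p^{-(k-1)c}\,\mathrm{diam}(B^+)$ and $|z-\gamma a|,|z-\gamma\gamma_i a|\ge \mathrm{diam}(B^+)$ one gets the per-term bound $p^{-(k-1)c}/\mathrm{diam}(B^+)\le p^{-(k-1)c}/d$, and since the error involves only $k\ge m+1$, each such term is $\le p^{-mc}/d$.

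The confusion in your middle paragraph is self-inflicted: you set the per-term target as $p^{-kc-\log_p(d)}$, which is one factor of $p^{-c}$ stronger than what the argument actually yields, namely $p^{-(k-1)c-\log_p(d)}$. You then try to recover that missing factor via ``$c\ge 1$'' or a ``cleanest route'' through Proposition~\ref{statement:distance_gamma_a}. Neither is needed, and the latter does not quite work as you describe: the ball containing both $\gamma a$ and $\gamma\gamma_i a$ is $h_1\dotsb h_{k-1}B_1^+$, whose Gaussian point is $h_1\dotsb h_{k-1}P_1$, and Proposition~\ref{statement:distance_gamma_a} applied to the length-$(k-1)$ word gives distance $\ge (k-1)c$, not $kc$. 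The fix is simply to aim for the weaker per-term bound $p^{-(k-1)c}/d$ from the start and observe that $k\ge m+1$ gives $k-1\ge m$; no assumption like $c\ge 1$ is required.
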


\begin{proof}
Let $\gamma{}\in{}\Gamma{}$ have reduced word $\gamma{}=h_1h_2\dotsb{}h_k$. We have seen in the proof of Proposition \ref{statement:approximation_fraction} that $|\gamma{}a-\gamma{}\gamma{}_ia|\leq{}p^{-(k-1)c}\mathrm{diam}(B^+)$, $|z-\gamma{}a|\geq{}\mathrm{diam}(B^+)$ and $|z-\gamma{}\gamma{}_ia|\geq{}\mathrm{diam}(B^+)$, where $B$ is one of $B_1,\dotsc{},B_g,B_1',\dotsc{},B_g'$. Thus,
\begin{align*}
\left|\frac{\gamma{}a-\gamma{}\gamma{}_ia}{(z-\gamma{}a)(z-\gamma{}\gamma{}_ia)}\right| &\leq{} \frac{p^{-(k-1)c}\mathrm{diam}(B^+)}{\mathrm{diam}(B^+)^2} \\
&\leq{} p^{-(k-1)c}d^{-1}
\\&=p^{-(k-1)c-\log_p(d)}.
\end{align*}
Since the difference between our approximation and the true value is the sum over terms where $\gamma{}$ has reduced words of length $\geq{}m+1$, we conclude that the error has absolute value at most $p^{-mc-\log_p(d)}$.
\end{proof}

In the last proposition, we assumed $z\in{}F$. If $z\notin{}F$, we can do an extra step and replace $z$ by some $\gamma{}z$ such that $\gamma{}z\in{}F$, with the help of Subroutine \ref{algorithm:reduction_fundamental_domain}. This step does not change the end result because the theta functions are invariant under the action of $\Gamma{}$.

\begin{remark}\label{remark:choosing_m_canonical}  If we wish to use Algorithm \ref{algcanonical} to compute a period matrix $Q$ with accuracy up to the $n^{th}$ $p$-adic digit, we must first compute $c$ and $d$. Recall that $c$ is defined to be the minimum distance between pairs of the points $P_1,\ldots ,P_g,P_1',\ldots,P_g'\in (\pro^{1})^{an}$ corresponding to the balls $B^+_1,\ldots, B^+_g,B'^+_1\ldots,B'^+_g$ that characterize our good fundamental domain, and $d$ is the minimum diameter of $B_1,\ldots, B_g,B_1',\ldots,B_g'$.  Once we have computed $c$ and $d$, then by Proposition \ref{proposition:canonical_approximation}  we must choose $m$ such that $p^{-mc}d^{-1}\leq p^{-n}$.  We could also think of it as choosing $m$ such that $mc+\log_p(d)\geq n$.
\end{remark}

\begin{remark}  As was the case with Algorithm \ref{algperiodmatrix}, we may run Algorithm \ref{algcanonical} even if the input generators are not in good position, and it will approximate images of points in the canonical embedding.  However, we will not have control over the rate of convergence, which will in general be very slow.
\end{remark}

\begin{example}\label{example:canonical_embedding}
Let $\Gamma{}$ be the Schottky group in Example \ref{example:good_fundamental_domain}(3). Choose the same good fundamental domain. We will compute the image of the field element $17$ under the canonical embedding (we have chosen $17$ as it is in $\Omega$ for this particular $\Gamma$). The minimum diameter is $d=1/9$, and the minimum distance is $c=2$. To get absolute precision to the order of $p^{-10}$, we need $p^{-mc}d^{-1}\leq{}p^{-10}$, i.e. $m\geq{}6$. Applying Algorithm \ref{algcanonical} with $m=6$ gives us the following point in $\mathbb{P}^2$:
\begin{equation*}
((\dotsc{}2100012121)_3:(\dotsc{}2211022001.1)_3:(\dotsc{}2221222111.1)_3).
\end{equation*}

This point lies on the canonical embedding of the genus $3$ Mumford curve $\Omega/\Gamma$.  Any genus $3$ curve is either a hyperelliptic curve or a smooth plane quartic curve.  However, it is impossible for a hyperelliptic curve to have the skeleton in Figure \ref{figure:genus3_example1} (see \cite[Theorem 4.15]{Ch}), so $\Omega/\Gamma$ must be a smooth plane quartic curve. Its equation has the form
\begin{align*}
&C_1x^4+C_2x^3y+C_3x^3z+C_4x^2y^2+C_5x^2yz+C_6x^2z^2 \\
&\quad{}C_7xy^3+C_8xy^2z+C_9xyz^2+C_{10}xz^3+C_{11}y^4+C_{12}y^3z+C_{13}y^2z^2+C_{14}yz^3+C_{15}z^4.
\end{align*}
Using linear algebra over $\mathbb{Q}_3$, we can solve for its $15$ coefficients by computing $14$ points on the curve and plugging them into the equation. The result is
\begin{equation*}
\begin{array}{lll}
C_1=1, & C_2=(\dotsc{}11101)_3, & C_3=(\dotsc{}00211)_3, \\
C_4=(\dotsc{}1020.2)_3, & C_5=(\dotsc{}110.21)_3, & C_6=(\dotsc{}1002.1)_3, \\
C_7=(\dotsc{}122)_3, & C_8=(\dotsc{}222.02)_3, & C_9=(\dotsc{}222.02)_3, \\
C_{10}=(\dotsc{}21101)_3, & C_{11}=(\dotsc{}2122)_3, & C_{12}=(\dotsc{}2201)_3, \\
C_{13}=(\dotsc{}0202.2)_3, & C_{14}=(\dotsc{}10102)_3, & C_{15}=(\dotsc{}01221)_3.
\end{array}
\end{equation*}
For the Newton subdivision and tropicalization of this plane quartic, see the following subsection, in which we consider the interactions of the three algorithms of Section \ref{section:main_algorithms}.

\end{example}

\subsection{Reality Check:  Interactions Between The Algorithms}  We close Section \ref{section:main_algorithms} by checking that the three algorithms give results consistent with one another and with some mathematical theory.  We will use our running example of a genus 3 Mumford curve from Examples \ref{example:period_matrix}(3), \ref{example:abstract_tropical_curve}(3), and \ref{example:canonical_embedding}, for which we have computed a period matrix of the Jacobian, the abstract tropical curve, and a canonical embedding. 

First we will look at the period matrix and the abstract tropical curve, and verify that these outputs are consistent.   
Recall that for the period matrix $Q$ of $\text{Jac}(\Omega/\Gamma)$, we have
$$Q_{ij}=\frac{u_{\gamma_i}(z)}{u_{\gamma_i}(\gamma_j z)}.$$
Motivated by this, we define
\begin{align*}
Q: \Gamma\times\Gamma\rightarrow K^*\\
(\alpha,\beta)\mapsto \frac{u_\alpha(z)}{u_\alpha(\beta z)},
\end{align*} 
where our choice of $z\in\Omega$ does not affect the value of $Q(\alpha,\beta)$.  (Note that $Q_{ij}=Q(\gamma_i,\gamma_j)$.)  As shown in \cite[VI, 2]{GP}, the kernel of $Q$ is the commutator subgroup $[\Gamma,\Gamma]$ of $\Gamma$, and $Q$ is symmetric and positive definite (meaning $|Q(\alpha,\alpha)|<1$ for any $\alpha\not\equiv\left[\begin{matrix}1&0\\ 0&1\end{matrix}\right]\mod[\Gamma,\Gamma]$).  Moreover,  the following theorem holds (see \cite[Theorem 6.4]{Pu2}).

\begin{theorem} \label{metricgraphimportance}
Let $G$ be the abstract tropical curve of $\Omega/\Gamma$, and let $\pi_1(G)$ be its homotopy group, treating $G$ as a topological space.  There is a canonical isomorphism $\phi{}\colon{}\Gamma{}^{ab}\to{}\pi{}_1(G)^{ab}$ such that $\text{val}(Q(\gamma{},\gamma{}'))=\langle{}\phi{}(\gamma{}),\phi{}(\gamma{}')\rangle{}$, where  $\langle p_1 ,p_2 \rangle{}$ denotes the shared edge length of the oriented paths $p_1$ and $p_2$.
\end{theorem}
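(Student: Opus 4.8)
The plan is to build the isomorphism $\phi\colon\Gamma^{ab}\to\pi_1(G)^{ab}$ directly from the combinatorics of the good fundamental domain, and then to compare the valuation of the cross-ratio product defining $Q(\gamma,\gamma')$ with the shared-edge-length pairing on the tree. The starting point is Theorem \ref{statement:good_fundamental_domain}: the quotient $\Omega/\Gamma$ is obtained from $F$ by gluing $B_i^+\setminus B_i$ to $B_i'^+\setminus B_i'$ via $\gamma_i$, and on the analytic side this identification turns the subtree $T$ of $(\pro^1)^{an}$ spanned by $P_1,\dots,P_g,P_1',\dots,P_g'$ into the abstract tropical curve $G$ (this is exactly what Algorithm \ref{algorithm:abstract_tropical_curve} records, with reference to \cite[I 4.3]{GP}). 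Under this gluing, the loop $s_i$ of $G$ — the image of the unique path from $P_i$ to $P_i'$ — is a generator of $\pi_1(G)$, and the map sending the class of $\gamma_i$ to the class of $s_i$ is the desired $\phi$; that it is a well-defined isomorphism on abelianizations follows because $\pi_1(\Omega/\Gamma)\cong\Gamma$ (as $\Omega$ is the universal cover, $\Gamma$ acting freely and discontinuously) and $\Omega/\Gamma$ deformation retracts onto $G$, so $\pi_1(G)\cong\Gamma$ compatibly with the chosen generators.

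With $\phi$ in hand, the core computation is to show $\val(Q(\gamma,\gamma'))=\langle\phi(\gamma),\phi(\gamma')\rangle$. Since both sides are bilinear (the left side because $Q$ is, modulo $[\Gamma,\Gamma]$, a symmetric bilinear pairing as recalled from \cite[VI.2]{GP}; the right side because intersection/overlap of cycles on a graph is bilinear), it suffices to check the identity on the generators, i.e. to compute $\val(Q_{ij})=\val\big(Q(\gamma_i,\gamma_j)\big)$ and match it with the signed overlap length of the paths $s_i$ and $s_j$ in $T$. For this I would use the product-of-cross-ratios expression already derived in the proof of Theorem \ref{theorem:approximation_theorem},
\[
Q_{ij}=\prod_{\gamma\in\Gamma}\frac{(z-\gamma a)(\gamma_j z-\gamma\gamma_i a)}{(z-\gamma\gamma_i a)(\gamma_j z-\gamma a)},
\]
and evaluate its valuation term by term by locating the points $\gamma a$, $\gamma\gamma_i a$ relative to the balls $B_\ell,B_\ell'$, exactly as in Proposition \ref{statement:distance_gamma_a} and Proposition \ref{statement:approximation_fraction}: for all but finitely many $\gamma$ the corresponding factor has valuation $0$, and the finitely many surviving factors are governed by how far into the tree the translates travel, which is measured in multiples of edge lengths of $T$. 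Choosing $a\in B_i'^+\setminus B_i'$ and $z\in B_j'^+\setminus B_j'$ (as permitted since the value does not depend on these choices) makes the relevant contributions localize precisely along the portion of $T$ that the path $s_i$ and the path $s_j$ traverse in common — with sign determined by whether they traverse it in the same or opposite direction — which is by definition $\langle\phi(\gamma_i),\phi(\gamma_j)\rangle$.

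The main obstacle I expect is the bookkeeping in the term-by-term valuation computation: one must argue carefully which translates $\gamma a$ contribute nontrivially to the product, track the cancellation that reduces the infinite product to a finite signed sum of tree-distances, and get the \emph{sign} right so that the answer is the oriented overlap pairing rather than just its absolute value. A clean way to organize this is to pass through the theta-function description $u_{\gamma_i}(z)=\Theta(a,\gamma_i a;z)$ and use the transformation law $u_{\gamma_i}(\gamma_j z)=Q_{ij}\,u_{\gamma_i}(z)$ together with the known divisor of $u_{\gamma_i}$ (zeros along the $\Gamma$-orbit of a point in $B_i$, poles along the orbit in $B_i'$), reducing the valuation of $Q_{ij}$ to counting, with multiplicity, how the orbit of these zeros and poles sits on the path from $z$ to $\gamma_j z$ in $(\pro^1)^{an}$ — and this count is manifestly the edge-overlap of $s_i$ and $s_j$. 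The remaining work — that $\phi$ is canonical, i.e. independent of the chosen good fundamental domain — follows since any two markings coming from good positions differ by an element of $\mathrm{Out}(F_g)$ that is realized by a homeomorphism of $G$, so the induced map on $\pi_1^{ab}$ is intrinsic; alternatively one simply cites \cite[Theorem 6.4]{Pu2} for this naturality statement.
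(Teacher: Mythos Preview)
The paper does not prove this theorem: it is stated with the parenthetical ``see \cite[Theorem 6.4]{Pu2}'' and no argument is given beyond an informal paragraph explaining the intuition behind $\phi$ (namely that $\gamma_i$ corresponds to the loop arising from gluing $P_i$ to $P_i'$). So there is nothing in the paper to compare your argument against; you have written substantially more than the authors did.

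Your sketch is a reasonable outline of how one would actually carry out the proof, and your description of $\phi$ matches the paper's stated intuition exactly. The part that remains genuinely nontrivial is the valuation computation: you correctly identify that the issue is the term-by-term bookkeeping in the cross-ratio product, but your proposal does not actually execute it --- you assert that ``the finitely many surviving factors are governed by how far into the tree the translates travel'' and that choosing $a,z$ well ``makes the relevant contributions localize precisely along the portion of $T$ that $s_i$ and $s_j$ traverse in common,'' but neither of these is established by Propositions \ref{statement:distance_gamma_a} or \ref{statement:approximation_fraction}, which only give \emph{upper bounds} on the size of individual factors (enough for convergence, not for exact valuations). Getting the exact value of $\val(Q_{ij})$, with the correct sign, from the infinite product requires a sharper argument --- essentially the one in \cite{Pu2} --- and your alternative route through the divisor of $u_{\gamma_i}$ and counting zeros/poles along the path from $z$ to $\gamma_j z$ is closer to what is actually needed, but is again only asserted, not carried out. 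In short: your construction of $\phi$ is fine and matches the paper; your plan for the pairing identity is the right shape but is a sketch rather than a proof, and the paper itself simply defers to \cite{Pu2} for all of it.
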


The map $\phi$ is made very intuitive by considering the construction of $G$ in Algoirthm \ref{algorithm:abstract_tropical_curve}:  a generator $\gamma_i$ of $\Gamma$ yields two points $P_i,P_i'\in (\pro^1)^{an}$ (corresponding to balls containing the eigenvalues of $\gamma_i$), and these points are glued together in constructing $G$.  So $\gamma_i$ corresponds to a loop around the cycle resulting from this gluing; after abelianization, this intuition is made rigorous.

Consider the matrix $Q$ computed in Example \ref{example:period_matrix}(3).  Worrying only about valuations, we have
$$\val(Q)=\left[\begin{matrix}
4 & 1 & 1\\
1& 4 & -1\\
1 & -1 & 4
\end{matrix}\right].$$
For $i=1,2,3$, let $s_i$ be the oriented loop in $G$ arising from gluing $P_i$ and $P_i'$.   In light of Theorem \ref{metricgraphimportance}, we expect to find shared edge lengths
$$\left<s_1,s_1\right>=\left<s_2,s_2\right>=\left<s_3,s_3\right>=4,$$
$$\left<s_1,s_2\right>=\left<s_1,s_3\right>=1,$$
and\[\left<s_2,s_3\right>=-1.\]
That is, each cycle length should be $4$, and the common edge of each distinct pair of cycles should have length $1$, with the orientation of $s_1$ agreeing with the orientation of $s_2$ (respectively, $s_3$) on the shared edge and the orientation of $s_2$ disagreeing with the orientation of $s_3$ on the shared edge.  This is indeed what we found in Example \ref{example:abstract_tropical_curve}(3), with edge lengths and orientations shown in Figure \ref{figure:genus3_example1}.  This example has shown how the outputs of Algorithms \ref{algperiodmatrix} and \ref{algorithm:abstract_tropical_curve} can be checked against one another.

We will now consider the relationship between the abstract tropical curve and the canonical embedding for this example.  In particular, we will compute a tropicalization of the curve from the canonical embedding and see how this relates to the abstract tropical curve.

To compute the tropicalization of the curve, we will start with the quartic planar equation computed in Example \ref{example:canonical_embedding}.  The {\it Newton polytope} of this quartic is a triangle with side length $4$. We label each integral point inside or on the boundary of the Newton polytope by the valuation of the coefficient of the corresponding term, ignoring the variable $z$. For example, the point $(1,2)$ is labeled $-2$ because the valuation of the coefficient of $xy^2z$ is $\mathrm{val}(C_8)=-2$. We then take the {\it lower convex hull}, giving a subdivision of the Newton polytope as shown in Figure \ref{figure:genus3_example1_newton_polytope}. 
The tropicalization of the curve is combinatorially the dual graph of this polytope, and using the max convention of tropical geometry it sits in $\R^2$ as shown in Figure \ref{figure:genus3_example1_newton_polytope}, with the common point of the three cycles at $(0,0)$. 

Let us compare the cycles in the tropicalization with the cycles in the abstract tropical curve.  We know from \cite[\textsection 6.23]{BPR} that this tropicalization is faithful since all vertices are trivalent and are adjacent to at least one edge of weight one.
This means that lattice lengths on the tropicalization should agree with lengths of the abstract tropical curve.  Each cycle in the tropicalization has five edges, and for each cycle two edges are length $\frac{1}{2}$ and three are length $1$.  This gives a length of $4$, as we'd expect based on Example \ref{example:abstract_tropical_curve}(3).  Moreover, each shared edge has lattice length $1$, as was the case in the abstract tropical curve.  Thus we have checked the outputs of Algorithms \ref{algorithm:abstract_tropical_curve} and \ref{algcanonical} against one another.

\begin{figure}
\caption{The Newton polytope of the plane quartic curve in Example \ref{example:canonical_embedding}, and the corresponding tropical curve in $\R^2$ (drawn using the max convention).  Each edge of infinite length has weight $2$, and all other edges have weight $1$.}
\centering
\includegraphics[scale=0.7]{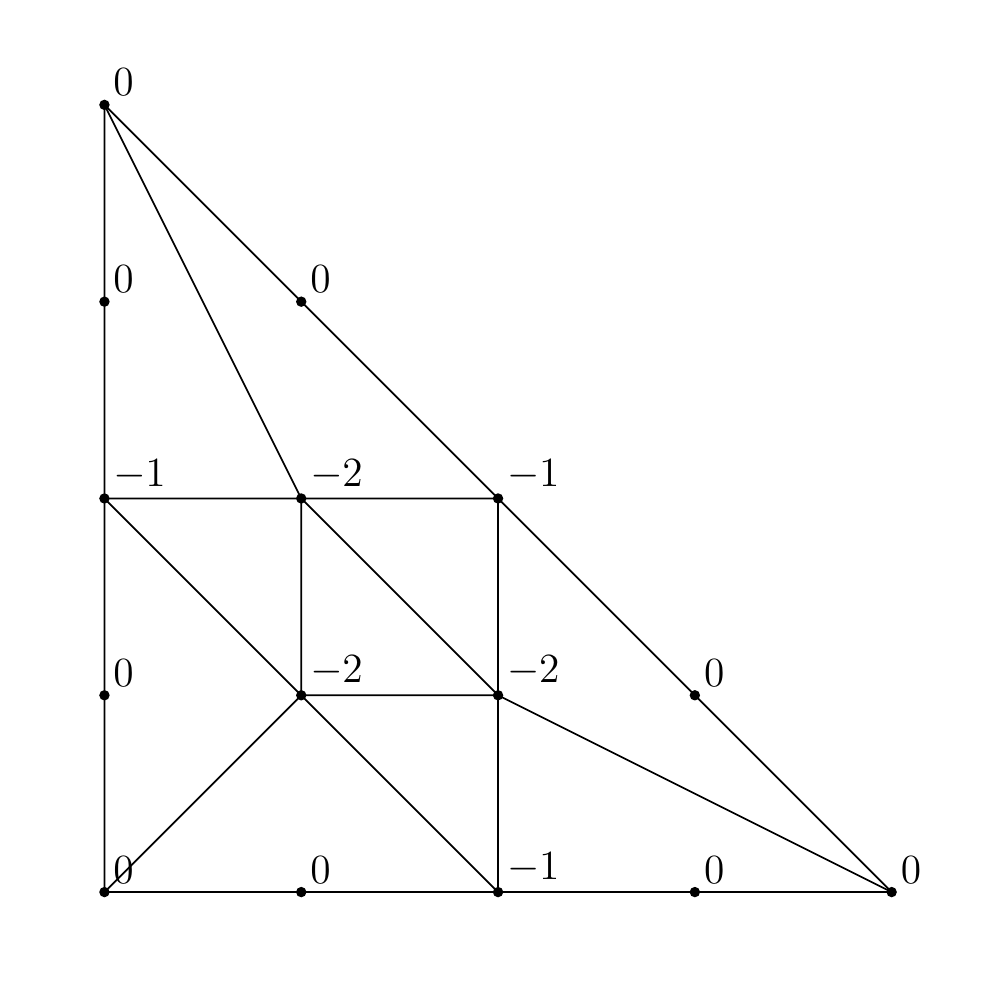}
\includegraphics[scale=0.7]{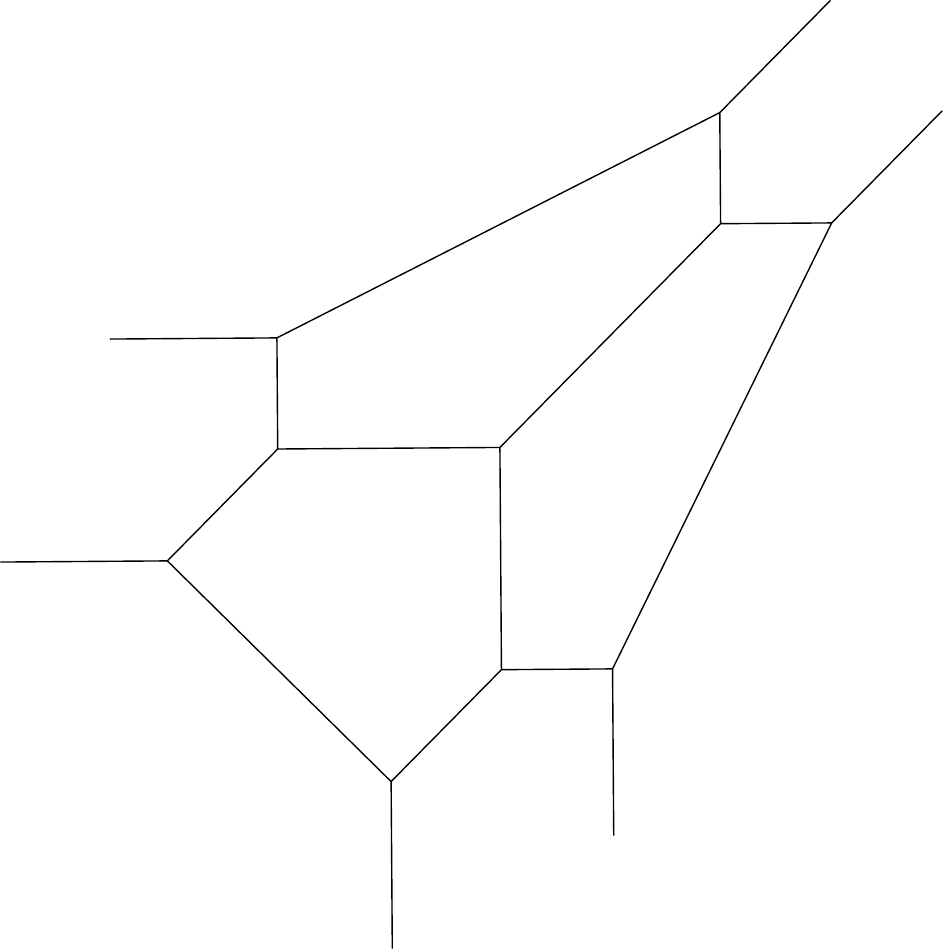}
\label{figure:genus3_example1_newton_polytope}
\end{figure}

\section{From Generators in Bad Position to Generators in Good Position}\label{goodpositionalgorithmsection}

The previous section describes several algorithms that compute various objects from a set of free generators of a Schottky group, assuming that the generators are in good position, and (in Algorithm \ref{algorithm:abstract_tropical_curve}) that a good fundamental domain is given together with the generators. This content of this section is what allows us to make this assumption. We give an algorithm (Algorithm \ref{algorithm:main_4}) that takes an arbitrary set of free generators of a Schottky group and outputs a set of free generators that are in good position, together with a good fundamental domain.  This algorithm can be modified as described in Remark \ref{remark:schottky_test} to perform a ``Schottky test''; in particular, given a set of $g$ invertible matrices generating a group $\Gamma$, the modified algorithm will either
\bi

\item return a set of $g$ free generators of $\Gamma$ in good position together with a good fundamental domain, which is a certificate that $\Gamma$ is Schottky;

\item return a relation satisfied by the input matrices, which is a certificate that the generators do not freely generate the group; or

\item return a non-hyperbolic, non-identity matrix $\gamma\in\Gamma$, which is a certificate that $\Gamma$ is not Schottky.

\ei
Before presenting Algorithm \ref{algorithm:main_4}, we will first develop some theory for trees, and then define useful subroutines.  Our starting point is a remark in Gerritzen and van der Put's book:

\begin{proposition}\cite[III 2.12.3]{GP}
Let $\Gamma{}$ be Schottky and $\Sigma{}$ and $\Omega{}$ be as usual. Let $T(\Sigma{})$ be the subtree of $(\mathbb{P}^1)^{an}$ spanned by $\Sigma{}$. Then the minimal skeleton of $\Omega{}/\Gamma{}$ is isomorphic to~$T(\Sigma{})/\Gamma{}$.
\end{proposition}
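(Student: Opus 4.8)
The plan is to identify $\Omega/\Gamma$ as a complete smooth curve over $K$, locate its minimal skeleton inside $(\mathbb{P}^1)^{an}$, and then show that skeleton coincides with the image of $T(\Sigma)$ under the quotient map $(\mathbb{P}^1)^{an} \setminus \Sigma \to (\Omega/\Gamma)^{an}$. The natural bridge is a good fundamental domain $F$ for some set of generators $\gamma_1,\dots,\gamma_g$, which exists by Theorem \ref{statement:good_fundamental_domain}(1). First I would recall that $F \subset (\mathbb{P}^1)^{an}$ is the complement of the $2g$ analytic open balls $B_i^{an}, B_i'^{an}$, and that by Theorem \ref{statement:good_fundamental_domain}(2)--(4) the curve $(\Omega/\Gamma)^{an}$ is obtained from $F$ by glueing $(B_i^+)^{an}\setminus B_i^{an}$ to $(B_i'^+)^{an}\setminus B_i'^{an}$ via $\gamma_i$. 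These glued-in annular boundary pieces are exactly analytic annuli, and their skeleta (the straight paths between the Gaussian points $P_i$ and $P_i'$) become, after glueing, $g$ loops. The complement of the union of these loops in $(\Omega/\Gamma)^{an}$ should be a disjoint union of open balls, which would exhibit $\{P_1,\dots,P_g\}$ (after identification $P_i = P_i'$) as a semistable vertex set and the union of these $g$ loops as a skeleton.

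The heart of the argument is to see that the union of the $g$ loops $\bar s_i$ (the images of the paths from $P_i$ to $P_i'$) is precisely $T(\Sigma)/\Gamma$. For this I would argue that $T(\Sigma) \cap F$ equals the union of the paths from $P_i$ to $P_i'$ inside $F$: every bad point of $\Gamma$ lies in one of the closed balls $B_i^+$ or $B_i'^+$ (the fixed points of $\gamma_i$ lie in $B_i^+$ and $B_i'^+$ by Lemma \ref{statement:location_of_gamma_b} applied to powers of $\gamma_i$, and the limit points are caught in the balls by the same kind of contraction estimate used in Propositions \ref{statement:distance_gamma_a} and \ref{statement:approximation_fraction}), so any path in $(\mathbb{P}^1)^{an}$ between two points of $\Sigma$ that passes through $F^\circ$ must enter $F$ through $P_i$ and leave through $P_j$, hence contains the junction structure of the $P$'s; tracking which balls contain $\Gamma$-translates of which $B_\ell^{(\prime)+}$ via Lemma \ref{statement:location_of_gamma_b} shows the portion of $T(\Sigma)$ inside $F$ is exactly $\bigcup_i [P_i,P_i']$. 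Passing to the quotient, $T(\Sigma)/\Gamma$ is covered by the $g$ loops obtained by glueing these paths, which is the skeleton described above.

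Finally I would invoke the uniqueness of the minimal skeleton for a complete curve of genus $g \ge 2$ (the relevant definition is in the excerpt): the $g$-loop graph just constructed has first Betti number $g$, which equals the genus of $\Omega/\Gamma$ by Mumford's theorem, so it carries all the cycles and hence must be the minimal skeleton; equivalently, removing any edge of it leaves a tree plus balls, which cannot happen for the minimal skeleton of a genus-$g$ curve. For $g = 1$ one argues directly with the single loop. The main obstacle I expect is the careful bookkeeping in the middle step --- proving that $\Sigma$ is captured inside the $2g$ boundary balls and that no extra branches of $T(\Sigma)$ stick into $F^\circ$ --- since this requires combining Lemma \ref{statement:location_of_gamma_b} with a limit argument for the Type-4 accumulation points of $\Sigma$; the first and last steps are essentially formal given Theorem \ref{statement:good_fundamental_domain} and the structure theory of $(\mathbb{P}^1)^{an}$ recalled in Section \ref{section:fundamental_domains}.
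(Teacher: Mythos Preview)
The paper does not supply its own proof of this proposition; it is simply quoted from Gerritzen--van der Put \cite[III 2.12.3]{GP} and used as a black box. (The closely related Algorithm \ref{algorithm:abstract_tropical_curve} likewise defers its justification to \cite[I 4.3]{GP}.) So there is nothing in the paper to compare your argument against directly.

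That said, your sketch is in the right spirit but has a couple of genuine inaccuracies you would need to repair. First, $\{P_1,\dots,P_g\}$ (after identifying $P_i$ with $P_i'$) is \emph{not} in general a semistable vertex set: removing only those points from $(\Omega/\Gamma)^{an}$ leaves the finite tree spanned by $P_1,\dots,P_g,P_1',\dots,P_g'$ (minus its leaves) sitting inside the complement, and that tree is neither an open ball nor an open annulus as soon as it has a branch point. The correct semistable vertex set consists of all Type~2 branch points of that spanning tree together with the $P_i$; correspondingly the skeleton is not ``$g$ loops'' glued at the $P_i$ but the graph obtained in Algorithm \ref{algorithm:abstract_tropical_curve} by identifying $P_i$ with $P_i'$ in the full spanning tree. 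Second, your minimality argument (``first Betti number $g$, so it carries all the cycles and hence must be minimal'') is not sufficient: a skeleton with the correct Betti number can still fail to be minimal if it has superfluous bivalent or univalent vertices; you need to check that every vertex of the resulting graph has valence at least $3$ (which does follow once the vertex set is taken correctly, since each $P_i$ sees at least the edge into $B_i$, the edge into $B_i'$, and the interior tree). The middle step---showing $T(\Sigma)\cap F$ is exactly the tree spanned by the $P_i,P_i'$---is the part you handle best, and Lemma \ref{statement:location_of_gamma_b} together with $\Omega = \bigcup_\gamma \gamma F$ is indeed the right tool there.
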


This statement is essential for our algorithm, because it helps reducing problems involving $(\mathbb{P}^1)^{an}$ to problems involving the much simpler tree $T(\Sigma{})$. Though $T(\Sigma{})$ is not finite, it is a finitely branching tree:  it consists of vertices and edges such that each vertex is connected with finitely many edges. A good fundamental domain in $(\mathbb{P}^1)^{an}$ can be obtained from a {\it good fundamental domain} in $T(\Sigma{})$, defined as follows:

\begin{defn}
A \emph{principal subtree} $T$ of $T(\Sigma{})$ is a connected component of $T(\Sigma{})\backslash{}\{e\}$ for some edge $e$ of $T(\Sigma{})$. An \emph{extended principal subtree} is $T^+=T\cup{}\{e\}$.
\end{defn}

\begin{defn}
A \emph{good fundamental domain} $S$ in $T(\Sigma{})$ for a set of free generators $\gamma{}_1,\dotsc{},\gamma{}_g$ of $\Gamma{}$ is the complement of $2g$ principal subtrees $T_1,\dotsc{},T_g,T_1',\dotsc{},T_g'$, such that $T_1^+,\dotsc{},T_g^+,$ $T_1'^+,\dotsc{},T_g'^+$ are disjoint, and that $\gamma{}_i(T(\Sigma{})\backslash{}T_i'^+)=T_i$ and $\gamma{}_i^{-1}(T(\Sigma{})\backslash{}T_i^+)=T_i'$. The \emph{interior} of $S$ is $S^{\circ{}}=T(\Sigma{})\backslash{}(T_1^+\cup{}\dotsb{}\cup{}T_g^+\cup{}T_1'^+\cup{}\dotsb{}\cup{}T_g'^+)$. The \emph{boundary} of $S$ is~$S\backslash{}S^{\circ{}}$.
\end{defn}

In other words, $S$ is a connected finite subtree of $T(\Sigma{})$ with $2g$ boundary edges $(R_i,Q_i)$ and $(R_i',Q_i')$, where $Q_i,Q_i'\notin{}S$, such that $\gamma{}_i(R_i',Q_i')=(Q_i,R_i)$. Given this data, the principal subtree $T_i$ (resp. $T_i'$) is the connected component of $T(\Sigma{})\backslash{}(R_i,Q_i)$ (resp. $T(\Sigma{}\backslash{}(R_i',Q_i')$) that is disjoint from $S$. Given a good fundamental domain $S$ in $T(\Sigma{})$, one can find a good fundamental domain in $(\mathbb{P}^1)^{an}$ as follows. Without loss of generality, we may assume that the retraction of $\infty{}$ to $T(\Sigma{})$ is in the interior of $S$. Then, $Q_i$ and $R_i$ correspond to two nested balls $B(a_i,r_i)^+\subset{}B(a_i,R_i)^+$. Define $B_i=B(a_i,\sqrt{r_iR_i})$. Define $B_i'$ similarly.

\begin{proposition}\label{statement:graph_to_analytic_fundamental_domain}
Let $B_i,B_i'$ be as above. Then $F=(\mathbb{P}^1)^{an}\backslash{}(B_1\cup{}\dotsb{}\cup{}B_g\cup{}B_1'\cup{}\dotsb{}\cup{}B_g')$ is a good fundamental domain.
\end{proposition}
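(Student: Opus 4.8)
The plan is to translate the combinatorial data of a good fundamental domain $S$ in $T(\Sigma)$ into the metric data required by Definition~\ref{definition:analytic_good_fundamental_domain}, and then verify the three defining properties of an analytic good fundamental domain one at a time. First I would record the basic dictionary between edges of $T(\Sigma)$ and nested pairs of closed balls: a boundary edge $(R_i,Q_i)$ of $S$ with $Q_i\notin S$ corresponds to a pair $B(a_i,r_i)^+\subsetneq B(a_i,\rho_i)^+$ where $r_i<\rho_i$, and the principal subtree $T_i$ hanging off $Q_i$ is exactly the set of (analytic) points corresponding to closed balls contained in $B(a_i,r_i)^+$, together with their Type~4 limits; while $T(\Sigma)\setminus T_i^+$ retracts onto the part of the tree on the $R_i$-side. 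Setting $B_i=B(a_i,\sqrt{r_i\rho_i})$ places the Gaussian point of $B_i$ at the midpoint of the edge $(R_i,Q_i)$, so the analytic open ball $B_i^{an}$ contains $T_i$ but not $R_i$, and its complementary closed ball $(B_i^+)^{an}$ is disjoint from $T_i'^+$ and from every $T_j^+$, $T_j'^+$ with $j\neq i$ — this is where the disjointness of the $T_i^+,T_i'^+$ in $S$ gets used. So the $2g$ analytic closed balls $(B_i^+)^{an},(B_i'^+)^{an}$ are pairwise disjoint, giving the first requirement of Definition~\ref{definition:analytic_good_fundamental_domain}.

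Next I would check the matching conditions $\gamma_i((\mathbb{P}^1)^{an}\setminus B_i'^{an})=(B_i^+)^{an}$ and $\gamma_i^{-1}((\mathbb{P}^1)^{an}\setminus B_i^{an})=(B_i'^+)^{an}$. Since $PGL(2,K)$ acts on $(\mathbb{P}^1)^{an}$ by isometries on the Type~2/3 locus and sends balls to balls, it suffices to understand where $\gamma_i$ sends the relevant Gaussian point and to use that an element of $PGL(2,K)$ is determined by its action on enough points. The hypothesis $\gamma_i(T(\Sigma)\setminus T_i'^+)=T_i$ in the definition of a good fundamental domain in $T(\Sigma)$ says precisely that $\gamma_i$ carries the edge $(R_i',Q_i')$ to the edge $(Q_i,R_i)$ (orientation reversed), hence carries the midpoint of $(R_i',Q_i')$ — the Gaussian point of $B_i'$ — to the midpoint of $(R_i,Q_i)$ — the Gaussian point of $B_i$. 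Combined with the fact that $\gamma_i$ maps the principal subtree $T(\Sigma)\setminus T_i'^+$ (which is the side of $T(\Sigma)$ containing $S$, swept out by $(\mathbb{P}^1)^{an}\setminus B_i'^{an}$ after adding back Type~1,4 points) isometrically onto $T_i$, and that $T_i$ sits inside $(B_i^+)^{an}$, one gets $\gamma_i((\mathbb{P}^1)^{an}\setminus B_i'^{an})=(B_i^+)^{an}$ by comparing the two analytic balls: they have the same boundary Gaussian point and $\gamma_i$ is an isometry, so they coincide. The reverse identity follows by applying $\gamma_i^{-1}$, or symmetrically. Here I would be slightly careful that $(B^+)^{an}$ is genuinely the image and not the closure issue flagged earlier in the paper — but since $\gamma_i$ maps open balls to open balls and closed to closed, the image of the analytic closed ball $(\mathbb{P}^1)^{an}\setminus B_i'^{an}$ is an analytic closed ball, and it must be $(B_i^+)^{an}$.

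Finally, the additional identities $\gamma_i((\mathbb{P}^1)^{an}\setminus (B_i'^+)^{an})=B_i^{an}$ etc.\ follow formally from the two matching conditions exactly as noted immediately after Definition~\ref{definition:analytic_good_fundamental_domain} (apply $\gamma_i$ to the complement), so nothing new is needed there. I would close by invoking the already-established correspondence between good fundamental domains in $(\mathbb{P}^1)^{an}$ and in $\mathbb{P}^1$ if one wants the classical statement as well, but that is automatic. The main obstacle I anticipate is the bookkeeping in the previous paragraph: making precise the claim that the principal subtree $T(\Sigma)\setminus T_i'^+$ is exactly what is ``swept out'' by $(\mathbb{P}^1)^{an}\setminus B_i'^{an}$ — i.e.\ that retraction onto $T(\Sigma)$ matches the ball decomposition — and checking that $\sqrt{r_i\rho_i}$ really lands the Gaussian point at the edge midpoint so that the isometry argument closes up cleanly. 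Everything else is a routine consequence of $PGL(2,K)$ acting by ball-preserving isometries together with the hypotheses packaged into the definition of a good fundamental domain in $T(\Sigma)$.
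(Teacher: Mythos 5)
Your proposal follows essentially the same route as the paper: place the Gaussian point of $B_i$ (resp.\ $B_i'$) at the midpoint of the boundary edge, read disjointness of the analytic closed balls off disjointness of the $T_i^+$, $T_i'^+$ in $T(\Sigma)$, and use the fact that $\gamma_i$ carries midpoint to midpoint plus one more piece of information to pin down which of the two sides of the Gaussian point is hit. The one spot you flagged as needing care — that the retraction onto $T(\Sigma)$ really does send $(B_i^+)^{an}$ into $T_i^+$, so that disjointness of the subtrees forces disjointness of the balls — is precisely the step the paper makes explicit: after assuming $\pi(\infty)$ lies in $S^\circ$, for any $P\in (B_i^+)^{an}$ the path from $P$ to $\infty$ passes through $P_i$, so $\pi(P)$ lies in $T_i\cup(P_i,Q_i)\subset T_i^+$, and disjointness follows. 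Your isometry/``same Gaussian boundary'' phrasing for the matching condition is a cosmetic variant of the paper's argument that $\gamma_i$ sends $B_i'$, a connected component of $(\mathbb{P}^1)^{an}\setminus\{P_i'\}$, to a connected component of $(\mathbb{P}^1)^{an}\setminus\{P_i\}$, with the correct component singled out by tracking a single point ($Q_i'\mapsto R_i$) — equivalent to your appeal to $\gamma_i$ mapping $T(\Sigma)\setminus T_i'^+$ onto $T_i\subset(B_i^+)^{an}$.
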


\begin{proof}
Let $Q_i,R_i$ be as above. Let $P_i$ be the midpoint of the segment of the boundary edge $(Q_i,R_i)$. Then, $P_i$ corresponds to the ball $B_i^+$. Let $\pi{}$ denote the retraction from $(\mathbb{P}^1)^{an}$ to $T(\Sigma{})$. Again, we may assume $\pi{}(\infty{})$ is in the interior of $S$. For any $P\in{}B_i^+$, the unique path from $P$ to $\infty{}$ passes through $P_i$. Therefore, $\pi{}(P)$ lies on the union of $T_i$ with the segment $(P_i,Q_i)$, which is a subset of $T_i^+$. Hence, the condition that $T_i^+$ and $T_i'^+$ are disjoint implies that the retraction of the $B_i^+$ and the $B_i'^+$ are disjoint. Thus, the $B_i^+$ and $B_i'^+$ are disjoint.

Let $(Q_i',R_i')$ be the boundary edge of $T_i'$, and let $P_i'$ be its midpoint. Since $\gamma{}_i(Q_i',R_i')=(R_i,Q_i)$, it sends the midpoint $P_i'$ to $P_i$. Since $B_i'$ is a connected component in $(\mathbb{P}^1)^{an}\backslash{}\{P_i'\}$, the element $\gamma{}_i$ must send $B_i'$ to a connected component of $(\mathbb{P}^1)^{an}\backslash{}\{P_i\}$. One of the connected components in $(\mathbb{P}^1)^{an}\backslash{}\{P_i\}$ is $(\mathbb{P}^1)^{an}\backslash{}B_i^+$. Since $\gamma{}_i$ sends $Q_i'\in{}B_i'$ to $R_i\in{}(\mathbb{P}^1)^{an}\backslash{}B_i^+$, it must send $B_i'$ to $(\mathbb{P}^1)^{an}\backslash{}B_i^+$. Similarly, $\gamma{}_i^{-1}$ sends $B_i$ to $(\mathbb{P}^1)^{an}\backslash{}B_i'^+$. Thus $F$ is a good fundamental domain in $(\mathbb{P}^1)^{an}$.
\end{proof}

One can establish properties of $S$ similar to Theorem \ref{statement:good_fundamental_domain}. They can be derived either combinatorially or from Proposition \ref{statement:graph_to_analytic_fundamental_domain}.

The following algorithm constructs a good fundamental domain $S$ in $T(\Sigma{})$.

\begin{subroutine}[Good Fundamental Domain Construction]\label{algorithm:construct_good_fundamental_domain}\quad{}
{\rm
\begin{algorithmic}[1]

\Require An ``agent" knowing all vertices and edges of $T=T(\Sigma{})$, and the map $T(\Sigma{})\to{}T(\Sigma{})/\Gamma{}$, where $\Gamma$ is defined over $\Q_p$.
\Ensure A good fundamental domain $S$ in $T(\Sigma{})$.

\State Choose a vertex $P$ of $T$. Let $P_1,\dotsc{},P_k$ be all neighbors of $P$ in $T$.
\State Let $V\gets{}\{P\}$, $E\gets{}\emptyset{}$, $O\gets{}\{(P,P_1),\dotsc{},(P,P_k)\}$, $I\gets{}\emptyset{}$, $A\gets{}\emptyset{}$.
\While {$O\neq{}\phi{}$}
\State Choose $(Q,Q')\in{}O$, remove it from $O$ and add it to $E$.
\State Let $Q,Q_1,\dotsc{},Q_k$ be all neighbors of $Q'$ in $T$.
\State Add $Q'$ to $V$.
\For {each $Q_k$}
\State With the help of the ``agent" in the input, determine if $(Q_k,Q')$ is conjugate to some edge $(R,R')\in{}O$, i.e. $\gamma{}Q_k=R$ and $\gamma{}Q'=R'$ for some $\gamma{}\in{}\Gamma{}$.
\If {so}
\State Remove $(R,R')$ from $O$.
\State Add $(Q',Q_k),(R,R')$ to $I$.
\State Add $\gamma{}$ to $A$.
\Else
\State Add $(Q_k,Q')$ to $O$.
\EndIf
\EndFor
\EndWhile
\State \Return $S=V\cup{}E\cup{}I$. (The edges in $I$ are the boundary edges, and $A$ is a set of free generators of $\Gamma{}$ in good position.)

\end{algorithmic}
}
\end{subroutine}

\begin{proof}
Consider the map from $T(\Sigma{})$ to $G=T(\Sigma{})/\Gamma{}$. Let $P$ be as in Step (1). Suppose that a ``fire'' starts at $P\in{}T(\Sigma{})$ and the image of $P$ in $G$. In each step, when we choose the edge $(Q,Q')$ in Step (4) and add a vertex $Q'$ to $V$ in Step (5), we ``propagate" the fire from $Q$ to $Q'$, and ``burn" $Q'$ together with halves of all edges connecting to $Q'$. Also, we ``burn" the corresponding part in $G$. Suppose two fires meet each other in $G$. In this case, both halves of an edge in $G$ are burned, but it corresponds to two half burned edges in $T(\Sigma{})$. If so, we stop the fire by removing the edges from $O$ and adding them to $I$ (Step (9)). The algorithm terminates when the whole graph $G$ is burned. The burned part $S'$ of $T(\Sigma{})$ is a lifting of $G$. Then, $V$ is the set of vertices of $S'$, $E$ is the set of whole edges in $S'$, and $I$ is the set of half edges in $S'$. The fact that they form a good fundamental domain follows from the method in the proof of \cite[I (4.3)]{GP}.
\end{proof}

This algorithm requires an ``agent" knowing everything about $T(\Sigma{})$. It is hard to construct such an ``agent" because $T(\Sigma)$ is infinite. Therefore, we approximate $T(\Sigma{})$ by a finite subtree. One candidate is $T(\Sigma{}_m)$, where $\Sigma{}_m$ is the set of fixed points of elements of $\Gamma{}_m$. Recall that $\Gamma{}_m$ is the set of elements of $\Gamma{}$ whose reduced words in terms of the given generators have lengths at most $m$. We take one step further: we approximate $T(\Sigma{})$ by $T(\Gamma{}_ma)$, where $a$ is any point in $\Sigma{}$.

\begin{lemma}
For any $a\in{}K$, we have $T(\Gamma{}a)\supset{}T(\Sigma{})$. Furthermore, if $a\in{}\Sigma{}$, then $T(\Gamma{}a)=T(\Sigma{})$.
\end{lemma}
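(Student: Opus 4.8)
The plan is to analyze how the tree $T(\Gamma a)$ sits inside $(\pro^1)^{an}$ relative to $T(\Sigma)$, using two basic facts: that $\Sigma$ is exactly the closure of the set of fixed points of nonidentity elements of $\Gamma$ (equivalently, the set of limit points of any single orbit $\Gamma a$), and that for a discrete set $\Sigma'$ the spanned subtree $T(\Sigma')$ is the union of all geodesics between pairs of points of $\Sigma'$. The first containment $T(\Gamma a)\supset T(\Sigma)$ should follow because every point of $\Sigma$ is a limit of points in $\Gamma a$: given $x,y\in\Sigma$ and a point $P$ on the geodesic $[x,y]$, I would approximate $x$ and $y$ by sequences $\gamma_n a\to x$ and $\delta_n a\to y$ in $\Gamma a$, observe that the geodesics $[\gamma_n a,\delta_n a]$ lie in $T(\Gamma a)$, and use the fact that in the $\R$-tree $(\pro^1)^{an}$ these geodesics converge to $[x,y]$ (the tree is complete, and geodesics vary continuously with their endpoints in the metric on Type 2/3 points, extended to Type 1/4 points). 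Hence $P\in\overline{T(\Gamma a)}$; since $T(\Gamma a)$ is itself a closed subtree (being spanned by a $\Gamma$-invariant, hence discrete-in-$\Omega$, possibly-accumulating-on-$\Sigma$ set — care is needed here, see below), we get $P\in T(\Gamma a)$.

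For the reverse inclusion when $a\in\Sigma$: here $\Gamma a\subset\Sigma$ because $\Sigma$ is $\Gamma$-invariant (fixed points and their limit points are carried to fixed points and limit points by any $\delta\in\Gamma$), so every geodesic between two points of $\Gamma a$ is a geodesic between two points of $\Sigma$, giving $T(\Gamma a)\subset T(\Sigma)$ directly from the definition of the spanned subtree. Combined with the first part this yields $T(\Gamma a)=T(\Sigma)$.

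I would organize the write-up as: (i) recall that $\Sigma=\overline{\{\text{fixed points of }\Gamma\setminus\{\mathrm{id}\}\}}$ and that for \emph{any} $a$, the orbit $\Gamma a$ accumulates exactly on $\Sigma$ (each hyperbolic $\gamma$ has an attracting and a repelling fixed point, and iterating $\gamma^{\pm n}$ on $a$ converges to them; general points of $\Sigma$ are limits of fixed points, hence limits of such orbit points by a diagonal argument); (ii) prove $T(\Sigma)\subseteq T(\Gamma a)$ via the geodesic-convergence argument above; (iii) prove $T(\Gamma a)\subseteq T(\Sigma)$ when $a\in\Sigma$ by $\Gamma$-invariance of $\Sigma$. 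The main obstacle is step (ii): one must be careful that $T(\Gamma a)$ is genuinely closed in $(\pro^1)^{an}$ (so that limits of its geodesics stay inside it) and that geodesics between convergent sequences of endpoints converge to the geodesic between the limits. In an $\R$-tree both statements are standard, but since $(\pro^1)^{an}$ is not locally compact and the endpoints may be Type 1 (leaves "at infinity"), I would either invoke the tree structure described earlier in the excerpt (unique paths, the explicit metric, and the fact that Type 1 points are limits of Type 2/3 points) or reduce to finite approximations: show $[\gamma_n a,\delta_n a]$ eventually contains any compact sub-segment of $[x,y]$ by comparing where these geodesics branch, which is controlled by the ultrametric on the balls involved. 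A clean way to finish is to note $T(\Sigma)=\overline{\bigcup T(\Sigma_m)}$ and $T(\Gamma a)\supseteq\overline{\bigcup T(\Gamma_m a)}$, and that $T(\Sigma_m)\subseteq T(\Gamma a)$ for each $m$ because the finitely many fixed points in $\Sigma_m$ are limits along $\Gamma a$ of points whose pairwise geodesics already lie in $T(\Gamma a)$.
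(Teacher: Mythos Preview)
Your approach coincides with the paper's, which is extremely terse: it notes that for each $g\in\Gamma$ the two fixed points of $g$ arise as $\lim_{n\to\infty} g^{n}a$ and $\lim_{n\to\infty} g^{-n}a$, concludes that every point of $\Sigma$ lies in $\overline{\Gamma a}$, asserts ``Therefore $T(\Gamma a)\supset T(\Sigma)$'' with no further argument, and declares the second statement ``clear''. So the step you flag as the main obstacle---passing from $\Sigma\subset\overline{\Gamma a}$ to $T(\Sigma)\subset T(\Gamma a)$---is exactly the step the paper skips.

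One correction to your write-up: the route via closure of $T(\Gamma a)$ does not work as stated, because $T(\Gamma a)$ is \emph{not} closed when $a\in\Omega$ (its closure picks up all of $\Sigma$, none of which lies in $T(\Gamma a)$, since Type~1 points are leaves and $\Gamma a\cap\Sigma=\emptyset$). The alternative you mention is the right one and needs no closure: for any Type~2 or Type~3 point $P$ on the open geodesic $(x,y)$ with $x,y\in\Sigma$, removing $P$ disconnects $(\mathbb{P}^1)^{an}$ into components, one containing $x$ and another containing $y$; once $\gamma_n a$ and $\delta_n a$ are sufficiently close to $x$ and $y$ they lie in those respective components, so $P$ separates them and hence $P\in[\gamma_n a,\delta_n a]\subset T(\Gamma a)$ directly. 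This gives $T(\Sigma)\setminus\Sigma\subset T(\Gamma a)$; the Type~1 leaves $\Sigma$ themselves are not literally in $T(\Gamma a)$ when $a\in\Omega$, a harmless imprecision shared by the paper's statement. Your argument for the reverse inclusion when $a\in\Sigma$ (via $\Gamma$-invariance of $\Sigma$) is exactly what the paper intends by ``clear''.
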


\begin{proof}
For any $g\in{}\Gamma{}$, the fixed point corresponding to the eigenvalue with larger absolute value is the limit of the sequence $a,ga,g^2a,\dotsc{}$. The other fixed point is the limit of the sequence $a,g^{-1}a,g^{-2}a,\dotsc{}$. Therefore, every point in $\Sigma{}$ is either in $\Gamma{}a$ or a limit point of $\Gamma{}a$. Therefore, $T(\Gamma{}a)\supset{}T(\Sigma{})$. The second statement is clear.
\end{proof}

We can construct a complete list of vertices and edges in $T(\Gamma{}_ma)$. Then, the map from $T(\Gamma{}_ma)$ to $T(\Sigma{})/\Gamma{}$ can be approximated in the following way: for each pair of vertices $P,Q$ (resp. edges $e,f$ in $T(\Gamma{}_ma)$ and each given generator $\gamma{}_i$, check if $\gamma{}_iP=Q$ (resp. $\gamma{}_ie=f$). If so, then we identify them. Note that this method may not give the correct map, because two vertices $P$ and $Q$ in $T(\Gamma{}_ma)$ may be conjugate via the action of some $h_1h_2\dotsb{}h_k\in{}\Gamma{}$, where some intermediate step $h_lh_{l+1}\dotsb{}h_kP\notin{}T(\Gamma{}_ma)$. Due to this flaw, we need a way to certify the correctness of the output.

\begin{subroutine}[Good Fundamental Domain Certification]\label{algorithm:certify_good_fundamental_domain}\quad{}
{\rm
\begin{algorithmic}[1]

\Require Generators $\gamma{}_1,\dotsc{},\gamma{}_g\in\mathbb{Q}_p^{2\times 2}$ of a Schottky group $\Gamma{}$, and a quadruple $(V,E,I,A)$, where $V$ is a set of vertices in $T(\Sigma{})$, $E$ and $I$ are sets of edges of $T(\Sigma{})$, $I$ contains $k$ pairs of edges $(P_i,Q_i),(P_i',Q_i')$, where $P_i,P_i'\in{}V$, $Q_i,Q_i'\notin{}V$, and $A$ contains $k$ elements $a_i$ in~$\Gamma{}$.
\Ensure TRUE if $S=V\cup{}E\cup{}I$ is a good fundamental domain in $T(\Sigma{})$ for the set of generators $A$, and $I$ is the set of boundary edges. FALSE otherwise.

\State If $k\neq{}g$, \Return FALSE.
\State If $S$ is not connected, \Return FALSE.
\State If any element of $I$ is not a terminal edge of $S$, \Return FALSE.
\State If any $(P_i,Q_i)\neq{}a_i(Q_i',P_i')$, \Return FALSE.
\State Choose $P$ in the interior of $S$.
\For {$h\in{}\{\gamma{}_1,\dotsc{},\gamma{}_g,\gamma{}_1^{-1},\dotsc{},\gamma{}_g^{-1}\}$}
\State Using a variant of Subroutine \ref{algorithm:reduction_fundamental_domain}, find point $P'\in{}S$ and group element $\gamma{}\in{}\langle{}a_1,\dotsc{},a_k\rangle{}$ such that~$P'=\gamma{}(hP)$.
\State If $P\neq{}P'$, \Return FALSE.
\EndFor
\State \Return TRUE.

\end{algorithmic}
}
\end{subroutine}

\begin{proof}
Steps 1--4 verify that $S$ satisfies the definition of a good fundamental domain in $T(\Sigma{})$ for the set of generators $a_1,\dotsc{},a_g$. In addition, we need to verify that $a_1,\dotsc{},a_g$ generate the same group as the given generators $\gamma{}_1,\dotsc{},\gamma{}_g$. This is done by Steps 5--9. If $P=P'$ in Step 8, then there exists $\gamma{}\in{}\langle{}a_1,\dotsc{},a_k\rangle{}$ such that $\gamma{}hP=P$. We are assuming $a_i\in{}\Gamma{}$ in the input, so $\gamma{}h\in{}\Gamma{}$. Since the action of $\Gamma{}$ on $(\mathbb{P}^1)^{an}\backslash{}\Sigma{}$ is free, we have $\gamma{}h=\mathrm{id}$. Thus, $h\in{}\langle{}a_1,\dotsc{},a_k\rangle{}$. If $P=P'$ for all $h$, then $\Gamma{}=\langle{}a_1,\dotsc{},a_k\rangle{}$.

Otherwise, if $P\neq{}P'$ in Step 8 for some $h$, then there exists $\gamma{}'\in{}\Gamma{}$ such that $\gamma{}'P=P'$. For any $\gamma{}\in{}\langle{}a_1,\dotsc{},a_k\rangle{}$ other than identity, we have $\gamma{}P'\notin{}s^{\circ}$ by a variant of Lemma \ref{statement:location_of_gamma_b}. Therefore, $\Gamma{}\neq{}\langle{}a_1,\dotsc{},a_k\rangle{}$.
\end{proof}

If the certification fails, we choose a larger $m$ and try again, until it succeeds. We are ready to state our main algorithm for this section:

\begin{algorithm}[Turning Arbitrary Generators into Good Generators]\label{algorithm:main_4}\quad{}
{\rm
\begin{algorithmic}[1]

\Require Free generators $\gamma{}_1,\dotsc{},\gamma{}_g\in\mathbb{Q}_p^{2\times 2}$ of a Schottky group $\Gamma{}$.
\Ensure Free generators $a_1,\dotsc{},a_g$ of $\Gamma{}$, together with a good fundamental domain $F=(\mathbb{P}^1)^{an}\backslash{}(B_1\cup{}\dotsb{}\cup{}B_g\cup{}B_1'\cup{}\dotsb{}\cup{}B_g')$ for this set of generators.

\State Let $m=1$.
\State Let $a$ be a fixed point of some $\gamma{}_i$.
\State Compute all elements in $\Gamma{}_ma$.
\State Find all vertices and edges of $T(\Gamma{}_ma)$.
\State Approximate the map $T(\Gamma{}_ma)\to{}T(\Sigma{})/\Gamma{}$.
\State Use Subroutine \ref{algorithm:construct_good_fundamental_domain} to construct a subgraph $S=V\cup{}E\cup{}I$ of $T(\Gamma{}_ma)$ and a subset $A\subset{}\Gamma{}$.
\State Use Subroutine \ref{algorithm:certify_good_fundamental_domain} to determine if $S=V\cup{}E\cup{}I$ is a good fundamental domain in $T(\Sigma{})$.
\State If not, increment $m$ and go back to Step 2.
\State Compute $B_i$ and $B_i'$ from $S$ using the method in Proposition \ref{statement:graph_to_analytic_fundamental_domain}.
\State \Return generators $A$ and good fundamental domain $F=(\mathbb{P}^1)^{an}\backslash{}(B_1\cup{}\dotsb{}\cup{}B_g\cup{}B_1'\cup{}\dotsb{}\cup{}B_g')$.

\end{algorithmic}
}
\end{algorithm}

\begin{proof}
The correctness of the algorithm follows from the proof of Subroutine \ref{algorithm:certify_good_fundamental_domain}. It suffices to prove that the algorithm eventually terminates. Assume that we have the ``agent" in Subroutine \ref{algorithm:construct_good_fundamental_domain}. Since Subroutine \ref{algorithm:construct_good_fundamental_domain} terminates in a finite number of steps, the computation involves only finitely many vertices and edges in $T(\Sigma{})$. If $m$ is sufficiently large, $T(\Sigma{}_m)$ will contain all vertices and edges involved in the computation. Moreover, for any pair of vertices or edges in $T(\Sigma{}_m)$ that are identified in $T(\Sigma{})/\Gamma{}$, there exists a sequence of actions by the given generators of $\Gamma{}$ that sends one of them to the other, so there are finitely many intermediate steps. If we make $m$ even larger so that $T(\Sigma{}_m)$ contains all these intermediate steps, we get the correct approximation of the map $T(\Sigma{})\to{}T(\Sigma{})/\Gamma{}$. This data is indistinguishable from the ``agent" in the computation of Subroutine \ref{algorithm:construct_good_fundamental_domain}. Thus, it will output the correct good fundamental domain.
\end{proof}

\begin{remark}
The performance of the algorithm depends on how ``far" the given generator is from a set of generators in good position, measured by the lengths of the reduced words of the good generators in terms of the given generators. If the given generators is close to a set of generators in good position, then a relatively small $m$ is sufficient for $T(\Gamma{}_ma)$ to contain all relevant vertices. Otherwise, a larger $m$ is needed. For example, in the genus $2$ case, this algorithm terminates in a few minutes for our test cases where each given generator has a reduced word of length $\leq{}4$ in a set of good generators. However, the algorithm is not efficient on Example \ref{example:period_matrix} (4), where one of the given generators has a reduced word of length $101$. One possible way of speeding up the algorithm is to run the non-Euclidean Euclidean algorithm developed by Gilman \cite{Gi} on the given generators.
\end{remark}

\begin{remark}\label{remark:schottky_test}
We may relax the requirement that the input matrices freely generate a Schottky group by checking that every element in $\Gamma{}_m$ not coming from the empty word is hyperbolic before Step 3. If the group is Schottky and freely generated by the input matrices, the algorithm will terminate with a good fundamental domain. Otherwise, Step 7 will never certify a correct good fundamental domain, but the hyperbolic test will eventually fail when a non-hyperbolic matrix is generated. In particular, if the identity matrix is generated by a nonempty word, the generators are not free (though they may or may not generate a Schottky group); and if a non-identity hyperbolic matrix is generated, the group is not Schottky.  Thus, Algorithm \ref{algorithm:main_4} is turned into a Schottky test algorithm. Again, the non-Euclidean Euclidean algorithm in \cite{Gi} is a possible ingredient for a more efficient Schottky test algorithm.
\end{remark}

\section{Future Directions:  Reverse Algorithms and Whittaker Groups}\label{futurequestionssection}

In this section we describe further computational questions about Mumford curves.  Algorithms answering these questions would be highly desirable.

\subsection{Reversing The Algorithms in Section \ref{section:main_algorithms}}  Many of our main algorithms answer questions of the form ``Given $A$, find $B$'', which we can reverse to ``Given B, find $A$.''  For instance:

\begin{itemize}

\item  Given a period matrix $Q$, determine if the abelian variety $(K^*)^g/Q$ is the Jacobian of a Mumford curve, and if it is approximate the corresponding Schottky group.

\item  Given an abstract tropical curve $G$, find a Schottky group whose Mumford curve has $G$ as its abstract tropical curve.

\item  Given a polynomial representation of a curve, determine if it is a Mumford curve, and if it is approximate the corresponding Schottky group.
\end{itemize}

A particular subclass of Schottky groups called \emph{Whittaker groups} are likely a good starting point for these questions.  

\subsection{Whittaker Groups}
\label{sectionwhittaker} We will outline the construction of Whittaker groups (see \cite{Pu1} for more details), and discuss possible algorithms for handling computations with them.  We are particularly interested in going from a matrix representation to a polynomial representation, and vice versa.

 If $s\in PGL(2,K)$ is an element of order $2$, then $s$ will have two fixed points, $a$ and $b$, and is in fact determined by the pair $\{a,b\}$ as $$s=\left[\begin{matrix}a&b\\1&1\end{matrix}\right]\left[\begin{matrix}1&0\\0&-1\end{matrix}\right]\left[\begin{matrix}a&b\\1&1\end{matrix}\right]^{-1},$$
 as long as $\infty\neq a,b$.
Let $s_0,\ldots,s_g$ be $g+1$ elements of $PGL(2,K)$ of order $2$.  Write their fixed points as $\{a_0,b_0\}, \ldots, \{a_g,b_g\}$, and assume without loss of generality that $\infty\neq a_i,b_i$ for all $i$.  Let $B_0,\ldots, B_g$ denote the smallest open balls containing each pair, and assume that the corresponding closed balls $B_0^+,\ldots ,B_g^+$ are all disjoint.  
Then the group $\Gamma:=\left<s_0,\ldots,s_g\right>$ is in fact the free product $\left<s_0\right>*\cdots*\left<s_g\right>$. 

Note that $\Gamma$ is \emph{not} a Schottky group, since its generators are not hyperbolic.  However, we can still consider its action upon $\pro^1\setminus\Sigma=\Omega$.  To fix some notation, we will choose $a,b\in\Omega$ such that $a\notin \Gamma b$ and $\infty\notin\Gamma a\cup\Gamma b$ and will define
$$G(z):=\Theta(a,b;z)=\prod_{\gamma\in \Gamma}\frac{z-\gamma(a)}{z-\gamma(b)}.$$
 (In our previous definition of theta functions, we took $\Gamma$ to be Schottky, but the definition works fine for this $\Gamma$ as well.)  If we choose $a$ and $b$ such that $|G(\infty)-G(s_0\infty)|<1/2$, then $G$ will be invariant under $\Gamma$, which gives a morphism $\Omega/\Gamma\rightarrow \pro^1$.  This will have only one pole, so it is an isomorphism.

Now, let $W$ be the kernel of the map $\varphi:\Gamma\rightarrow\Z/2\Z$ defined by $\varphi(s_i)=1$ for all $i$.  Then $W=\left<s_0s_1,s_0s_2,\ldots,s_0s_g\right>$, and is in fact free on those generators.  One can show that $W$ is a Schottky group of rank $g$, and we call a group that arises in this way a \emph{Whittaker group}.  We already know that $\Omega/W$ is a curve of genus $g$; in fact, we have more than that.

\begin{theorem}[Van der Put, \cite{Pu1}] If $W$ is a Whittaker group, then $\Omega/W$ is a totally split hyperelliptic curve of genus $g$, with affine equation $y^2=\prod_{i=0}^g(x-G(a_i))(x-G(b_i))$.  Conversely, if  $X$ be a totally split hyperelliptic curve of genus $g$ over $K$, then there exists a Whittaker group $W$ such that $X\cong \Omega/W$, and this $W$ is unique up to conjugation in $PGL(2,K)$.
\end{theorem}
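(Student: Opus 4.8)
The plan is to prove the two directions separately, since the first is largely a matter of unwinding the theta-function formalism already developed in the excerpt, while the second requires a genuine construction. For the forward direction, suppose $W = \langle s_0 s_1, \dots, s_0 s_g \rangle$ is a Whittaker group inside $\Gamma = \langle s_0 \rangle * \cdots * \langle s_g \rangle$. First I would recall that the $\Gamma$-invariant function $G(z) = \Theta(a,b;z)$ induces an isomorphism $\Omega/\Gamma \to \pro^1$, as stated in the excerpt. Since $W \trianglelefteq \Gamma$ has index $2$, the quotient $\Omega/W$ is a degree-$2$ cover of $\Omega/\Gamma \cong \pro^1$, with deck transformation the hyperelliptic involution induced by any $s_i$. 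To pin down the branch locus, I would observe that a point of $\Omega/\Gamma$ is a branch point exactly when the corresponding $\Gamma$-orbit is stabilized in $\Gamma/W$, i.e. when it is fixed by some conjugate of an $s_i$; the fixed points of $s_i$ are $a_i, b_i$, and these map under $G$ to $G(a_i), G(b_i)$. That gives $2(g+1)$ branch points $G(a_0), G(b_0), \dots, G(a_g), G(b_g)$ in $\pro^1$ (one checks they are distinct using the disjointness of the $B_i^+$), so the cover is $y^2 = \prod_{i=0}^g (x - G(a_i))(x - G(b_i))$, a hyperelliptic curve of genus $g$ by Riemann–Hurwitz. It is totally split because all $2g+2$ branch points are $K$-rational and, more to the point, because the reduction of this model has the combinatorial structure forced by the good fundamental domain — I would cite \cite{Pu1} for the precise notion of ``totally split'' and its equivalence with this orbit description.

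For the converse, let $X\colon y^2 = f(x)$ be a totally split hyperelliptic curve of genus $g$, so $f$ has $2g+2$ distinct roots and the Berkovich skeleton / stable reduction of $X$ is as split as possible. The strategy is to recover the $g+1$ order-two elements $s_0, \dots, s_g$ from the branching data. Because $X$ is totally split, I would argue that the $2g+2$ branch points (Type 1 points of $(\pro^1)^{an}$, thinking of $X$ as a double cover of $\pro^1$) can be grouped into $g+1$ pairs $\{a_i, b_i\}$ so that the $g+1$ smallest enclosing balls $B_i^+$ are pairwise disjoint — this pairing is precisely what ``totally split'' buys us, via the structure of the stable model, and is the step I expect to require the most care. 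Given such a pairing, define $s_i \in PGL(2,K)$ to be the order-two element with fixed points $\{a_i, b_i\}$ (using the explicit formula in the excerpt), set $\Gamma = \langle s_0, \dots, s_g \rangle$, and let $W = \ker(\varphi\colon \Gamma \to \Z/2\Z)$ as in the construction. Then $W$ is a Whittaker group by definition, and by the already-proved forward direction $\Omega/W$ is a totally split hyperelliptic curve with branch points $G(a_i), G(b_i)$; composing with a Möbius transformation of the $x$-line matching these to the branch points of $f$ gives $X \cong \Omega/W$.

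For uniqueness up to conjugation in $PGL(2,K)$: suppose $W$ and $W'$ are Whittaker groups with $\Omega/W \cong \Omega/W' \cong X$. Each determines its $\Gamma$ (the unique group in which $W$ sits as an index-two subgroup with the prescribed $\varphi$ — one should check this overgroup is canonically attached to $W$, e.g. as the normalizer of $W$ in the relevant automorphism group, or via the hyperelliptic involution), and hence determines an unordered set of $g+1$ pairs of points of $\pro^1$, namely the fixed-point pairs of the $s_i$. The isomorphism $\Omega/W \cong \Omega/W'$ must carry the hyperelliptic structure to the hyperelliptic structure and therefore the branch locus to the branch locus, respecting the pairing into balls; lifting this to $(\pro^1)^{an}$, it is realized by an element of $PGL(2,K)$ conjugating the $s_i$ to the $s_i'$, hence $\Gamma$ to $\Gamma'$ and $W$ to $W'$. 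I would invoke \cite{Pu1} for the fact that the pairing (equivalently, the stable vertex set configuration) is an isomorphism invariant of a totally split hyperelliptic curve, which is really the crux of both existence and uniqueness.

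The main obstacle, as flagged above, is the converse's combinatorial input: showing that ``totally split'' is exactly the condition guaranteeing the branch points pair up into $g+1$ mutually disjoint enclosing balls, and that this pairing is canonical. Everything else is bookkeeping with the theta function $G$, Riemann–Hurwitz, and the action of $PGL(2,K)$ on $(\pro^1)^{an}$.
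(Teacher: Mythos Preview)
The paper does not prove this theorem at all: it is stated as a result of Van der Put and simply cited to \cite{Pu1}, with no argument given. So there is no ``paper's own proof'' to compare your proposal against.

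That said, your sketch is a reasonable outline of how such a proof goes, and you have correctly identified the load-bearing step: the equivalence between ``totally split'' and the existence of a pairing of the $2g+2$ branch points into $g+1$ pairs whose enclosing closed balls are disjoint. You defer this to \cite{Pu1}, which is honest but means your proposal is really a reduction to that reference rather than a self-contained proof. One point to tighten in the uniqueness argument: you assert that $\Gamma$ is canonically recoverable from $W$ (e.g.\ as a normalizer, or via the hyperelliptic involution), but this needs an actual argument---a priori $W$ could sit as an index-two subgroup of several different groups, and you need to explain why the hyperelliptic involution on $\Omega/W$ lifts to an element of $PGL(2,K)$ normalizing $W$ and generating the correct $\Gamma$. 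This is true, but it is where the uniqueness genuinely lives, and ``one should check'' undersells it.
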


\begin{remark}  There is a natural map $\Omega/W\rightarrow \Omega/\Gamma\cong \pro^1$.  This is the expected morphism of degree $2$ from the hyperelliptic curve to projective space, ramified at $2g+2$ points.
\end{remark}

If we are content with an algorithm taking $s_0,s_1,\ldots,s_g$ as the input representing a Whittaker group $W$ (so that $W=\left<s_0s_1,\ldots,s_0s_g\right>$), the above theorem tells us how to compute the ramification points of the hyperelliptic Mumford curve $\Omega/W$.

\begin{example}  Let's construct an example of a Whittaker group of genus $2$ with $K=\mathbb{Q}_3$.  We need to come up with matrices $s_0, s_1, s_2$ of order $2$ with fixed points sitting inside open balls whose corresponding closed balls are disjoint.  We will choose them so that the fixed points of $s_0$ are $0$ and $9$; of $s_1$ are $1$ and $10$; and of $s_2$ are $2$ and $11$.  (The smallest \emph{open } balls containing each pair of points has radius $\frac{1}{3}$, and the corresponding \emph{closed} balls of radius $\frac{1}{3}$ are disjoint.)  The eigenvalues will be $1$ and $-1$, and the eigenvectors are the fixed points (written projectively), so we can take
$$s_0=\left[\begin{matrix}0&9\\1&1\end{matrix}\right]^{-1}\left[\begin{matrix}1&0\\0&-1\end{matrix}\right]  \left[\begin{matrix}0&9\\1&1\end{matrix}\right]=
\left[\begin{matrix}-1&0\\-\frac{2}{9}&1\end{matrix}\right]  $$
$$s_1=\left[\begin{matrix}1&10\\1&1\end{matrix}\right]^{-1}\left[\begin{matrix}1&0\\0&-1\end{matrix}\right]  \left[\begin{matrix}1&10\\1&1\end{matrix}\right]=
\left[\begin{matrix}-\frac{11}{9}&\frac{20}{9}\\-\frac{2}{9}&\frac{11}{9}\end{matrix}\right]  $$
$$s_2=\left[\begin{matrix}2&11\\1&1\end{matrix}\right]^{-1}\left[\begin{matrix}1&0\\0&-1\end{matrix}\right]  \left[\begin{matrix}2&11\\1&1\end{matrix}\right]=
\left[\begin{matrix}-\frac{13}{9}&\frac{44}{9}\\-\frac{2}{9}&\frac{13}{9}\end{matrix}\right].  $$

So the group $$\Gamma=\left<
\left[\begin{matrix}-1&0\\-\frac{2}{9}&1\end{matrix}\right] ,
\left[\begin{matrix}-\frac{11}{9}&\frac{20}{9}\\-\frac{2}{9}&\frac{11}{9}\end{matrix}\right]  ,
\left[\begin{matrix}-\frac{13}{9}&\frac{44}{9}\\-\frac{2}{9}&\frac{13}{9}\end{matrix}\right] \right>$$
is  generated by those three elements of order $2$ (and is in fact the free product of the groups $\left<s_0\right>$, $\left<s_1\right>$, and $\left<s_2\right>$), and its subgroup
$$W=\left<s_0s_1,s_0s_2\right>=\left<
\left[\begin{matrix}\frac{59}{81}&\frac{20}{9}\\-\frac{4}{81}&\frac{11}{9}\end{matrix}\right], 
\left[\begin{matrix}\frac{29}{81}&\frac{44}{9}\\-\frac{8}{81}&\frac{13}{9}\end{matrix}\right]\right> $$
is a Whittaker group of rank $2$. 


 The quotient $\Omega/W$ is a hyperelliptic curve of genus $2$, with six points of ramification $G(0), G(1), \ldots,G(5)$, where $G$ is the theta function for $\Gamma$ with suitably chosen $a$ and $b$.

\end{example}

\begin{question}\rm{As long as we know the $2$-torsion matrices $s_0,s_1,\ldots,s_g$ that go into making a Whittaker group, we can find the ramification points of the corresponding hyperelliptic curve.  But what if we don't have that data?

  \bi

\item If we are given $W=\left<\gamma_1,\ldots,\gamma_g\right>$, can we algorithmically determine whether or not $W$ is Whittaker? 

\item  If we know $W=\left<\gamma_1,\ldots,\gamma_g\right>$, can we algorithmically find  $s_0,s_1,\ldots,s_g$ from $\gamma_1,\ldots,\gamma_g$?  

\item If we know $W$ is Whittaker but cannot find $s_0,s_1,\ldots,s_g$, is there another way to find the ramification points of  $\Omega/W$?  

\ei
}
\end{question}

A good first family of examples to consider is Schottky groups generated by two elements.  These give rise to genus $2$ curves, which are hyperelliptic, so the groups must in fact be Whittaker.

Having discussed going from a Whittaker group to a set of ramification points, we now consider the other direction:  going from the ramification points of a totally split hyperelliptic curve and finding the corresponding Whittaker group.  This more difficult, though a brute force method was described by  Kadziela in \cite{Ka}, and was used to compute several genus $2$ examples over $\mathbb{Q}_5$.  We will outline his approach.

After a projective transformation, we may assume that the set of fixed points of the group $\Gamma$ is of the form
$$S=\{0,b_0, a_1,b_1,\ldots a_{g-1},b_{g-1}, 1,\infty\}, $$
where
$$0<|b_0|<|a_1|\leq \ldots \leq |b_{g-1}|<1,$$
and where the generators of $\Gamma$ are the $2$-torsion matrices $s_i$ with fixed points $\{a_i,b_i\}$ (taking $a_0=0$, $a_g=1$, and $b_{g}=\infty$).  Let us choose parameters for the theta function associated to $\Gamma$ as $0$ and $1$, and write
$$G(z)=\Theta(0,1;z)=\prod_{n=0}^\infty L_n(z), $$
where
$$L_n(z):=\prod_{\gamma\in \Gamma,\ell(\gamma)=n}\frac{z-\gamma(0)}{z-\gamma(1)} $$
is the sub product of $\Theta$ over all matrices in $\Gamma$ with reduced length exactly $n$.

\begin{theorem}[Kadziela's Main Approximation Theorem, \cite{Ka}]\label{theorem:kadziela}  Assume $S$ and $G$ are as above, and let $\pi$ denote the uniformizer.  Then
$$G(0)=0,G(1)=\infty, G(\infty)=1, $$
and for $z\in S-\{0,1,\infty\}$,
\bi

\item  $G(z)\equiv 0\mod \pi$

\item  $G(z)\equiv\begin{cases}
-4b_0\mod \pi^2 \text{  if $z=b_0$}, \\
-2z\mod\pi^2 \text{  if $z\neq b_0$}
\end{cases}$

\item  $G(z)\mod \pi^t-\prod_{i=0}^{t-2}L_i(z)\mod \pi^t=\prod_{i=0}^{t-2} L_i(z\mod\pi^t)$ for $t\geq 3$.

\ei
\end{theorem}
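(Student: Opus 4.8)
The plan is to stratify the theta product by reduced word length, writing $G(z)=\prod_{n\ge 0}L_n(z)$, to prove that $L_n(z)\to 1$ geometrically fast for $n\ge 2$, and then to read off all three bullet points by isolating and explicitly computing the two ``anomalous'' length-one factors. The special values are immediate: since $a=0\notin\Gamma b=\Gamma(1)$ and $\infty\notin\Gamma(0)\cup\Gamma(1)$, no factor of $G$ vanishes or has a pole at $z=\infty$ and each factor equals $1$ there, so $G(\infty)=1$; and the identity factor $L_0(z)=\frac{z}{z-1}$ vanishes at $z=0$ and has a pole at $z=1$ while the other factors are finite and nonzero there, so $G(0)=0$ and $G(1)=\infty$.

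For the decay estimate I would use the ping-pong structure of $\Gamma=\langle s_0\rangle*\cdots*\langle s_g\rangle$ --- the closed balls $B_0^+,\dots,B_g^+$ are disjoint and $s_i$ maps $\mathbb{P}^1\setminus B_i$ onto a ball strictly inside $B_i^+$ --- and, placing $\infty$ in the interior of $F=\mathbb{P}^1\setminus\bigcup_iB_i$, argue by induction on word length, exactly in the style of Propositions~\ref{statement:distance_gamma_a} and~\ref{statement:approximation_fraction}: for a reduced word $\gamma$ of length $n\ge 2$ the points $\gamma(0)$ and $\gamma(1)$ lie in a common ball of small diameter inside some $B_i^+$ (a trailing $s_0$ or $s_g$, which fix $0$ and $\infty$ respectively, only shortens the word), while every $z\in S\setminus\{0,1,\infty\}$ lies outside that ball with $|z-\gamma(1)|$ bounded below; the normalization $0<|b_0|<|a_1|\le\dots\le|b_{g-1}|<1$ is what forces the per-step contraction to be at least $p$. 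This gives $\bigl|\tfrac{z-\gamma(0)}{z-\gamma(1)}-1\bigr|\le p^{-(n+1)}$ for $\ell(\gamma)=n\ge 2$, hence $L_n(z)\equiv 1\pmod{\pi^{\,n+1}}$ for $n\ge 2$ and so $\prod_{n\ge t-1}L_n(z)\equiv 1\pmod{\pi^t}$ for $t\ge 3$; this is the first equality of the third bullet, $G(z)\equiv\prod_{i=0}^{t-2}L_i(z)\pmod{\pi^t}$. The remaining equality, that $\prod_{i=0}^{t-2}L_i$ is well defined modulo $\pi^t$ as a function of $z\bmod\pi^t$, follows since a perturbation $z\mapsto z+O(\pi^t)$ moves each M\"obius factor by $O(\pi^t)$, the valuations $v(z-\gamma(0))$ and $v(z-\gamma(1))$ being bounded for $\ell(\gamma)\le t-2$.

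It then remains to handle $L_0$ and $L_1$ to low order. One has $L_0(z)=\frac{z}{z-1}\equiv -z\pmod{\pi^2}$ with $v(L_0(z))=v(z)\ge 1$, and $\prod_{n\ge 2}L_n(z)\equiv 1\pmod{\pi^2}$ by the estimate, so everything reduces to $L_1(z)\bmod\pi$. In $L_1(z)=\prod_{i=0}^{g}\frac{z-s_i(0)}{z-s_i(1)}$ the $g-1$ ``interior'' factors with $1\le i\le g-1$ have $s_i(0),s_i(1)\in B_i^+$ with small separation and are $\equiv 1\pmod{\pi^2}$. The factor $i=g$ is anomalous because $s_g$ fixes $\infty$ and hence is affine, $s_g(x)=2-x$, so it equals $\frac{z-2}{z-1}\equiv 2\pmod\pi$. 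The factor $i=0$ is anomalous because $s_0$ fixes $0$, so it equals $\frac{z}{z-s_0(1)}$ with $s_0(1)=\frac{b_0}{2-b_0}$, $|s_0(1)|=|b_0|$: for $z\ne b_0$ one has $|z|>|b_0|$ and this is $\frac{1}{1-s_0(1)/z}\equiv 1\pmod\pi$, whereas for $z=b_0$ the identity $s_0(b_0)=b_0$ gives $\frac{b_0}{b_0-s_0(1)}=\frac{2-b_0}{1-b_0}\equiv 2\pmod\pi$. Hence $L_1(z)\equiv 2\pmod\pi$ for $z\ne b_0$ and $L_1(b_0)\equiv 4\pmod\pi$; multiplying and using that $L_0(z)$ already has valuation $\ge 1$ so only $L_1\bmod\pi$ matters, $G(z)\equiv(-z)\cdot 2\equiv -2z\pmod{\pi^2}$ and $G(b_0)\equiv(-b_0)\cdot 4\equiv -4b_0\pmod{\pi^2}$. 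The second bullet is the reduction of this modulo $\pi$.

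I expect the main obstacle to be the bookkeeping around the two anomalous factors: because $0$ and $\infty$ are themselves among the fixed points (of $s_0$ and $s_g$), the corresponding length-one factors do not tend to $1$, and one must check that \emph{exactly} these two --- and no factor of length $\ge 2$ --- escape the decay estimate, which is precisely what produces the constant $2$ and the case split $z=b_0$ versus $z\ne b_0$. The secondary technical point is calibrating the induction so that the \emph{sharp} exponent $p^{-(n+1)}$ (rather than merely $p^{-n}$) emerges --- this is what makes the $t\ge 3$ statement tight --- and here the precise normalization of $S$ is essential; alternatively one may simply import the required convergence estimate from \cite{Ka}.
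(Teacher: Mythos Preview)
The paper does not contain its own proof of this statement: Theorem~\ref{theorem:kadziela} is quoted verbatim from Kadziela's thesis \cite{Ka} and is stated without argument, serving only as the theoretical justification for Algorithm~\ref{algorithm:whittaker_group}. There is therefore no proof in the paper to compare your attempt against; a complete argument must be drawn from \cite{Ka} itself or supplied independently, as you have done.

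That said, your outline is the natural one and matches the shape of Kadziela's argument: stratify $G=\prod_n L_n$ by word length, use the ping--pong geometry of the free product $\langle s_0\rangle*\cdots*\langle s_g\rangle$ to get geometric decay of $L_n$ toward $1$, and then isolate the exceptional low-length factors. Your identification of the two anomalous $L_1$ contributions---from $s_0$ (which fixes $0$) and $s_g$ (which fixes $1$ and $\infty$, hence acts affinely as $x\mapsto 2-x$)---is exactly what produces the constants $-2z$ and $-4b_0$ and the case split at $z=b_0$. The points you yourself flag as delicate are genuinely where the work lies: obtaining the \emph{sharp} rate $L_n(z)\equiv 1\pmod{\pi^{\,n+1}}$ rather than merely $\pmod{\pi^{\,n}}$ hinges on the precise normalization $0<|b_0|<|a_1|\le\cdots\le|b_{g-1}|<1$, and the claim that the ``interior'' $L_1$ factors ($1\le i\le g-1$) are already $\equiv 1\pmod{\pi^2}$ needs a careful check when $z\in\{a_i,b_i\}$ lies in the same ball $B_i^+$ as $s_i(0),s_i(1)$. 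One also tacitly needs $p\neq 2$ for identities like $|2-b_0|=1$ and for $-2z$, $-4b_0$ to carry information modulo $\pi^2$. These are precisely the details that \cite[\S6]{Ka} supplies.
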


Let $X$ be a totally split hyperelliptic curve of genus $g$, which after projective transformation we may assume has its set of ramification points in the form
$$R=\{0,r_0,\ldots,r_{2g-2},1,\infty\} $$
where $0<|r_0|<|r_1|\leq\ldotsÉ\leq |r_{2g-2}|<1$.  We know $X\cong \Omega/W$ for some Whittaker group $W$.  To find $W$ it will suffice to find the fixed points $S$ of the corresponding group $\Gamma$, so given $R$ we wish to find $S$.  We know $S=\Theta^{-1}(R)$, but $\Theta$ is defined by $S$, and we cannot immediately invert a function we do not yet know.  This means we must gradually approximate candidates for both $S$ and $\Theta$ that give the desired property that $\Theta(S)=R$.  To simplify notation, we will sometimes write $S=\{0,x_0,x_1,\ldots,x_{2g-2},1,\infty\}$ instead of in terms of $a_i$'s and $b_i$'s.  

The following algorithm follows the description in \cite[\textsection 6]{Ka}.  Although we have not implemented it, Kadziela used a Magma implementation of it to compute several genus $2$ examples over $\mathbb{Q}_5$.

\begin{algorithm}[From Ramification Points to Whittaker Group]\label{algorithm:whittaker_group}\quad{}
{\rm
\begin{algorithmic}[1]
\Require Set of ramification points $R=\{0,r_0,\ldots,r_{2g-2},1,\infty\}\subset\Q_p\cup\{\infty\}$, and desired degree of precision $d\geq 3$  
\Ensure The set of fixed points $S=\{x_0,\ldots,x_{2g-2}, 1,\infty\}$ of $\Gamma$, approximated $\mod \pi^d$, such that $\Omega/W$ has ramification points $R$ for the corresponding Whittaker group of $\Gamma$.
\State Sort  $r_0,\ldots,r_{2g-1}$ in increasing absolute value and rename.
\If {$|r_0|=|r_1|$}
\State \Return ``NOT VALID''
\EndIf
\State Define $x_i=0$ for $0\leq i\leq 2g-2$.  (Approximation $\mod\pi$.)
\State Let $m=\max\{k\,|\,r_k\equiv 0\mod \pi^2\}$.
\State Set $\ell=0$ and GOOD=FALSE
\While {GOOD=FALSE}
\State Set $x_0=-\frac{1}{4}r_i\mod\pi^2$, and all other $x_j$'s to the $-\frac{1}{2}r_k\mod\pi^2$.
\State Test if $i$ is the right choice using Theorem \ref{theorem:kadziela}; if it is, set GOOD=TRUE
\State Set $\ell=\ell+1$.
\EndWhile
\For {$3\leq t\leq d$}
\State Set DONE=FALSE.
\While {DONE=FALSE}
\State Choose $\overline{v}\in (\mco_K/\mfm\mco_K)^{2g-1}$, set $\overline{x}=\left(\overline{x}\mod\pi^{t-1}\right)+\overline{v}\pi^t$.
\State Compute $\prod_{n=0}^{t-2}L_n(x_i)$ for $0\leq i\leq 2g-2$.
\If {this set equals $\{r_0\mod\pi^t,\ldots r_{2g-1}\mod\pi^t\}$}
\State Set DONE=TRUE.
\Else
\State Set DONE=FALSE.
\EndIf
\EndWhile
\EndFor
\State \Return $x_0,\ldots x_{2g-1}$.
\end{algorithmic}
}
\end{algorithm}

This algorithm is in some sense a brute force algorithm,  as for each digit's place from $3^{rd}$ to $d^{th}$ it might in principal try every element of $(\mco_K/\mfm\mco_K)^{2g-1}$, essentially guessing the $x_i$'s digit by digit (lines 13 through 24). It is nontrivial that such a brute force method could even work, but this is made possible by Theorem \ref{theorem:kadziela} as it tells us how to check whether a choice of element in $(\mco_K/\mfm\mco_K)^{2g-1}$ is valid $\mod\pi^m$.  As with the other algorithms presented in this paper, future algorithms improving the efficiency would be greatly desirable.

\end{document}